\documentclass[9.8pt,letterpaper]{amsart}
\usepackage[utf8]{inputenc}
\usepackage{amsmath}
\usepackage{amsfonts}
\usepackage{amssymb}
\usepackage{amsthm}
\usepackage{chngcntr}
\usepackage{mathtools}
\usepackage[all,cmtip]{xy}
\usepackage{tikz}
\usepackage[pagebackref=false,colorlinks]{hyperref}
\hypersetup{pdffitwindow=true,linkcolor=blue,citecolor=blue,urlcolor=cyan}
\usepackage{cite}
\usepackage{fancyhdr}
\usepackage{stmaryrd}
\usepackage{yfonts}

\font\wncyr=wncyr9.8
\newcommand{\sha}{\text{\wncyr{W}}}

\usepackage[top=2.6cm, bottom=2.4cm, left=2.4cm, right=2.4cm]{geometry}

\usepackage{tikz}
\usepackage{tikz-cd}

\usepackage{hyperref}

\newtheorem{theorem}{Theorem}
\newtheorem{lemma}[theorem]{Lemma}
\newtheorem{prop}[theorem]{Proposition}
\newtheorem{cor}[theorem]{Corollary}

\newtheorem{notation}[theorem]{Notation}
\newtheorem{ques}[theorem]{Question}

\theoremstyle{definition}
\newtheorem{definition}[theorem]{Definition}
\newtheorem{remark}[theorem]{Remark}


\newtheorem{theoremintro}{Theorem}


\setcounter{tocdepth}{1}

\renewcommand{\thetheorem}{\arabic{section}.\arabic{subsection}.\arabic{theorem}}

\makeatletter
\@addtoreset{theorem}{section}
\makeatother

\begin{document}

\title{Structure of the Mordell-Weil group over the $\mathbb{Z}_p$-extensions}

\author{JAEHOON LEE}
\maketitle

\begin{abstract} We study the $\Lambda$-module structure of the Mordell-Weil, Selmer, and Tate-Shafarevich groups of an abelian variety over $\mathbb{Z}_p$-extensions.
\end{abstract}

\tableofcontents

\smallskip

\section{Introduction}

\subsection{Overview and the questions}

The Iwasawa theory of abelian varieties (elliptic curves) was initiated by Mazur in his seminal paper \cite{mazur1972rational}, where he proved that the $p^{\infty}$-Selmer groups of an abelian variety $A$ defined over a number field $K$ are ``well-controlled" over $\mathbb{Z}_p$-extensions if $A$ satisfies certain reduction condition at places dividing $p$. More precisely, let $K_{\infty}$ be a $\mathbb{Z}_p$-extension of $K$ and $K_n$ be the $n$-th layer, and $\mathrm{Sel}_{K_n}(A)_p$ be the classical $p^{\infty}$-Selmer group over $K_n$ attached to $A$. For a natural restriction map
$$S_{n}^{A}:\mathrm{Sel}_{K_n}(A)_p \rightarrow \mathrm{Sel}_{K_{\infty}}(A)_p[\omega_n],$$ we have the following celebrated theorem.
\\

\begin{theorem}[Control Theorem] 
If either 
\begin{itemize}
\item $(\mathrm{Mazur}$, \cite{mazur1972rational}$)$ $A$ has a good ordinary reduction at all places of $K$ dividing $p$
\end{itemize}
or
\begin{itemize}
\item $(\mathrm{Greenberg}$, \cite[Proposition 3.7]{greenberg1999iwasawa}$)$ $K=\mathbb{Q}$ and $A$ is an elliptic curve having multiplicative reduction at $p$
\end{itemize}

then $\mathrm{Coker}(S_{n}^{A})$ is finite and bounded independent of $n$.
\end{theorem}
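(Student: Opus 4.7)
The natural approach is to compare the two Selmer groups through their defining exact sequences. I would set up the commutative diagram
\begin{equation*}
\begin{array}{ccccccc}
0 \to & \mathrm{Sel}_{K_n}(A)_p & \to & H^1(K_n, A[p^\infty]) & \to & \prod_v J_v(K_n) \\
& \big\downarrow S_n^A & & \big\downarrow s_n & & \big\downarrow g_n = \prod_v g_{n,v} \\
0 \to & \mathrm{Sel}_{K_\infty}(A)_p[\omega_n] & \to & H^1(K_\infty, A[p^\infty])^{\Gamma_n} & \to & \big(\prod_v J_v(K_\infty)\big)^{\Gamma_n}
\end{array}
\end{equation*}
where $\Gamma_n = \mathrm{Gal}(K_\infty/K_n)$ and $J_v$ denotes the local singular quotient in the standard Selmer definition. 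The snake lemma then gives an exact sequence
\[
\mathrm{Ker}(s_n) \longrightarrow \mathrm{Ker}(S_n^A) \longrightarrow 0, \qquad \mathrm{Coker}(S_n^A) \hookrightarrow \mathrm{Coker}(s_n) \oplus \mathrm{Ker}(g_n)\big/\mathrm{Im}(\cdots),
\]
so the task reduces to bounding $\mathrm{Coker}(s_n)$ and $\mathrm{Ker}(g_n)$ uniformly in $n$.

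The global term $\mathrm{Coker}(s_n)$ is handled by Hochschild--Serre: it embeds into $H^2(\Gamma_n, A(K_\infty)[p^\infty])$, and since $\Gamma_n$ has $p$-cohomological dimension one this group vanishes, while $\mathrm{Ker}(s_n) = H^1(\Gamma_n, A(K_\infty)[p^\infty])$ is finite and bounded because $A(K_\infty)[p^\infty]$ is a cofinitely generated $\mathbb{Z}_p$-module and in fact finite by a standard argument (using Weil-type estimates on the torsion in a $\mathbb{Z}_p$-extension). Thus all global pieces are under control independent of $n$.

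The real heart of the argument is the local analysis, i.e.\ showing each $\mathrm{Ker}(g_{n,v})$ is finite and $g_{n,v}$ is even an isomorphism at all but finitely many $v$ so that the product $\mathrm{Ker}(g_n)$ stays bounded. For $v \nmid p$ one uses that the extension $K_{\infty,w}/K_{n,w}$ is unramified (or trivial) at such places, combined with the inflation--restriction sequence and the fact that the unramified cohomology of $A$ vanishes at places of good reduction, reducing the local kernel at each ramified prime $v \nmid p$ to a term controlled by $A(k_w)[p^\infty]$ for the residue field $k_w$, which is finite. The hard part, and the step where the two hypotheses enter, is $v \mid p$: in the good ordinary case one exploits the filtration $0 \to \widehat{A}[p^\infty] \to A[p^\infty] \to \overline{A}[p^\infty] \to 0$ coming from the formal group and the connected-étale sequence, and Mazur's argument shows $\mathrm{Ker}(g_{n,v})$ is controlled by the finite groups $\widehat{A}(\mathfrak{m}_{K_{\infty,w}})[p^\infty]$ and a cohomology term that stabilizes; in Greenberg's multiplicative case over $\mathbb{Q}$, one replaces the formal group filtration by the Tate uniformization, using that $A$ is isogenous to $\mathbb{G}_m/q^{\mathbb{Z}}$ locally, and the same kind of bound results.

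The main obstacle I would expect is precisely this local analysis at $v \mid p$: one must carefully identify the local Selmer conditions in terms of the formal group (or Tate parametrization) and verify that the kernel of the restriction map is governed by finitely generated $\mathbb{Z}_p$-modules whose $\Gamma_n$-invariants stabilize. Once these uniform local bounds are in hand, feeding them back into the snake-lemma output yields $\#\mathrm{Coker}(S_n^A) \leq C$ for a constant $C$ independent of $n$, completing the theorem.
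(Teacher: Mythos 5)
The paper itself contains no proof of this statement: Theorem 1.1.1 is quoted in the introduction as a known result, with the argument deferred to Mazur \cite{mazur1972rational} and to Greenberg \cite[Proposition 3.7]{greenberg1999iwasawa}, so there is no in-paper proof to compare yours against; the comparison has to be with the standard published argument, and your outline reproduces exactly that strategy (fundamental diagram, snake lemma, Hochschild--Serre for the global map, separate local analysis for $v \nmid p$ and $v \mid p$). Within that outline, note three points. First, the snake lemma gives an injection $\mathrm{Ker}(S_n^A) \hookrightarrow \mathrm{Ker}(s_n)$ and an exact sequence of the shape $\mathrm{Ker}(g_n) \to \mathrm{Coker}(S_n^A) \to \mathrm{Coker}(s_n)$ (after replacing the local product by the image of the global cohomology so the top row is exact), not the surjection $\mathrm{Ker}(s_n) \to \mathrm{Ker}(S_n^A) \to 0$ you wrote; this is harmless for the theorem, which only concerns the cokernel. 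Second, for $v \nmid p$ there are no ramified primes at all: a $\mathbb{Z}_p$-extension of a number field is unramified outside $p$, so the finitely many places contributing a nonzero local kernel are the primes of bad reduction (where $\mathrm{Ker}(g_{n,v})$ is finite and bounded, essentially by the Tamagawa factor), while at good-reduction primes the unramified cohomology vanishes by Lang's theorem, as you indicate. Third, and most importantly, the entire content of the theorem --- and the only step where the ordinary or multiplicative hypothesis is used --- is the uniform bound on $\mathrm{Ker}(g_{n,v})$ for $v \mid p$; your write-up invokes ``Mazur's argument'' and Tate uniformization there rather than carrying out the computation (identifying the local condition via the formal-group/connected--\'etale filtration, reducing to terms such as $H^1\bigl(\Gamma_{n,w}, \widehat{A}(\mathfrak{m}_{\infty,w}) \otimes \mathbb{Q}_p/\mathbb{Z}_p\bigr)$ and bounding them using ordinarity and local duality, resp.\ the Tate parametrization in the multiplicative case). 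So what you have is a correct and well-organized roadmap of the Mazur--Greenberg proof, but not a self-contained argument: the decisive local-at-$p$ estimate is asserted by citation rather than proved.
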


Here the word ``control" means that the Selmer group $\mathrm{Sel}_{K_n}(A)_p$ at each layer $K_n$ can be described by the \emph{one} object. Hence we can expect that the each Selmer group over $K_n$ should behave in a certain regular way governed by the ``limit" Selmer group $\mathrm{Sel}_{K_{\infty}}(A)_p$. For instance, we have the following consequence of the above theorem.\\

\begin{prop} Assume either one of the condition of Theorem 1.1.1 and also assume that both $A(K_n)$ and $\sha_{K_n}^{1}(A)_p$ are finite for all $n$. Then there exists $\mu, \lambda, \nu$ such that $$| \mathrm{Sel}_{K_n}(A)_p |=|\sha_{K_n}^{1}(A)_p|=p^{e_n} \quad (n>>0)$$ where $$e_n=p^n \mu + n \lambda + \nu.$$
\end{prop}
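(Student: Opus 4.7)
The argument falls into four steps.

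\emph{Step 1: from Selmer to Sha.} The tautological short exact sequence
$$0 \longrightarrow A(K_n)\otimes_{\mathbb{Z}} \mathbb{Q}_p/\mathbb{Z}_p \longrightarrow \mathrm{Sel}_{K_n}(A)_p \longrightarrow \sha_{K_n}^{1}(A)_p \longrightarrow 0$$
has trivial left term because $A(K_n)$ is finite. This gives $\mathrm{Sel}_{K_n}(A)_p \cong \sha_{K_n}^{1}(A)_p$ and hence the first equality.

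\emph{Step 2: transport the count to a $\Lambda$-module.} Set $X := \mathrm{Sel}_{K_\infty}(A)_p^{\vee}$, a finitely generated module over $\Lambda := \mathbb{Z}_p[[\Gamma]] \cong \mathbb{Z}_p[[T]]$, and recall the Pontryagin-duality identity $(\mathrm{Sel}_{K_\infty}(A)_p[\omega_n])^{\vee} \cong X/\omega_n X$. Theorem 1.1.1 bounds $|\mathrm{Coker}(S_n^A)|$ uniformly in $n$. A standard inflation–restriction argument, using that $A(K_\infty)[p^\infty]$ is cofinitely generated as a $\mathbb{Z}_p$-module, also shows $\ker(S_n^A)$ is finite of order bounded in $n$. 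Consequently
$$|\mathrm{Sel}_{K_n}(A)_p| = |X/\omega_n X|\cdot c_n, \qquad c_n = O(1).$$

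\emph{Step 3: $X$ is $\Lambda$-torsion.} Each $\mathrm{Sel}_{K_n}(A)_p$ is finite by hypothesis, so Step~2 forces $X/\omega_n X$ to be finite for every $n$. In particular $X/TX$ is finite, which rules out any free $\Lambda$-summand in $X$. Thus $X$ has $\Lambda$-rank zero, i.e. $X$ is a torsion $\Lambda$-module.

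\emph{Step 4: Iwasawa growth formula.} Invoke Iwasawa's classical theorem: for a finitely generated torsion $\Lambda$-module $X$ with $X/\omega_n X$ finite for all $n$,
$$|X/\omega_n X| = p^{\mu(X)\, p^n + \lambda(X)\, n + \nu_0} \qquad (n \gg 0),$$
with $\mu(X)$ and $\lambda(X)$ the Iwasawa invariants read off the structure theorem for finitely generated $\Lambda$-modules. Substituting this into Step~2 and absorbing the bounded factor $c_n$ into the $\nu$-term yields $|\mathrm{Sel}_{K_n}(A)_p| = p^{e_n}$ with $e_n = p^n\mu + n\lambda + \nu$ for all sufficiently large $n$.

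\textbf{Main obstacle.} The only input beyond Theorem 1.1.1 that requires care is the uniform boundedness of $\ker(S_n^A)$: the stated control theorem controls only the cokernel, so one needs to combine cofinite generation of $A(K_\infty)[p^\infty]$ over $\mathbb{Z}_p$ with the Hochschild–Serre sequence to bound $H^1(\Gamma/\Gamma_n, A(K_\infty)[p^\infty])$ independently of $n$. Once this is in place, recognizing $X$ as $\Lambda$-torsion and invoking Iwasawa's growth theorem are routine.
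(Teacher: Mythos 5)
The paper gives no proof of this proposition at all (it cites Greenberg's Corollary 4.11), and your Steps 1--3 follow exactly the standard route that citation takes: the finiteness of $A(K_n)$ kills $A(K_n)\otimes\mathbb{Q}_p/\mathbb{Z}_p$ so $\mathrm{Sel}_{K_n}(A)_p\simeq\sha^1_{K_n}(A)_p$; the control theorem plus the boundedness of $\mathrm{Ker}(S_n^A)$ (which is indeed available --- it is Lemma 2.0.1(3) of the paper, proved by essentially the inflation--restriction observation you sketch, since $\mathrm{Ker}(S_n^A)$ injects into $\frac{A(K_\infty)[p^\infty]}{\omega_n A(K_\infty)[p^\infty]}$) transfers the count to $X/\omega_n X$ with $X=\mathrm{Sel}_{K_\infty}(A)_p^{\vee}$; finiteness of $X/\omega_n X$ forces $X$ to be $\Lambda$-torsion; and the growth formula for torsion $\Lambda$-modules with finite $\omega_n$-coinvariants applies. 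All of that is correct.

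The gap is in your final sentence. From $|\mathrm{Sel}_{K_n}(A)_p| = |X/\omega_n X|\cdot c_n$ with $c_n=|\mathrm{Ker}(S_n^A)|/|\mathrm{Coker}(S_n^A)|$ merely \emph{bounded}, you can conclude $e_n = p^n\mu + n\lambda + O(1)$, but not the statement being proved, which asserts a single constant $\nu$ valid for all $n\gg 0$: a bounded sequence of integers need not be eventually constant, so ``absorbing the bounded factor $c_n$ into the $\nu$-term'' is not a legitimate step. To obtain the exact formula you must show that $|\mathrm{Ker}(S_n^A)|$ and $|\mathrm{Coker}(S_n^A)|$ (or at least their ratio) stabilize for $n\gg 0$, and this is where the substantive work in the cited proof lies: one analyzes the kernel through $H^1(\mathrm{Gal}(K_\infty/K_n), A(K_\infty)[p^\infty])$ and the cokernel through the explicit local terms appearing in the proof of the control theorem, and shows these groups become constant up the tower, rather than just bounded. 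As written, your proposal proves only the weaker $O(1)$ version of the growth formula.
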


For the proof, see \cite[Corollary 4.11]{greenberg2001introduction}. The value $p^n \mu +n \lambda +\nu$ in the Proposition 1.1.2 naturally appears from the structure theory of $\Lambda$-modules. For a finitely generated $\Lambda$-module $M$, there is a $\Lambda$-linear map $$\displaystyle M \rightarrow \Lambda^{r}\oplus \left(\bigoplus_{i=1}^{n}\frac{\Lambda}{g_{i}^{e_i}}\right)\oplus \left( \bigoplus_{j=1}^{m}\frac{\Lambda}{p^{f_j}} \right)$$ with finite kernel and cokernel where $r, n, m \geq 0$, $e_1, \cdots, e_n, f_1, \cdots, f_m$ are positive integers, and $g_1, \cdots, g_n$ are distinguished irreducible polynomial of $\Lambda$. The quantities $r, e_1, \cdots , e_n, f_1, \cdots , f_m, g_1, \cdots , g_n$ are uniquely determined, and we call $$\displaystyle E(M):=\Lambda^{r}\oplus \left(\bigoplus_{i=1}^{n}\frac{\Lambda}{g_{i}^{e_i}}\right)\oplus \left( \bigoplus_{j=1}^{m}\frac{\Lambda}{p^{f_j}} \right)$$ as an \emph{elementary module of $M$} following \cite[Page 292]{neukirch2000cohomology}. The $\lambda$-invariant $\lambda(M)$ is defined as $\displaystyle \sum_{i=1}^{n}e_i \cdot \mathrm{deg}g_i$ and the $\mu$-invariant $\mu(M)$ is defined as $f_1+...+f_m$. \\

In Proposition 1.1.2, the constants $\lambda$ and $\mu$ are indeed the $\lambda$ and $\mu$-invariants of the module $\mathrm{Sel}_{K_{\infty}}(A)_p^{\vee}$, respectively and hence we can say that the module $E\left(  \mathrm{Sel}_{K_{\infty}}(A)_p^{\vee} \right)$ gives the information about the arithmetic of the $A$ at each finite level at once. Hence it is quite natural to ask questions about the \emph{shape} of the $\Lambda$-module $E\left(\mathrm{Sel}_{K_{\infty}}(A)_p^{\vee}\right)$. We can also consider the same question for the Mordell-Weil group and the Tate-Shafarevich group, which fit into a natural exact sequence $$0 \rightarrow A(K_{\infty})\otimes \mathbb{Q}_p/\mathbb{Z}_p \rightarrow \mathrm{Sel}_{K_{\infty}}(A)_p \rightarrow \sha_{K_{\infty}}^{1}(A)_p \rightarrow 0.$$ 

\medskip

The main goal of this paper is studying $$E\left( (A(K_{\infty}) \otimes_{\mathbb{Z}_p} \mathbb{Q}_p/\mathbb{Z}_p)^{\vee} \right), \quad E\left( \mathrm{Sel}_{K_{\infty}}(A)_p^{\vee} \right), \quad E\left( \sha_{K_{\infty}}^{1}(A)_p^{\vee}\right).$$ More precisely, we can ask the following questions:

\medskip

\begin{ques}[Describing the elementary modules]  \mbox{ }
\begin{itemize}
\item Can we describe the structure of $E\left( (A(K_{\infty}) \otimes_{\mathbb{Z}_p} \mathbb{Q}_p/\mathbb{Z}_p)^{\vee} \right), E\left( \mathrm{Sel}_{K_{\infty}}(A)_p^{\vee} \right), E\left( \sha_{K_{\infty}}^{1}(A)_p^{\vee}\right)?$ Or can we find some relations among these three modules? 
\end{itemize}
\end{ques}

\smallskip

\begin{ques}[``Smallness" of the $\sha^1$] Under the finiteness assumption of the groups $\sha_{K_n}^{1}(A)_p$, 
\begin{itemize}
\item Is the group $\sha_{K_{\infty}}^{1}(A)_p$ $\Lambda$-cotorsion?
\smallskip
\item Can we find an estimate (even conjecturally) of $|\sha_{K_n}^{1}(A)_p|$ in terms of $n$?
\end{itemize}
\end{ques}

\smallskip

\begin{ques}[Algebraic Functional Equation] If $A^t$ is the dual abelian variety of $A$, then can we find any relation between \begin{itemize}
\item $E\left( \mathrm{Sel}_{K_{\infty}}(A)_p^{\vee} \right) \quad \mathrm{and} \quad E\left( \mathrm{Sel}_{K_{\infty}}(A^t)_p^{\vee} \right)?$
\item $E\left( \sha_{K_{\infty}}^{1}(A)_p^{\vee}\right) \quad \mathrm{and} \quad E\left(\sha_{K_{\infty}}^{1}(A^t)_p^{\vee}\right)?$
\item $E\left( (A(K_{\infty}) \otimes_{\mathbb{Z}_p} \mathbb{Q}_p/\mathbb{Z}_p)^{\vee} \right) \quad \mathrm{and} \quad E\left( (A^t(K_{\infty}) \otimes_{\mathbb{Z}_p} \mathbb{Q}_p/\mathbb{Z}_p)^{\vee}\right)?$
\end{itemize}

\end{ques}

\smallskip

We will try to answer these questions in this paper. For the $\Lambda$-corank of the module $\mathrm{Sel}_{K_{\infty}}(A)_p$, we have the following famous theorem.

\smallskip

\begin{theorem}[Kato-Rohrlich] If $A$ is an elliptic curve defined over $\mathbb{Q}$ with good ordinary reduction or multiplicative reduction at $p$, $K$ is an abelian extension of $\mathbb{Q}$ and $K_{\infty}$ is a cyclotomic $\mathbb{Z}_p$-extension of $K$, then the $\mathrm{Sel}_{K_{\infty}}(A)_p$ is a cotorsion $\Lambda$-module.
\end{theorem}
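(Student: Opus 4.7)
The plan is to combine Kato's Euler system for modular forms with Rohrlich's non-vanishing theorem for twisted $L$-values. First I would reduce the problem from the abelian base $K$ to the rational field $\mathbb{Q}$. Since $K/\mathbb{Q}$ is abelian, $K$ embeds into a cyclotomic field, and $\Delta := \mathrm{Gal}(K_\infty/\mathbb{Q}_\infty)$ is a finite abelian group. Decomposing the group algebra $\mathbb{Z}_p[\Delta]$ into idempotents indexed by characters $\chi$ of $\Delta$, the Selmer group $\mathrm{Sel}_{K_\infty}(A)_p$ splits, up to $\Lambda$-pseudo-isomorphism, into a direct sum of $\chi$-isotypic pieces, each identified with the Selmer group of the $\chi$-twisted Galois representation $V_p(A) \otimes \chi$ over $\mathbb{Q}_\infty$. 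Since cotorsion is preserved under such a finite decomposition and under extension of scalars from $\Lambda$ to $\mathcal{O}_\chi[[T]]$, it suffices to prove that each $\chi$-twisted Selmer group of $A$ over $\mathbb{Q}_\infty$ is cotorsion.

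Next I would invoke Kato's main theorem. Using Beilinson--Kato elements built from Siegel units on modular curves, Kato produces an Euler system for the $p$-adic Tate module of $A$. A Kolyvagin--Rubin-style argument then shows that the characteristic ideal of the Pontryagin dual of the $\chi$-twisted Selmer group over $\mathbb{Q}_\infty$ divides the Mazur--Swinnerton-Dyer $p$-adic $L$-function $L_p(A, \chi)$, up to a bounded power of $p$, inside the appropriate Iwasawa algebra. In particular, the twisted Selmer group is cotorsion as soon as $L_p(A, \chi) \neq 0$.

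The required non-vanishing of $L_p(A, \chi)$ is furnished by Rohrlich's theorem: for any Dirichlet character $\chi$, the complex special values $L(A, \chi\psi, 1)$ are non-zero for all but finitely many characters $\psi$ of $p$-power conductor. Via the interpolation property defining $L_p(A, \chi)$, the existence of infinitely many non-vanishing specializations forces $L_p(A, \chi) \neq 0$, completing the argument. The main obstacle is of course Kato's Euler system bound itself, which rests on deep input from the $K$-theory and étale cohomology of modular curves, the norm-compatibility of Siegel units, and the explicit reciprocity law identifying Beilinson--Kato classes with $p$-adic $L$-values. The multiplicative reduction case requires only a mild adaptation: the ordinary filtration of $T_p(A)$ at $p$ is supplied by the Tate uniformization, so the local Selmer conditions at $p$ remain tractable and Kato's argument carries over essentially unchanged.
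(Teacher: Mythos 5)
The paper does not prove this statement; it is quoted as the known Kato--Rohrlich theorem, and your sketch reproduces exactly the intended argument behind that citation: reduction to twisted Selmer groups over $\mathbb{Q}_\infty$, Kato's Euler-system divisibility toward the Main Conjecture, and Rohrlich's non-vanishing of $L(A,\chi\psi,1)$ for almost all $p$-power-conductor $\psi$ to force $L_p(A,\chi)\neq 0$. So your proposal is correct in outline and takes essentially the same (standard) route the paper is implicitly relying on.
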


\medskip

\begin{remark} The cotorsionness of $\mathrm{Sel}_{K_{\infty}}(A)_p$ is known \emph{only} (at this moment) under the assumptions of Theorem 1.1.6. 
\end{remark}

\medskip

\begin{remark}
The \emph{main novelty} of this paper are two folds:

\begin{itemize}
\item First of all, instead of the characteristic ideals of the modules above (which are usually studied because of its connection with the Iwasawa Main Conjecture), we study their $\Lambda$-module \emph{structure}. Here the word ``structure" means that we study the elementary modules of $$(A(K_{\infty}) \otimes_{\mathbb{Z}_p} \mathbb{Q}_p/\mathbb{Z}_p)^{\vee} , \mathrm{Sel}_{K_{\infty}}(A)_p^{\vee}, \sha_{K_{\infty}}^{1}(A)_p^{\vee}.$$ 
\medskip
\item Another point is that we do \emph{not} assume the \emph{$\Lambda$-cotorsionness} of the Selmer group. We solely assume the control of the Selmer group and study the three $\Lambda$-modules $(A(K_{\infty}) \otimes_{\mathbb{Z}_p} \mathbb{Q}_p/\mathbb{Z}_p)^{\vee} , \mathrm{Sel}_{K_{\infty}}(A)_p^{\vee}, \sha_{K_{\infty}}^{1}(A)_p^{\vee}.$ Moreover, our technique using the functors $\mathfrak{F}$ and $\mathfrak{G}$ (which will be introduced in Appendix) gives another proof of the known algebraic functional equation results. See 1.3 for the comparison with former works.
\end{itemize}
\end{remark}

\smallskip

We hope that our results enable us to determine the structure of the various arithmetic groups (e.g. Mordell-Weil, Selmer and $\sha^{1}$) as modules over the group ring at each finite layer $K_n$.

\medskip

\subsection{Main theorems and consequences}

\medskip

\begin{theoremintro}[Theorem 2.1.2] We have a $\Lambda$-linear injection $$\displaystyle (A(K_{\infty}) \otimes_{\mathbb{Z}_p} \mathbb{Q}_p/\mathbb{Z}_p)^{\vee} \hookrightarrow \Lambda^{r} 
 \oplus \left( \bigoplus_{n=1}^{t} \frac{\Lambda}{\omega_{b_{n}+1, b_n}} \right) $$ with finite cokernel for some integers $r, b_1, \cdots b_n$ where $\omega_{n+1, n}:=\frac{(1+T)^{p^{n+1}}-1}{(1+T)^{p^n}-1}$.
\end{theoremintro}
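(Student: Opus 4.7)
The plan is to identify $X := (A(K_\infty)\otimes_{\mathbb{Z}_p}\mathbb{Q}_p/\mathbb{Z}_p)^\vee$ explicitly and then read off the elementary module from the eigenspace decomposition of $F\otimes\mathbb{Q}_p$ under $\Gamma$. Set $F := A(K_\infty)/A(K_\infty)_{\mathrm{tors}}$ and let $F_n$ be the saturation in $F$ of the image of $A(K_n)$, each a finitely generated free $\mathbb{Z}$-module. Since $A(K_\infty)_{\mathrm{tors}}\otimes\mathbb{Q}_p/\mathbb{Z}_p=0$ (torsion against divisible vanishes), one has
$$X=\mathrm{Hom}_{\mathbb{Z}}(F,\mathbb{Z}_p)=\varprojlim_n F_n^*,\qquad F_n^*:=\mathrm{Hom}_{\mathbb{Z}}(F_n,\mathbb{Z}_p),$$
realizing $X$ as an inverse limit of $\mathbb{Z}_p$-free modules, hence $\mathbb{Z}_p$-torsion-free. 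Consequently the kernel of the structure map $X\to E(X)$ is finite and $\mathbb{Z}_p$-torsion-free, hence zero, and no $\Lambda/p^f$ summand can appear in $E(X)$ because its infinite $\mathbb{Z}_p$-torsion cannot be absorbed by a finite cokernel. Thus $\mu(X)=0$ and $X\hookrightarrow E(X)$ injectively.

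Next, decompose $V:=F\otimes\mathbb{Q}_p$ as a $\mathbb{Q}_p[[T]]$-module. Every element lies in some $V_n:=F_n\otimes\mathbb{Q}_p$, which is killed by $\omega_n$; the factorization $\mathbb{Q}_p[T]/(\omega_n)=\prod_{k=0}^{n}\mathbb{Q}_p(\zeta_{p^k})$ (Chinese remainder over $\mathbb{Q}_p$) splits $V_n=\bigoplus_{k=0}^{n}V_{k,n}$ into $\omega_{k,k-1}$-isotypic summands of $\mathbb{Q}_p(\zeta_{p^k})$-dimension $a_{k,n}$. Lifting to the integral lattice $F_n\otimes\mathbb{Z}_p$ and projecting the group algebra $\mathbb{Z}_p[\mathrm{Gal}(K_n/K)]=\Lambda/(\omega_n)$ to its maximal order $\prod_k\Lambda/(\omega_{k,k-1})=\prod_k\mathbb{Z}_p[\zeta_{p^k}]$ yields a $\Lambda$-pseudo-isomorphism $F_n\otimes\mathbb{Z}_p\sim\bigoplus_k(\Lambda/\omega_{k,k-1})^{a_{k,n}}$, each factor being free because $\mathbb{Z}_p[\zeta_{p^k}]$ is a DVR. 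Dualizing, and using $\mathrm{Hom}_{\mathbb{Z}_p}(\Lambda/\omega_{k,k-1},\mathbb{Z}_p)\cong\Lambda/\omega_{k,k-1}$ as $\Lambda$-modules (the involution $\gamma\mapsto\gamma^{-1}$ stabilizes each ideal $(\omega_{k,k-1})$), produces $F_n^*\sim\bigoplus_k(\Lambda/\omega_{k,k-1})^{a_{k,n}}$.

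Finally, pass to the inverse limit. The non-decreasing sequences $\{a_{k,n}\}_n$ have limits $a_k\in\mathbb{Z}_{\ge 0}\cup\{\infty\}$, and the finite generation of $X$ over $\Lambda$ (ensured by the control of Selmer plus Nakayama) heavily constrains the $a_k$. Stable finite-level contributions — those eigenvectors which first appear at some level $b_n$ and give rise to no further growth — assemble into the direct summands $\Lambda/\omega_{b_n+1,b_n}$, while towers of compatible new eigenvectors persisting to infinity bundle into $r$ copies of $\Lambda$ via the identification $\Lambda=\varprojlim_n\Lambda/(\omega_n)$. The hard part will be controlling the finite kernels and cokernels of the level-$n$ pseudo-isomorphisms uniformly in $n$ — so that the inverse limit still has finite cokernel — and verifying the exponent-one statement (no $\Lambda/\omega_{k,k-1}^e$ with $e\ge 2$); the latter is forced by the semisimplicity of each $\mathbb{Q}_p$-isotypic piece, which forbids nilpotent $\omega_{k,k-1}$-action on an integral lattice.
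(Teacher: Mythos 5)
Your first paragraph is fine, but it only establishes the easy part of the statement, which the paper disposes of in one line: $X:=(A(K_\infty)\otimes_{\mathbb{Z}_p}\mathbb{Q}_p/\mathbb{Z}_p)^\vee$ is $\mathbb{Z}_p$-torsion-free, hence has no finite $\Lambda$-submodule, so $X\hookrightarrow E(X)$ and no $\Lambda/p^f$ summand occurs. The actual content of Theorem 2.1.2 is that the torsion part of $E(X)$ contains no summand $\Lambda/g^e$ with $g$ coprime to all $\omega_n$ (e.g.\ $\Lambda/(T-p)$, $\Lambda/(T^5+p)$) and no summand $\Lambda/\omega_{m+1,m}^{e}$ with $e\ge 2$, and your argument does not reach it. The finite-level data you propose to use --- the isotypic multiplicities $a_{k,n}$ of the lattices $F_n\otimes\mathbb{Z}_p$ over $\Lambda/\omega_n$, up to level-by-level pseudo-isomorphism --- genuinely underdetermines $E(X)$: a summand $\Lambda/(T-p)$ of $X$ has \emph{finite} $\omega_n$-coinvariants for every $n$, so it is invisible in every $F_n^*$ once you allow finite errors, and $\Lambda/\omega_{m+1,m}^2$ has the same finite-level lattice quotients, up to finite error, as $\Lambda/\omega_{m+1,m}$ (both contribute $1$, not $2$, to $a_{m+1,n}$). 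So ``controlling the kernels and cokernels uniformly in $n$'' is not a deferred technicality --- it is equivalent to the theorem, and it cannot follow from the inputs you have put on the table. Likewise the closing claim that exponent $\ge 2$ is ``forced by semisimplicity of each $\mathbb{Q}_p$-isotypic piece'' is not an argument about $X$: the limit module is not a lattice over any $\Lambda/\omega_n$, and $\Lambda/\omega_{m+1,m}^2$ acts perfectly semisimply on each of its finite-level free quotients; nothing at finite level forbids it. (Two smaller points: the saturations $F_n$ being finitely generated over $\mathbb{Z}$ is asserted without proof, and the theorem requires no control hypothesis on Selmer groups, so your appeal to ``control plus Nakayama'' for finite generation is misplaced --- cofinite generation of $A(K_\infty)\otimes\mathbb{Q}_p/\mathbb{Z}_p$ over $\Lambda$ is standard without it.)

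The missing ingredient is exactly what the paper isolates: the discrete module is a direct limit of the \emph{divisible} groups $A(K_n)\otimes\mathbb{Q}_p/\mathbb{Z}_p$, each killed by $\omega_n$, and the cokernels $C_n$ of $MW_n^A$ satisfy $\varinjlim_n C_n=0$. Applying Lemma 2.1.4-(3) and dualizing, this yields $\mathfrak{G}(X)=\varprojlim_n\bigl(X/\omega_nX\bigr)[p^\infty]=0$, and the appendix computation of $\mathfrak{G}$ on elementary modules ($\mathfrak{G}(\Lambda/g^e)\simeq\Lambda/g^e$ for $g$ coprime to the $\omega_n$, $\mathfrak{G}(\Lambda/\omega_{m+1,m}^e)\simeq\Lambda/\omega_{m+1,m}^{e-1}$, invariance under pseudo-isomorphism) then eliminates precisely the summands listed above. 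Note that the weaker fact you do use --- every element of the discrete module is killed by some $\omega_n$ --- is vacuous (it holds for the dual of \emph{any} finitely generated $\Lambda$-module, since $\omega_n\to 0$ $\mathfrak{m}$-adically and continuous characters have open kernels); it is the divisibility of the finite-level pieces, which in your dual picture is the $\mathbb{Z}_p$-freeness of the $F_n^*$ together with the identification of the direct limit with the full Mordell--Weil group, that does the work, and your sketch never exploits it. To repair the argument you would either have to prove the uniform boundedness of the level-$n$ discrepancies directly (which is not true for general inverse systems of lattices and would have to be extracted from the same divisibility input), or simply run the paper's $\mathfrak{G}$-argument.
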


\medskip

\begin{remark} (1) Hence the direct factors of $E\left((A(K_{\infty}) \otimes_{\mathbb{Z}_p} \mathbb{Q}_p/\mathbb{Z}_p)^{\vee}\right)_{\Lambda-tor}$ are only of the form $\frac{\Lambda}{\omega_{k+1, k}}$ for some $k$. For instance, neither $\frac{\Lambda}{T^5+p}$ nor $\frac{\Lambda}{T^2}$ can not be a direct factor of $E\left((A(K_{\infty}) \otimes_{\mathbb{Z}_p} \mathbb{Q}_p/\mathbb{Z}_p)^{\vee}\right)_{\Lambda-tor}$.
\smallskip
(2) The same proof also works if $K$ is a finite extension of $\mathbb{Q}_p$. (See Page 7 for the proof.) 
\end{remark}

\medskip

One consequence of this Theorem in $p$-adic local case is the following:

\medskip

\begin{cor}[Theorem 2.2.7] Let $L$ be a finite extension of $\mathbb{Q}_p$. If $A_{/ L}$ has potentially supersingular reduction and $L_{\infty}/L$ is a ramified $\mathbb{Z}_p$-extension, then $A(L_{\infty})[p^{\infty}]$ is finite.\end{cor}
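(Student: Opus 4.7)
The plan is to combine the structural input of Theorem 2.1.2 (in its local version per Remark 2.1.3(2)) with the Galois representation theory of supersingular abelian varieties. First I would reduce to the case of \emph{good} supersingular reduction by a finite base change: choose $L'/L$ finite so that $A_{/L'}$ acquires good supersingular reduction, and enlarge $L'$ (by adjoining a suitable finite layer of $L_\infty$) so that $L'_{\infty} := L' \cdot L_\infty$ remains a ramified $\mathbb{Z}_p$-extension of $L'$. Since $A(L_\infty)[p^\infty] \hookrightarrow A(L'_\infty)[p^\infty]$ and finiteness passes to subgroups, I may thereafter assume $A$ has good supersingular reduction over $L$ itself.

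Next, applying Theorem 2.1.2 in the local setting yields a $\Lambda$-linear injection with finite cokernel
$$(A(L_\infty) \otimes_{\mathbb{Z}_p} \mathbb{Q}_p/\mathbb{Z}_p)^\vee \hookrightarrow \Lambda^r \oplus \bigoplus_{i} \frac{\Lambda}{\omega_{b_{i}+1, b_i}}.$$
Since each $\omega_{k+1,k}$ is a distinguished polynomial in $T$, every summand on the right is $\mathbb{Z}_p$-torsion-free, so $(A(L_\infty) \otimes \mathbb{Q}_p/\mathbb{Z}_p)^\vee$ itself is $\mathbb{Z}_p$-torsion-free modulo a finite submodule. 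Combining Mattuck's theorem $A(L_n) \otimes \mathbb{Q}_p/\mathbb{Z}_p \cong (\mathbb{Q}_p/\mathbb{Z}_p)^{g\, p^n\, [L:\mathbb{Q}_p]}$ (with $g = \dim A$) with the control map $A(L_n) \otimes \mathbb{Q}_p/\mathbb{Z}_p \to (A(L_\infty) \otimes \mathbb{Q}_p/\mathbb{Z}_p)^{\Gamma_n}$ and reducing the displayed structure modulo $\omega_n$ then identifies $r = g[L:\mathbb{Q}_p]$ and yields a uniform upper bound on the indices $b_i$.

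The main obstacle, and the step I expect to be hardest, is to pass from this structural information about $A(L_\infty) \otimes_{\mathbb{Z}_p} \mathbb{Q}_p/\mathbb{Z}_p$---which a priori does not see $A(L_\infty)[p^\infty]$, because tensoring a $p$-primary torsion group with $\mathbb{Q}_p/\mathbb{Z}_p$ gives zero---to the genuine finiteness of $A(L_\infty)[p^\infty]$. To cross this bridge I would invoke the classical Galois-theoretic fact that for good supersingular reduction the image of inertia $I_L$ in $\mathrm{Aut}(A[p])$ has order prime to $p$ and acts irreducibly (via appropriate fundamental characters of the formal group). Because $L_\infty/L$ is a pro-$p$ ramified extension, the subgroup $I_{L_\infty} = I_L \cap G_{L_\infty}$ maps onto the same image, so $A[p]$ remains irreducible as an $I_{L_\infty}$-module. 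This rules out any non-trivial $G_{L_\infty}$-fixed $\mathbb{Z}_p$-submodule of $T_p A$, which is exactly the assertion that $A(L_\infty)[p^\infty]$ is finite.
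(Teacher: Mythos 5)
Your reduction to good supersingular reduction and your appeal to the local form of Theorem 2.1.2 are fine, but the step that carries all the weight --- the ``classical Galois-theoretic fact'' that for good supersingular reduction the image of inertia $I_L$ in $\mathrm{Aut}(A[p])$ has order prime to $p$ and acts irreducibly --- is not true in the generality you need, and this is a genuine gap. That statement is Serre's theorem (via the level-two fundamental characters) for \emph{elliptic curves} over fields of small absolute ramification, e.g. over $\mathbb{Q}_p$. For a general finite extension $L/\mathbb{Q}_p$ the $p$-division field of the formal group can be wildly ramified (when $e(L/\mathbb{Q}_p)$ is large the Newton polygon of $[p]$ on the height-two formal group can break, producing torsion points whose valuations have denominator divisible by $p$), so the prime-to-$p$ claim fails; and for $\dim A>1$ irreducibility fails outright, e.g. for $A=E_1\times E_2$ a product of supersingular elliptic curves, where $A[p]$ is visibly reducible. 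Your reduction step makes this worse: $L'$ must absorb a field of good reduction, so you have no control on its ramification. Note also a structural symptom: if the invoked fact were available, it alone would give $A[p]^{G_{L_{\infty}}}=0$ and hence $A(L_{\infty})[p^{\infty}]=0$, so your use of Theorem 2.1.2 (and the Mattuck/corank computation of $r$ and the $b_i$) does no work; the entire difficulty of the corollary is concentrated in the unproved assertion, which is precisely the bridge from the tensor group to the torsion group that you correctly identified as the hard point.

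The paper crosses that bridge cohomologically rather than through the mod-$p$ image of inertia. Since $A$ is supersingular, all $p$-power torsion lies in the formal group $\mathcal{F}$, so $H^{1}(L_{\infty},\mathcal{F})\simeq H^{1}(L_{\infty},A)[p^{\infty}]$; because a ramified $\mathbb{Z}_p$-extension is deeply ramified, Coates--Greenberg gives $H^{1}(L_{\infty},\mathcal{F})=0$, and the Kummer sequence then identifies $H^{1}(L_{\infty},A[p^{\infty}])$ with $A(L_{\infty})\otimes_{\mathbb{Z}_p}\mathbb{Q}_p/\mathbb{Z}_p$. By Greenberg's local computation, the $\Lambda$-torsion submodule of $H^{1}(L_{\infty},A[p^{\infty}])^{\vee}$ is pseudo-isomorphic to $\left(A^{t}(L_{\infty})[p^{\infty}]\right)^{\vee}$ (up to $\iota$). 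Now Theorem 2.1.2 says the torsion part of $\left(A(L_{\infty})\otimes\mathbb{Q}_p/\mathbb{Z}_p\right)^{\vee}$ has elementary factors only of the form $\Lambda/\omega_{k+1,k}$, while Lemma 2.0.1 says the characteristic ideal of $\left(A^{t}(L_{\infty})[p^{\infty}]\right)^{\vee}$ is coprime to every $\omega_{n}$; the two statements are compatible only if that module is finite, and exchanging the roles of $A$ and $A^{t}$ gives the finiteness of $A(L_{\infty})[p^{\infty}]$. If you want to salvage your proposal, you would have to either restrict to elliptic curves over base fields where Serre's inertia result applies, or replace your final step by an argument of this cohomological type.
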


\medskip

\medskip

To analyze the $\Lambda$-module structure of the Tate-Shafarevich group, we introduce a functor $\mathfrak{G}$. For a finitely generated $\Lambda$-module $X$, we define $\displaystyle \mathfrak{G}(X):=\lim_{\substack {\longleftarrow \\ n}}\left( \frac{X}{\omega_nX}[p^{\infty}] \right)$. See the appendix-Proposition A.2.12 for the explicit description of this functor. In particular, $\mathfrak{G}(X)$ is a finitely generated torsion $\Lambda$-module. 

\smallskip

We have the following theorem which provides an answer for Question 1.1.3 and Question 1.1.4.

\medskip

\begin{theoremintro}[Theorem 3.0.4] If $\mathrm{Coker}(S_{n}^{A})$ and $\sha_{K_n}^{1}(A)_p$ are finite for all $n$, then we have an isomorphism $$\sha_{K_{\infty}}^{1}(A)_p^{\vee} \simeq \mathfrak{G}\left(\mathrm{Sel}_{K_{\infty}}(A)_p^{\vee}\right)$$ of $\Lambda$-modules. In particular, $\sha_{K_{\infty}}^{1}(A)_p$ is a cotorsion $\Lambda$-module. 
\end{theoremintro}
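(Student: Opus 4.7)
The plan is to realize $\sha_{K_\infty}^1(A)_p^\vee$ as the inverse limit of the (finite) torsion subgroups of $\mathrm{Sel}_{K_n}(A)_p^\vee$, identify this through the dualized Control Theorem with the inverse system defining $\mathfrak{G}(\mathrm{Sel}_{K_\infty}(A)_p^\vee)$, and then upgrade the resulting comparison into a sharp isomorphism using Theorem A and the structural properties of $\mathfrak{G}$ from the appendix.

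First I would establish a level-by-level identification. The hypothesis that $\sha_{K_n}^1(A)_p$ is finite, applied to the Pontryagin dual of the fundamental Selmer exact sequence, identifies $\sha_{K_n}^1(A)_p^\vee$ with $\mathrm{Sel}_{K_n}(A)_p^\vee[p^\infty]$, since $(A(K_n)\otimes_{\mathbb{Z}_p}\mathbb{Q}_p/\mathbb{Z}_p)^\vee$ is $\mathbb{Z}_p$-free. The finiteness assumption on $\mathrm{Coker}(S_n^A)$ (combined with the standard uniform finiteness of $\ker S_n^A$) then dualizes to maps
$$\alpha_n:\mathrm{Sel}_{K_\infty}(A)_p^\vee/\omega_n\mathrm{Sel}_{K_\infty}(A)_p^\vee\longrightarrow \mathrm{Sel}_{K_n}(A)_p^\vee$$
with uniformly bounded finite kernel and cokernel. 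Restricting to $p^\infty$-torsion and passing to $\varprojlim_n$, the left side becomes $\mathfrak{G}(\mathrm{Sel}_{K_\infty}(A)_p^\vee)$ by definition while the right side becomes $\sha_{K_\infty}^1(A)_p^\vee$, using $\sha_{K_\infty}^1(A)_p=\varinjlim_n\sha_{K_n}^1(A)_p$. This yields a canonical $\Lambda$-linear comparison
$$\Phi:\mathfrak{G}(\mathrm{Sel}_{K_\infty}(A)_p^\vee)\longrightarrow \sha_{K_\infty}^1(A)_p^\vee.$$

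The main obstacle will be upgrading $\Phi$ from a map with uniformly bounded finite error into a genuine isomorphism. To do this I would exploit the explicit description of $\mathfrak{G}$ (Proposition A.2.12) together with Theorem A, which presents $(A(K_\infty)\otimes_{\mathbb{Z}_p}\mathbb{Q}_p/\mathbb{Z}_p)^\vee$ as a submodule with finite cokernel of a direct sum of $\Lambda$ and $\Lambda/\omega_{k+1,k}$. A direct computation shows $\mathfrak{G}$ annihilates each such summand: $\Lambda/\omega_n$ is $\mathbb{Z}_p$-free, so its $p^\infty$-torsion vanishes, and $\Lambda/\omega_{k+1,k}$ is $p$-torsion free and is already killed by $\omega_n$ for $n>k$. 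Applying $\mathfrak{G}$ (with its exactness properties from the appendix) to the dualized fundamental sequence
$$0\to\sha_{K_\infty}^1(A)_p^\vee\to\mathrm{Sel}_{K_\infty}(A)_p^\vee\to(A(K_\infty)\otimes_{\mathbb{Z}_p}\mathbb{Q}_p/\mathbb{Z}_p)^\vee\to 0$$
then produces an isomorphism $\mathfrak{G}(\mathrm{Sel}_{K_\infty}(A)_p^\vee)\simeq\mathfrak{G}(\sha_{K_\infty}^1(A)_p^\vee)$, and the explicit formula of A.2.12 gives $\mathfrak{G}(\sha_{K_\infty}^1(A)_p^\vee)\simeq\sha_{K_\infty}^1(A)_p^\vee$, since the elementary module of $\sha_{K_\infty}^1(A)_p^\vee$ contains no $\Lambda$- or $\Lambda/\omega_{k+1,k}$-summand (these having been absorbed into the Mordell-Weil piece by Theorem A). The $\Lambda$-cotorsion conclusion then follows from the general fact that $\mathfrak{G}$ takes values in finitely generated $\Lambda$-torsion modules.
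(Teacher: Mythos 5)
Your first paragraph is essentially the paper's own argument, but you stop just short of the observation that finishes it, and the ``upgrade'' you then attempt is where the proposal breaks. Dualizing $0\to\mathrm{Ker}(S_n^A)\to\mathrm{Sel}_{K_n}(A)_p\to\mathrm{Sel}_{K_\infty}(A)_p[\omega_n]\to\mathrm{Coker}(S_n^A)\to0$ and applying Lemma 2.1.4-(3) gives, for each $n$, an exact sequence of \emph{finite} groups
\begin{equation*}
0\to\mathrm{Coker}(S_n^A)^\vee[p^\infty]\to\Bigl(\mathrm{Sel}_{K_\infty}(A)_p^\vee/\omega_n\mathrm{Sel}_{K_\infty}(A)_p^\vee\Bigr)[p^\infty]\to\mathrm{Sel}_{K_n}(A)_p^\vee[p^\infty]\to\mathrm{Ker}(S_n^A)^\vee[p^\infty],
\end{equation*}
and the decisive point is that $\varinjlim_n\mathrm{Ker}(S_n^A)=\varinjlim_n\mathrm{Coker}(S_n^A)=0$, because $\mathrm{Sel}_{K_\infty}(A)_p=\varinjlim_n\mathrm{Sel}_{K_n}(A)_p=\varinjlim_n\mathrm{Sel}_{K_\infty}(A)_p[\omega_n]$. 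Hence the outer terms have vanishing projective limit, and $\varprojlim$ (exact on systems of finite modules) already yields the isomorphism $\mathfrak{G}\left(\mathrm{Sel}_{K_\infty}(A)_p^\vee\right)\simeq\varprojlim_n\sha_{K_n}^1(A)_p^\vee\simeq\sha_{K_\infty}^1(A)_p^\vee$ on the nose. In particular no boundedness is needed, and none is available: the hypothesis is only finiteness of $\mathrm{Coker}(S_n^A)$ for each $n$, so your claim that $\alpha_n$ has \emph{uniformly} bounded kernel and cokernel is unjustified; the ``main obstacle'' you set out to overcome does not exist.

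The upgrading argument itself is genuinely wrong. The exactness statements for $\mathfrak{G}$ in the appendix (Lemmas A.2.10 and A.2.11) apply only to short exact sequences whose kernel, respectively cokernel, is finite; in $0\to\sha_{K_\infty}^1(A)_p^\vee\to\mathrm{Sel}_{K_\infty}(A)_p^\vee\to(A(K_\infty)\otimes_{\mathbb{Z}_p}\mathbb{Q}_p/\mathbb{Z}_p)^\vee\to0$ neither is finite, $\mathfrak{G}$ is not exact on it, and so $\mathfrak{G}$ of the quotient being zero does not give $\mathfrak{G}\left(\mathrm{Sel}_{K_\infty}(A)_p^\vee\right)\simeq\mathfrak{G}\left(\sha_{K_\infty}^1(A)_p^\vee\right)$. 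The final link $\mathfrak{G}\left(\sha_{K_\infty}^1(A)_p^\vee\right)\simeq\sha_{K_\infty}^1(A)_p^\vee$ is also unjustified and false in general: Theorem A constrains only the Mordell--Weil quotient and does not prevent $E\left(\sha_{K_\infty}^1(A)_p^\vee\right)$ from containing $\omega$-type factors; on the contrary, Corollary 3.0.6 shows that a factor $\Lambda/\omega_{a+1,a}^{e}$ with $e\geq2$ in $E\left(\mathrm{Sel}_{K_\infty}(A)_p^\vee\right)$ produces the factor $\Lambda/\omega_{a+1,a}^{e-1}$ in $E\left(\sha_{K_\infty}^1(A)_p^\vee\right)$, which $\mathfrak{G}$ kills when $e=2$ and whose exponent it lowers otherwise (Proposition A.2.12). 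The identity $\mathfrak{G}(X)\simeq X$ requires $X/\omega_nX$ finite for all $n$ (characteristic ideal coprime to every $\omega_n$), which is not known for $\sha_{K_\infty}^1(A)_p^\vee$. So both links of your chain fail, and if they held they would force $E\left(\sha_{K_\infty}^1(A)_p^\vee\right)$ to have no $\omega$-type factors at all, which is not a consequence of anything established; the correct completion is the vanishing-in-the-limit argument above.
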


\medskip

\begin{remark}
(1) This theorem can be regarded as a $\Lambda$-adic analogue of the Tate-Shafarevich conjecture. More precisely, if the control theorem (of the Selmer groups) holds, the $\mathbb{Z}_p$-cotorsionness of $\sha_{K_n}^{1}(A)_p$ (i.e. finiteness) at finite level can be lifted to the $\Lambda$-cotorsionness of the $\sha_{K_{\infty}}^{1}(A)_p$.

\smallskip

(2) This theorem also describes the $\Lambda$-module structure of $\sha_{K_{\infty}}^{1}(A)_p^{\vee}$ by the structural data of the $\mathrm{Sel}_{K_{\infty}}(A)_p^{\vee}$: If we know the elementary module of $\mathrm{Sel}_{K_{\infty}}(A)_p^{\vee}$, then we can explicitly write the elementary module of the $\sha_{K_{\infty}}^{1}(A)_p^{\vee}$ and $\left(A(K_{\infty})\otimes \mathbb{Q}_p/\mathbb{Z}_p\right)^{\vee}$. (For the precise statement, see Corollary 3.0.6) 

\smallskip

In this sense, this result \emph{distinguishes} the Mordell-Weil group and the Tate-Shafarevich group from the Selmer group.
\end{remark}

\medskip

For an estimate of the Tate-Shafarevich group at each finite layer $K_n$, we have the following theorem. This generalizes \cite[Theorem 1.10]{greenberg1999iwasawa}.

\begin{theoremintro}[Theorem 4.0.1] Suppose that $\mathrm{Coker}(S_{n}^{A})$ is finite and bounded independent of $n$, and also suppose that $\sha_{K_n}^{1}(A)_p$ is finite for all $n$. Then there exists an integer $\nu$ independent of $n$ such that $$| \sha_{K_n}^{1}(A)_p |=p^{e_n} \quad (n>>0)$$ where $$e_n=p^n \mu\left(\sha_{K_{\infty}}^{1}(A)_p^{\vee}\right) + n \lambda\left(\sha_{K_{\infty}}^{1}(A)_p^{\vee}\right) + \nu. $$
\end{theoremintro}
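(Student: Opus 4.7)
The plan is to combine a control theorem for $\sha$ with the classical Iwasawa growth formula applied to $Y := \sha_{K_{\infty}}^{1}(A)_p^{\vee}$, which by Theorem 3.0.4 is a finitely generated torsion $\Lambda$-module. In particular the invariants $\lambda := \lambda(Y)$ and $\mu := \mu(Y)$ are well-defined, so the right-hand side of the desired formula makes sense a priori.

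The first step is to show that the restriction map $\beta_n : \sha_{K_n}^{1}(A)_p \rightarrow \sha_{K_{\infty}}^{1}(A)_p[\omega_n]$ has finite kernel and cokernel of orders bounded uniformly in $n$. For this, I apply the snake lemma to the commutative diagram whose top row is the short exact sequence
\[
0 \to A(K_n)\otimes \mathbb{Q}_p/\mathbb{Z}_p \to \mathrm{Sel}_{K_n}(A)_p \to \sha_{K_n}^{1}(A)_p \to 0
\]
and whose bottom row is obtained by taking $\omega_n$-torsion (equivalently, $\Gamma_n$-invariants) of the corresponding $K_\infty$-sequence. The middle vertical map is $S_n^A$, whose cokernel is bounded by hypothesis and whose kernel is bounded by a standard inflation-restriction argument. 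To control the left vertical map $\alpha_n$, I would invoke Theorem A (Theorem 2.1.2): since every torsion direct factor of $(A(K_\infty)\otimes \mathbb{Q}_p/\mathbb{Z}_p)^{\vee}$ is of the form $\Lambda/\omega_{k+1,k}$, a direct computation shows that both the $\Gamma_n$-invariants and the $\Gamma_n$-coinvariants of $A(K_\infty)\otimes \mathbb{Q}_p/\mathbb{Z}_p$ differ from $A(K_n)\otimes \mathbb{Q}_p/\mathbb{Z}_p$ by quantities bounded uniformly in $n$.

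The second step is to apply the Iwasawa growth formula to $Y$. Since each $\sha_{K_n}^{1}(A)_p$ is finite by hypothesis and $\beta_n$ has bounded kernel and cokernel, the quantity $|Y/\omega_n Y| = |\sha_{K_{\infty}}^{1}(A)_p[\omega_n]|$ is finite for every $n$; equivalently, the characteristic ideal of $Y$ is coprime to every $\omega_n$. Iwasawa's classical growth theorem for a finitely generated torsion $\Lambda$-module whose characteristic ideal is coprime to all $\omega_n$ then produces a constant $\nu'$ with
\[
|Y/\omega_n Y| = p^{p^n \mu + n \lambda + \nu'} \qquad (n \gg 0).
\]
Combining with the control map $\beta_n$ and noting that the defect ratio $|\sha_{K_n}^{1}(A)_p| / |Y/\omega_n Y|$ stabilizes for $n \gg 0$ yields the desired formula $|\sha_{K_n}^{1}(A)_p| = p^{p^n \mu + n \lambda + \nu}$.

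The principal obstacle is the control step. Since we do not assume $\mathrm{Sel}_{K_\infty}(A)_p$ is $\Lambda$-cotorsion, the rank of $A(K_n)$ can grow with $n$, and bounding the Mordell-Weil map $\alpha_n$ is not automatic; the structural constraint on $(A(K_\infty)\otimes \mathbb{Q}_p/\mathbb{Z}_p)^\vee$ from Theorem A---namely, that its torsion factors are exactly of $\omega_{k+1,k}$-type---is precisely what renders $\alpha_n$ controlled. A secondary subtlety is upgrading the mere boundedness of the $\beta_n$-defect to eventual stability so that $\nu'$ and the defect combine into a single constant $\nu$; this is handled by a careful pseudo-null analysis of $Y = \mathfrak{G}(\mathrm{Sel}_{K_\infty}(A)_p^\vee)$ using its explicit description from Proposition A.2.12.
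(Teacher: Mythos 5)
Your step 1 is the genuine gap: the hypotheses of the theorem do not give you control of $\beta_n \colon \sha_{K_n}^{1}(A)_p \rightarrow \sha_{K_{\infty}}^{1}(A)_p[\omega_n]$ with uniformly bounded cokernel, and in general this statement is false in the setting of the paper. By Theorem 3.0.4 and Corollary 3.0.6, $\sha_{K_{\infty}}^{1}(A)_p^{\vee} \simeq \mathfrak{G}\left(\mathrm{Sel}_{K_{\infty}}(A)_p^{\vee}\right)$ may well contain elementary factors of the form $\Lambda/\omega_{a+1,a}^{e-1}$ (these occur exactly when $\mathrm{Sel}_{K_{\infty}}(A)_p^{\vee}$ has $\omega_{a+1,a}^{e}$-factors with $e \geq 2$, which nothing in the hypotheses excludes). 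For such a factor and $n > a$, the group $\sha_{K_{\infty}}^{1}(A)_p[\omega_n]$ has positive $\mathbb{Z}_p$-corank, hence is infinite, while $\sha_{K_n}^{1}(A)_p$ is finite; so $\mathrm{Coker}(\beta_n)$ is infinite and your snake-lemma control cannot hold. Consequently your step 2 also collapses: the identity $|Y/\omega_n Y| = |\sha_{K_{\infty}}^{1}(A)_p[\omega_n]|$ being finite, equivalently $\mathrm{char}_{\Lambda}(Y)$ coprime to all $\omega_n$, is precisely what may fail, so the classical Iwasawa growth formula cannot be applied to $Y = \sha_{K_{\infty}}^{1}(A)_p^{\vee}$ in the form you invoke. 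The paper itself flags this: Remark 4.0.2(1) states the control of $\sha^1$ only as an expectation, conditional on the coprimality of $\mathrm{char}_{\Lambda}\left(\sha_{K_{\infty}}^{1}(A)_p^{\vee}\right)$ with the $\omega_n$, and its proof of Theorem 4.0.1 deliberately avoids any such control. (A secondary weak point: deducing boundedness of $\mathrm{Coker}(MW_n^A)$ from Theorem A alone is not a "direct computation" -- the structure of the limit module does not by itself bound the finite-level comparison maps; the paper only obtains finiteness of these cokernels, via the snake lemma, in Remark 2.1.6(2).)

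The paper's route sidesteps all of this by never leaving the Selmer group: one first checks, by a direct computation with the structure theorem, that for any finitely generated $\Lambda$-module $Y$ one has
\begin{equation*}
\log_p \left| \frac{Y}{\omega_n Y}[p^{\infty}] \right| = p^n \mu\left(\mathfrak{G}(Y)\right) + n\,\lambda\left(\mathfrak{G}(Y)\right) + \nu \qquad (n \gg 0),
\end{equation*}
the point being that the $p^{\infty}$-torsion part is always finite even when $Y/\omega_n Y$ is not. Applying this to $Y = \mathrm{Sel}_{K_{\infty}}(A)_p^{\vee}$, using the bounded kernel and cokernel of $S_n^A$ to compare $\frac{Y}{\omega_n Y}[p^{\infty}]$ with $\mathrm{Sel}_{K_n}(A)_p^{\vee}[p^{\infty}] \simeq \sha_{K_n}^{1}(A)_p^{\vee}$ (finiteness of $\sha_{K_n}^{1}(A)_p$), and then identifying $\mathfrak{G}\left(\mathrm{Sel}_{K_{\infty}}(A)_p^{\vee}\right) \simeq \sha_{K_{\infty}}^{1}(A)_p^{\vee}$ by Theorem 3.0.4 gives the stated $\mu$ and $\lambda$. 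If you want to salvage your outline, you should replace your $Y = \sha_{K_{\infty}}^{1}(A)_p^{\vee}$ and the control of $\beta_n$ by this torsion-part growth estimate for the Selmer module itself.
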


\medskip

\medskip

Our last main result deals with Question 1.1.5. The combination of the (cyclotomic) Iwasawa Main Conjecture with the analytic functional equation between the (conjectural) $p$-adic L-functions of $A$ and $A^t$ suggests us to expect the equality $$char_{\Lambda}\left(\mathrm{Sel}_{K_{\infty}}(A)_p^{\vee}\right)=char_{\Lambda}\left(\mathrm{Sel}_{K_{\infty}}(A^t)_p^{\vee}\right)^{\iota}$$ of characteristic ideals. Note that if the groups $\mathrm{Sel}_{K_{\infty}}(A)_p^{\vee}$ and $\mathrm{Sel}_{K_{\infty}}(A^t)_p^{\vee}$ have positive $\Lambda$-rank, the above equality of ideals is vacuous.

The below theorem refines this equality of ideals to the statement about the isomorphism classes, which is a generalization of \cite[Theorem 1.14]{greenberg1999iwasawa}.

\medskip

\begin{theoremintro}[Theorem 5.3.3] If $\mathrm{Coker}(S_{n}^{A})$ and $\mathrm{Coker}(S_{n}^{A^t})$ are finite for all $n$, then we have an isomorphism $$E\left(\mathrm{Sel}_{K_{\infty}}(A)_p^{\vee}\right)\simeq E\left(\mathrm{Sel}_{K_{\infty}}(A^t)_p^{\vee}\right)^{\iota}$$
of $\Lambda$-modules. Here $\iota$ is an involution of $\Lambda$ satisfying $\iota(T)=\frac{1}{1+T}-1$.
\end{theoremintro}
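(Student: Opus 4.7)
The plan is to derive a pseudo-isomorphism $\mathrm{Sel}_{K_{\infty}}(A)_p^{\vee} \sim \mathrm{Sel}_{K_{\infty}}(A^t)_p^{\vee, \iota}$ of finitely generated $\Lambda$-modules from Poitou--Tate global duality applied layer by layer, and then invoke the fact that over $\Lambda$ the pseudo-isomorphism class of a finitely generated module determines its elementary module (the $\Lambda$-rank being forced to match by an Euler--Poincar\'e calculation).

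First, at each layer $K_n$ and each $k \ge 1$, I would combine the Poitou--Tate nine-term sequence with the Cassels--Tate pairing to obtain a duality between $\mathrm{Sel}_{K_n}(A[p^k])$ and $\mathrm{Sel}_{K_n}(A^t[p^k])$ up to local cohomology at primes of bad reduction. Letting $k \to \infty$ relates the discrete group $\mathrm{Sel}_{K_n}(A)_p$ to the compact Selmer group built from the Tate module of $A^t$. Then taking $\varprojlim_n$ on the compact side produces the Iwasawa-theoretic compact Selmer module of $A^t$, while on the discrete side the finite-cokernel hypothesis on $S_n^A$ lets me, after Pontryagin dualizing, identify $\varprojlim_n \mathrm{Sel}_{K_n}(A)_p^{\vee}$ with $\mathrm{Sel}_{K_{\infty}}(A)_p^{\vee}$ up to pseudo-null error. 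The outcome is a pseudo-isomorphism between $\mathrm{Sel}_{K_{\infty}}(A)_p^{\vee}$ and the compact Iwasawa Selmer module of $A^t$.

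Second, I would invoke a Jannsen-type result---reflecting that, modulo pseudo-null modules, the functor $\mathrm{Ext}^{1}_{\Lambda}(-,\Lambda)^{\iota}$ is a duality on finitely generated torsion $\Lambda$-modules---to identify the compact Iwasawa Selmer module of $A^t$ with $\mathrm{Sel}_{K_{\infty}}(A^t)_p^{\vee,\iota}$ up to pseudo-null. The $\iota$-twist arises because the $\Lambda$-action on an inverse limit of $\Lambda/\omega_n$-modules is conjugate via $\iota$ to the action on the direct limit. Combining with the previous step yields $E(\mathrm{Sel}_{K_{\infty}}(A)_p^{\vee}) \simeq E(\mathrm{Sel}_{K_{\infty}}(A^t)_p^{\vee})^{\iota}$. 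Alternatively, closer in spirit to the Appendix, one could show directly that $\mathfrak{F}(\mathrm{Sel}_{K_{\infty}}(A)_p^{\vee}) \simeq \mathfrak{F}(\mathrm{Sel}_{K_{\infty}}(A^t)_p^{\vee})^{\iota}$ and analogously for $\mathfrak{G}$; since $E(M)$ is recovered from the $\Lambda$-rank of $M$ together with $\mathfrak{F}(M)$ and $\mathfrak{G}(M)$, the statement then follows in a uniform package that applies equally well to the Mordell--Weil and $\sha^{1}$ parts.

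The principal obstacle will be controlling the local contributions at primes of bad reduction, especially those above $p$: these appear at every layer of the Poitou--Tate sequence, survive the inverse limit as nontrivial $\Lambda$-modules, and must be shown to contribute pseudo-null quantities on both sides of the comparison. This is standard at places not dividing $p$ via Tate local duality and the structure of decomposition groups in a $\mathbb{Z}_p$-extension, but requires care at places above $p$. A secondary difficulty is that the hypothesis provides only level-by-level finiteness of $\mathrm{Coker}(S_n^{A})$ and $\mathrm{Coker}(S_n^{A^t})$---not a uniform bound---so verifying that this nonetheless suffices to kill the $\varprojlim^{1}$-contributions when interchanging $\varprojlim$ with Pontryagin duality will be delicate, and will likely rest on a Mittag--Leffler argument together with a direct analysis of the transition maps in the tower.
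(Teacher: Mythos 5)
Your proposal has two genuine gaps, and they sit exactly where the paper's actual inputs would have to go. First, the pivot object is wrong: the compact Iwasawa Selmer module of $A^t$ (the double limit $\varprojlim_n\varprojlim_k \mathrm{Sel}_{K_n}(A^t[p^k])$) is \emph{not} pseudo-isomorphic to $\mathrm{Sel}_{K_{\infty}}(A)_p^{\vee}$, nor to $\mathrm{Sel}_{K_{\infty}}(A^t)_p^{\vee,\iota}$. Its $\Lambda$-rank matches the corank of the discrete Selmer group, but it carries essentially none of the torsion of the discrete dual: in the cotorsion situation (say a good ordinary elliptic curve over $\mathbb{Q}$ along the cyclotomic line with $\lambda>0$) the compact module is pseudo-null (indeed zero once the torsion points over $K_{\infty}$ are finite), while $\mathrm{Sel}_{K_{\infty}}(A)_p^{\vee}$ has nontrivial $\lambda$ and $\mu$. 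What Poitou--Tate actually yields is a long exact sequence linking $H^1_{\mathrm{Iw}}$, local terms, and the dual discrete Selmer group; extracting a comparison of elementary modules from it is the substance of the Perrin-Riou/Nekov\'a\v{r}-type arguments and is not supplied by your sketch, so composing your two claimed pseudo-isomorphisms does not prove the theorem. Second, your fallback is also insufficient as stated: $E(M)$ is \emph{not} recovered from $\mathrm{rank}_{\Lambda}M$ together with $\mathfrak{F}(M)$ and $\mathfrak{G}(M)$, because both functors annihilate every elementary factor $\Lambda/\omega_{m+1,m}$ occurring with exponent one (compare $M=\Lambda/\omega_{1,0}$ with $M$ finite: same rank, same $\mathfrak{F}$ and $\mathfrak{G}$, different $E$). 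This is precisely why the paper's Lemma 5.3.2(2) requires, besides maps $\mathfrak{G}(X)\to\mathfrak{F}(Y)$ and $\mathfrak{G}(Y)\to\mathfrak{F}(X)$ with finite kernels, the equality $\mathrm{rank}_{\mathbb{Z}_p}(X/\omega_nX)=\mathrm{rank}_{\mathbb{Z}_p}(Y/\omega_nY)$ for \emph{every} $n$; a single Euler--Poincar\'e computation matching $\Lambda$-ranks cannot pin down those factors.

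The paper supplies these two missing inputs arithmetically. The layer-by-layer corank equality comes from the Greenberg--Wiles formula (Corollary 5.1.1) combined with the finiteness of $\mathrm{Ker}(S_n^{A})$, $\mathrm{Coker}(S_n^{A})$ and their analogues for $A^t$; and the finite-kernel map $\mathfrak{G}\left(\mathrm{Sel}_{K_{\infty}}(A^t)_p^{\vee}\right)\to\mathfrak{F}\left(\mathrm{Sel}_{K_{\infty}}(A)_p^{\vee}\right)$ comes from Flach's Galois-equivariant pairing on $(\mathrm{Sel}_{K_n}(\cdot)_p)_{/div}$, functorial under restriction and corestriction in the tower --- not from the Cassels--Tate pairing on $\sha^1$, which covers only part of the non-divisible quotient and whose use would smuggle in finiteness hypotheses the theorem does not make. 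To salvage your global-duality route you would have to carry out the full descent from the Poitou--Tate sequence (controlling local terms at $p$ and the $\varprojlim^1$ issues you anticipate, in the style of Perrin-Riou or Nekov\'a\v{r}), or else reintroduce exactly these two ingredients; interposing $\mathrm{Ext}^1_{\Lambda}(-,\Lambda)$-duality by itself does not bridge the discrete and compact Selmer modules.
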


\medskip

\begin{remark}
(1) As we mentioned earlier, our result generalizes \cite[Theorem 1.14]{greenberg1999iwasawa} in two aspects. Firstly, this result is not just an equality of ideals, but rather a statement about the isomorphism classes. Secondly, we removed the $\Lambda$-cotorsion assumption of the Selmer groups $\mathrm{Sel}_{K_{\infty}}(A)_p^{\vee}$ and $\mathrm{Sel}_{K_{\infty}}(A^t)_p^{\vee}$. (See Remark 1.1.7)

\smallskip

(2) By combining Theorem C and Theorem D, we can compare the $\Lambda$-module structure of $\left(A(K_{\infty})\otimes \mathbb{Q}_p/\mathbb{Z}_p\right)^{\vee}$ with that of $\left(A^t(K_{\infty})\otimes \mathbb{Q}_p/\mathbb{Z}_p\right)^{\vee}$. The same statement holds for $\sha_{K_{\infty}}^{1}(A)_p^{\vee}$ and $\sha_{K_{\infty}}^{1}(A^t)_p^{\vee}$ also. See Proposition 5.3.4 for the precise statement. 
\end{remark}

\medskip

\subsection{Comparison with the former work}
\begin{itemize}
\item Imai \cite{imai1975remark} proved that $A\left(L(\mu_{p^{\infty}}) \right)_{\mathrm{tor}}$ is finite  where $A$ is an abelian variety over a $p$-adic local field $L$ with good reduction. Kato \cite[Page 233]{kato2004p} proved that $A\left(\mathbb{Q}_p(\mu_{p^{\infty}})\right)[p^{\infty}]$ is finite when $A$ is an elliptic curve defined over $\mathbb{Q}$ with no additional assumptions about the reduction type of $A$. Corollary 1.2.10 (Theorem 2.2.7) proves the finiteness of the $p^{\infty}$-torsion group over any ramified $\mathbb{Z}_p$-extensions (of a $p$-adic local field) when an abelian variety has good supersingular reduction. Our approach depends heavily on Theorem A and totally different with those of \cite{imai1975remark} and \cite{kato2004p}. 
\medskip
\item Theorem C is proved in \cite[Theorem 1.10]{greenberg1999iwasawa} under the additional assumption that $\mathrm{Sel}_{K_{\infty}}(A)_p^{\vee}$ is a cotorsion $\Lambda$-module. His formulation used different $\lambda$ and $\mu$, but one can show that our result implies the Greenberg's formula. See Remark 4.0.2 for this issue.
\medskip
\item In the same paper \cite{greenberg1999iwasawa}, Greenberg proved the equality of characteristic ideals between $\mathrm{Sel}_{K_{\infty}}(A)_p^{\vee}$ and $\mathrm{Sel}_{K_{\infty}}(A^t)_p^{\vee}$ under the additional assumption that $\mathrm{Sel}_{K_{\infty}}(A)_p^{\vee}$ is a cotorsion $\Lambda$-module. See \cite[Theorem 1.14]{greenberg1999iwasawa}.
\medskip
\item The technique we will use for the proof of Theorem D gives another proof of the previous algebraic functional equation results, for instance \cite[Theorem 8.2]{rubin1990main}, \cite[Proposition 1, 2, Theorem 2]{greenberg1989iwasawa}, \cite[Theorem 3.8]{jha2014algebraic}, \cite[Theorem 2.10]{jha2015functional}. Especially for \cite[Theorem 3.8]{jha2014algebraic} and \cite[Theorem 2.10]{jha2015functional}, not only we can remove the cotorsionness assumption about the Selmer groups, but also we can upgrade the statements as the comparison between elementary modules of two Selmer groups. 
\end{itemize}

\medskip

\subsection{Organization of the paper}

\medskip

\begin{itemize}
\item We record the definitions (Definition A.2.1 and the Definition A.2.8), basic properties and explicit descriptions (See Proposition A.1.6 and Proposition A.2.12) of the functors $\mathfrak{F}$ and $\mathfrak{G}$ in the appendix.
\item In section 2, we prove Theorem A (Theorem 2.1.2) and its consequence Corollary 1.2.10 (Theorem 2.2.7) by using a functor $\mathfrak{G}$. 
\item In section 3, we define the Selmer group and the Tate-Shafarevich group. We prove Theorem B (Theorem 3.0.4) and its consequence about the separating the Mordell-Weil and the Tate-Shafarevich groups (Corollary 3.0.6).
\item In section 4, we find an estimate on $|\sha_{K_n}^{1}(A)_p |$ and explain why our result is a generalization of the Greenberg's one \cite[Theorem 1.10]{greenberg1999iwasawa}.
\item In section 5, we briefly recall the Greenberg-Wiles formula \cite[Theorem 2.19]{darmon1995fermat} and the properties of the pairing of the Flach  \cite{flach1990generalisation}. Then we finally prove Theorem D (Theorem 5.3.3).
\end{itemize}

\medskip

\subsection{Notations}

We fix the notations below throughout the paper.

\smallskip

\begin{itemize}

\item We fix one rational odd prime $p$, and we let $\Lambda:=\mathbb{Z}_p[\![T]\!]$, the one variable power series ring over $\mathbb{Z}_p$. We also define $\omega_n=\omega_n(T):=(1+T)^{p^n}-1$ and $\omega_{n+1, n}:=\frac{\omega_{n+1}(T)}{\omega_n(T)}$. Note that $\omega_{n+1, n}$ is a distinguished irreducible polynomial in $\mathbb{Z}_p[\![T]\!]$.
\medskip
\item For a locally compact Hausdorff continuous $\Lambda$-module $M$, we define $M^{\vee}:=\text{Hom}_{cts}(M, \mathbb{Q}_p/\mathbb{Z}_p)$ which is also a locally compact Hausdorff. $M^{\vee}$ becomes a continuous $\Lambda$-module via the action defined by $(f\cdot \phi)(m):=\phi(\iota(f)\cdot m)$ where $f \in \Lambda$, $m \in M$, $\phi \in M^{\vee}$. We also define $M^{\iota}$ to be the same underlying set $M$ whose $\Lambda$-action is twisted by an involution $\iota
:T \rightarrow \frac{1}{1+T}-1$ of $\mathbb{Z}_p[\![T]\!]$.
\medskip
\item For a cofinitely generated $\mathbb{Z}_p$-module $X$, we define $X_{/ div}:=\frac{X}{X_{div}}$ where $X_{div}$ is the maximal $p$-divisible subgroup of $X$. Note that $\displaystyle X_{/ div} \simeq \displaystyle \lim_{\substack{\longleftarrow \\ n}}\frac{X}{p^nX}$ and $\left( X_{/div} \right)^{\vee} \simeq X^{\vee}[p^{\infty}]$. 
\medskip
\item For a finitely generated $\Lambda$-module $M$, there are prime elements $g_1, \cdots g_n$ of $\Lambda$, non-negative integer $r$, positive integers $e_1, \cdots e_n$ and a pseudo-isomorphism $\displaystyle M \rightarrow \Lambda^{r}\oplus \left(\bigoplus_{i=1}^{n}\frac{\Lambda}{g_{i}^{e_i}}\right) $. We call $\displaystyle E(M):=\Lambda^{r}\oplus \left(\bigoplus_{i=1}^{n}\frac{\Lambda}{g_{i}^{e_i}}\right)$ as \textbf{an elementary module of $M$} following \cite[Page 292]{neukirch2000cohomology}. 
\end{itemize}

\medskip

\subsection{Acknowledgements} The author would like to thank Haruzo Hida for his endless guidance and encouragement.

\medskip

\section{Structure of limit Mordell-Weil group}

\medskip

In this section, we study the structure of $\Lambda$-adic Mordell-Weil group and prove our first main theorem (Theorem 2.1.2). We first start with a lemma. Hereafter, $^{\vee}$ means the Pontryagin dual.

\medskip

\begin{lemma} Let $F$ be a finite extension of $\mathbb{Q}$ or $\mathbb{Q}_l$ for some prime $l$ and let $A$ be an abelian variety defined over $F$. Take any $\mathbb{Z}_p$-extension $F_{\infty}$ and consider the module $X:=(A(F_{\infty})[p^{\infty}])^{\vee}$.

(1) $X$ is a finitely generated torsion $\Lambda$-module with $\mu=0$, and $char_{\Lambda}X$ is coprime to $\omega_n$ for all $n$.

(2) The modules $ \frac{A(F_{\infty})[p^{\infty}]}{p^n A(F_{\infty})[p^{\infty}]} $ and $\frac{A(F_{\infty})[p^{\infty}]}{\omega_n A(F_{\infty})[p^{\infty}]} $ are finite and bounded independent of $n$.

(3) For the natural maps $$MW_{n}^{A}:A(F_n) \otimes_{\mathbb{Z}_p} \mathbb{Q}_p/\mathbb{Z}_p \rightarrow A(F_{\infty}) \otimes_{\mathbb{Z}_p} \mathbb{Q}_p/\mathbb{Z}_p[\omega_n]$$ and $$S_{n}^{A}:\mathrm{Sel}_{F_n}(A)_p \rightarrow \mathrm{Sel}_{F_{\infty}}(A)_p[\omega_n],$$ the groups $\mathrm{Ker}(MW_n^A) $ and $\mathrm{Ker}(S_n^A) $ are finite and bounded independent of $n$.
\end{lemma}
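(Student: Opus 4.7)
The plan is to exploit the inclusion $M := A(F_\infty)[p^\infty] \hookrightarrow A(\overline{F})[p^\infty] \cong (\mathbb{Q}_p/\mathbb{Z}_p)^{2g}$, which shows $M$ is cofinitely generated over $\mathbb{Z}_p$ of $\mathbb{Z}_p$-corank at most $2g$. Its Pontryagin dual $X = M^\vee$ is then finitely generated over $\mathbb{Z}_p$, hence also finitely generated over $\Lambda$ with $\mu(X) = 0$; applying Cayley--Hamilton to $T$ acting on $X/X_{\mathrm{tor}}$ and clearing the finite $\mathbb{Z}_p$-torsion of $X$ by a suitable $p^N$ yields a nonzero annihilator in $\Lambda$, so $X$ is $\Lambda$-torsion. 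For coprimality of $\mathrm{char}_\Lambda X$ with $\omega_n$, the identifications $(X/\omega_n X)^\vee \simeq M[\omega_n] = M^{\Gamma_n}$ (valid because $\iota$ preserves the ideal $(\omega_n)$) reduce the question to the classical fact that $M^{\Gamma_n} \subseteq A(F_n)[p^\infty]$ is finite, which follows from the Mordell--Weil theorem when $F$ is a number field and from Mattuck's theorem when $F$ is a finite extension of $\mathbb{Q}_l$.

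For (2), I would observe that $X[\omega_n]$ and $X[p^n]$ are both contained in the finite group $X_{\mathrm{tor}}$: multiplication by $p^n$ is injective on $X \otimes \mathbb{Q}_p$ for trivial reasons, while multiplication by $\omega_n$ is injective there thanks to the coprimality established in (1). Hence $|X[\omega_n]|$ and $|X[p^n]|$ are bounded uniformly by $|X_{\mathrm{tor}}|$, and Pontryagin duality $X[\omega_n] \simeq (M/\omega_n M)^\vee$ and $X[p^n] \simeq (M/p^n M)^\vee$ transfers these bounds to $M/\omega_n M$ and $M/p^n M$.

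For (3), my plan is to invoke the inflation--restriction sequence for $\Gamma_n \cong \mathbb{Z}_p$ acting on $A(F_\infty)[p^\infty] = M$, which identifies the kernel of the natural restriction map $H^1(F_n, A[p^\infty]) \to H^1(F_\infty, A[p^\infty])^{\Gamma_n}$ with $H^1(\Gamma_n, M) = M/\omega_n M$ (using that $\Gamma_n$ is pro-cyclic), hence uniformly bounded by (2). Combining this with the Kummer injections $A(F_n) \otimes_{\mathbb{Z}_p} \mathbb{Q}_p/\mathbb{Z}_p \hookrightarrow H^1(F_n, A[p^\infty])$ and $\mathrm{Sel}_{F_n}(A)_p \hookrightarrow H^1(F_n, A[p^\infty])$ bounds $|\mathrm{Ker}(MW_n^A)|$ and $|\mathrm{Ker}(S_n^A)|$ by $|M/\omega_n M|$ uniformly in $n$. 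The only delicate point throughout is careful bookkeeping with the $\iota$-twist in the $\Lambda$-action on Pontryagin duals, which I expect to be the main potential source of bookkeeping errors rather than a conceptual obstacle.
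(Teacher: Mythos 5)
Your proposal is correct and takes essentially the same route as the paper: both arguments rest on the corank bound coming from $A(\overline{F})[p^\infty]\cong(\mathbb{Q}_p/\mathbb{Z}_p)^{2g}$ together with the finiteness of $A(F_n)[p^\infty]$ (Mordell--Weil, resp.\ Mattuck) to get (1) and (2), and both bound the kernels in (3) by $M/\omega_n M$, which is exactly the inflation--restriction identification the paper leaves implicit in the phrase ``by the definition of the Selmer group.'' Your Cayley--Hamilton step and the uniform bound by $|X_{\mathrm{tor}}|$ are just explicit versions of the paper's appeal to the structure theorem, so the difference is only in the level of detail, not in the method.
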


\medskip

We first remark that (2) is a direct consequence of the (1) by the structure theorem of the finitely generated $\Lambda$-modules. Hence we prove (1) and (3) only.

\medskip

\begin{proof} For (1), it suffices to show that $\frac{X}{pX}$ and $\frac{X}{\omega_nX}$ are finite. Since we have isomorphisms $$\frac{X}{pX} \simeq (A(F_{\infty})[p])^{\vee}, \quad \frac{X}{\omega_nX} \simeq (A(F_n)[p^{\infty}])^{\vee}$$ and the groups $A(F_{\infty})[p],  A(F_n)[p^{\infty}]$ are finite, we get (1).

\smallskip

For (3), by the definition of the Selmer group, we have injections $$\mathrm{Ker}(MW_n^A) \hookrightarrow \mathrm{Ker}(S_n^A) \hookrightarrow \frac{A(F_{\infty})[p^{\infty}]}{\omega_n A(F_{\infty})[p^{\infty}]}.$$ Hence the (3) follows from (2).
\end{proof}

\subsection{Proof of Theorem A}

\medskip

We first define a functor $\mathfrak{G}$. (More detailed explanation is given in the appendix.) For a finitely generated $\Lambda$-module $X$, we defined $$\displaystyle \mathfrak{G}(X):=\lim_{\substack {\longleftarrow \\ n}}\left( \frac{X}{\omega_nX}[p^{\infty}] \right).$$ We have the following description of the functor $\mathfrak{G}$. (For the proof, see the second subsection of the appendix.)

\begin{itemize}
\item $\mathfrak{G}(\Lambda)=0$. This shows that the $\mathfrak{G}(X)$ is a torsion $\Lambda$-module.
\item $\mathfrak{G}(\frac{\Lambda}{g^{e}}) \simeq \frac{\Lambda}{g^{e}}$ if $g$ is coprime to $\omega_{n}$ for all $n$.
\item \begin{equation*}
\mathfrak{G}(\frac{\Lambda}{\omega_{m+1, m}^{e}})=\left\{
\begin{array}{cc}
\frac{\Lambda}{\omega_{m+1, m}^{e-1}} & e \geq 2,\\[5pt]
0 & e=1.
\end{array}
\right.
\end{equation*}
\item $\mathfrak{G}$ is a covariant functor and preserves the pseudo-isomorphism.
\end{itemize}

\medskip

\smallskip

\begin{theorem} Let $F$ be a finite extension of $\mathbb{Q}$ or $\mathbb{Q}_p$ and let $F_{\infty}$ be any $\mathbb{Z}_p$-extension of $F$. We identify $\mathbb{Z}_p[\![\mathrm{Gal}(F_{\infty}/F)]\!]$ with $\Lambda$ by fixing a generator $\gamma$ of $\mathrm{Gal}(F_{\infty}/F)$ and identifying with $1+T.$ Then we have:

\smallskip

(1) $\mathfrak{G}\left((A(F_{\infty}) \otimes_{\mathbb{Z}_p} \mathbb{Q}_p/\mathbb{Z}_p)^{\vee}\right)=0$.

\smallskip

(2) There is a $\Lambda$-linear injection $$\displaystyle (A(F_{\infty}) \otimes_{\mathbb{Z}_p} \mathbb{Q}_p/\mathbb{Z}_p)^{\vee} \hookrightarrow \Lambda^{r} 
\oplus\left(\bigoplus_{n=1}^{t} \frac{\Lambda}{\omega_{b_{n}+1, b_{n}}} \right)$$ with finite cokernel for some integers $r, b_1, \cdots b_n$. 
\end{theorem}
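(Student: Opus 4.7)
The plan is to prove (1) first, after which (2) follows formally from the structure theorem. For (2), two observations combine: first, $M := A(F_\infty) \otimes_{\mathbb{Z}_p} \mathbb{Q}_p/\mathbb{Z}_p$ is divisible and so has no nontrivial finite quotient, which dually forces $X = M^{\vee}$ to have no nontrivial finite $\Lambda$-submodule; and second, pseudo-null modules over $\Lambda = \mathbb{Z}_p[\![T]\!]$ are finite. Together these promote the standard pseudo-isomorphism $X \to E(X)$ to an honest injection with finite cokernel. Writing $E(X) = \Lambda^r \oplus \bigoplus_{i} \Lambda/g_i^{e_i}$, the computations of $\mathfrak{G}$ on cyclic modules recalled in the excerpt---$\mathfrak{G}(\Lambda/g^e) \simeq \Lambda/g^e$ when $g$ is coprime to every $\omega_n$ (in particular any $p$-factor survives), and $\mathfrak{G}(\Lambda/\omega_{m+1,m}^{e}) = 0$ only for $e = 1$---combined with additivity of $\mathfrak{G}$ and its invariance under pseudo-isomorphism, deduce from (1) that each $g_i^{e_i}$ must be of the form $\omega_{b_i+1, b_i}$, giving exactly the shape demanded in (2).

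The remaining task is (1). The approach is to dualize. Since $\Lambda/\omega_n$ is a finitely generated free $\mathbb{Z}_p$-module, $X/\omega_n X$ is finitely generated over $\mathbb{Z}_p$, and its Pontryagin dual is the cofinitely generated $\mathbb{Z}_p$-module $M[\omega_n]$. The identity $(Y_{/div})^{\vee} \simeq Y^{\vee}[p^{\infty}]$ from the Notations, applied to $Y = M[\omega_n]$, yields
$$\bigl((X/\omega_n X)[p^{\infty}]\bigr)^{\vee} \;\simeq\; M[\omega_n]/M[\omega_n]_{div}.$$
Hence Pontryagin duality turns the inverse system defining $\mathfrak{G}(X)$ into a direct system of finite abelian groups, and it suffices to prove
$$\varinjlim_{n} \; M[\omega_n]/M[\omega_n]_{div} \;=\; 0.$$
Here I also use the identification $M[\omega_n] = M^{\Gamma_n}$, immediate from $\omega_n = \gamma^{p^n} - 1$ and the choice of topological generator of $\mathrm{Gal}(F_\infty/F)$.

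The crux exploits divisibility of $M$ directly. Let $K_n$ denote the image of the natural map $A(F_n) \otimes_{\mathbb{Z}_p} \mathbb{Q}_p/\mathbb{Z}_p \to M$. Since $A(F_n) \otimes \mathbb{Q}_p/\mathbb{Z}_p$ is divisible---any torsion in $A(F_n)$ is killed upon tensoring with $\mathbb{Q}_p/\mathbb{Z}_p$---so is its quotient $K_n$, and being both $\Gamma_n$-fixed and divisible, $K_n$ lies inside $M[\omega_n]_{div}$. On the other hand, because tensor product commutes with filtered colimits and $A(F_\infty) = \bigcup_n A(F_n)$, one has $\bigcup_n K_n = M$. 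Therefore, for any $\tilde x \in M[\omega_n]$, there exists $N \geq n$ with $\tilde x \in K_N \subseteq M[\omega_N]_{div}$, so the class of $\tilde x$ vanishes in $M[\omega_N]/M[\omega_N]_{div}$, making the direct limit above zero. I do not foresee a serious obstacle beyond the careful bookkeeping of inverse versus direct limits under Pontryagin duality; once the dualization is set up, divisibility of $M$ together with the fact that $A(F_\infty)$ is the directed union of the $A(F_n)$ does all the work.
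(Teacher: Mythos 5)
Your proposal is correct and takes essentially the same route as the paper: (2) is reduced to (1) via the explicit computation of $\mathfrak{G}$ on elementary modules together with the fact that $(A(F_{\infty})\otimes_{\mathbb{Z}_p}\mathbb{Q}_p/\mathbb{Z}_p)^{\vee}$ is $\mathbb{Z}_p$-torsion-free (hence has no finite $\Lambda$-submodule), and (1) is proved by dualizing $\mathfrak{G}$ into the direct limit of the groups $\left(M[\omega_n]\right)_{/div}$ and killing it using divisibility of $A(F_n)\otimes_{\mathbb{Z}_p}\mathbb{Q}_p/\mathbb{Z}_p$ and the exhaustion $M=\varinjlim_n A(F_n)\otimes_{\mathbb{Z}_p}\mathbb{Q}_p/\mathbb{Z}_p$. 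The paper packages your element-wise argument instead as the vanishing of $\varinjlim_n (C_n)_{/div}$ for the cokernels $C_n$ of the maps $MW_n^{A}$ (via Lemma 2.1.4-(3)), but the underlying mechanism is identical.
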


\medskip

We call $A(F_{\infty}) \otimes_{\mathbb{Z}_p} \mathbb{Q}_p/\mathbb{Z}_p$ the limit Mordell-Weil group.

\medskip

\begin{remark}
(1) Hence the direct factors of $E\left((A(F_{\infty}) \otimes_{\mathbb{Z}_p} \mathbb{Q}_p/\mathbb{Z}_p)^{\vee}\right)_{\Lambda-tor}$ are only of the form $\frac{\Lambda}{\omega_{k+1, k}}$ for some $k$.

\smallskip

(2) If $F$ is a number field, then for any integer $e$ and any irreducible distinguished polynomial $h \in \Lambda$ coprime to $\omega_n$ for all $n$, the natural injection $$\sha_{F_{\infty}}^{1}(A)_p^{\vee}[h^e] \hookrightarrow \mathrm{Sel}_{F_{\infty}}(A)_p^{\vee}[h^e]$$ has a finite cokernel.

\smallskip

(3) As it will be clear in the proof, Theorem 2.1.2 holds for any $p$-adic local field $F$ also. This produces one consequence about the finiteness of the $p^{\infty}$-torsion of an abelian variety $A_{/ F}$ over $\mathbb{Z}_p$-extension. (See  Theorem 2.2.7)
\end{remark}

\medskip

Note that $(A(F_{\infty}) \otimes_{\mathbb{Z}_p} \mathbb{Q}_p/\mathbb{Z}_p)^{\vee}$ has no non-trivial finite $\Lambda$-submodule since it is $\mathbb{Z}_p$-torsion-free. Due to Proposition A.2.12, it is enough to show the assertion (1) only. As a preparation for the proof, we record the following lemma.

\medskip

\begin{lemma}\label{lemma A.1.3} (1) Let $R$ be an integral domain and $Q(R)$ be the quotient field of $R$. Consider an exact sequence of $R$-modules $0 \rightarrow A \rightarrow B \rightarrow C \rightarrow 0$ where $A$ is a $R$-torsion module. Then we have a short exact sequence $0 \rightarrow A_{R-\mathrm{tor}} \rightarrow B_{R-\mathrm{tor}} \rightarrow C_{R-\mathrm{tor}} \rightarrow 0$.

\medskip

(2) Let $0\rightarrow A \rightarrow B \rightarrow C \rightarrow 0$ be a short exact sequence of finitely generated $\mathbb{Z}_{p}$-modules. If $A$ has finite cardinality, then we have a short exact sequence $0\rightarrow A=A[p^{\infty}] \rightarrow B[p^{\infty}] \rightarrow C[p^{\infty}] \rightarrow 0$.

\medskip

(3) If $0 \rightarrow X \rightarrow Y \rightarrow  Z \rightarrow W \rightarrow 0$ is an exact sequence of cofinitely generated $\mathbb{Z}_p$-modules with finite $W$, then the sequence $  X_{ / div} \rightarrow  Y_{/ div} \rightarrow  Z_{/ div} \rightarrow W \rightarrow 0$ is exact. 
\end{lemma}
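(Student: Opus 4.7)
The plan is to treat (1) as the core technical step, to derive (2) from it by specialization, and to deduce (3) by Pontryagin dualization. For (1), the inclusion $A \hookrightarrow B$ already lands in $B_{R-\mathrm{tor}}$ since $A$ is assumed $R$-torsion, and $A_{R-\mathrm{tor}} = A$; exactness on the left is then immediate, and exactness in the middle reduces to the observation $\ker(B_{R-\mathrm{tor}} \to C_{R-\mathrm{tor}}) = B_{R-\mathrm{tor}} \cap A = A$. The one substantive point is surjectivity onto $C_{R-\mathrm{tor}}$: given $c$ with $rc = 0$ for some nonzero $r \in R$, I would lift $c$ to $b \in B$, observe that $rb \in A$, use the $R$-torsion hypothesis on $A$ to find a nonzero $s \in R$ annihilating $rb$, and conclude that $sr$ (nonzero since $R$ is an integral domain) annihilates $b$, so $b \in B_{R-\mathrm{tor}}$.

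Part (2) is then the specialization of (1) to $R = \mathbb{Z}_p$, using that a finite abelian group is $\mathbb{Z}_p$-torsion and that on a finitely generated $\mathbb{Z}_p$-module the $\mathbb{Z}_p$-torsion subgroup coincides with the $p^{\infty}$-torsion subgroup, so (1) applied directly delivers the claim.

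For (3), my plan is to pass to Pontryagin duals and use the identity $(M_{/ div})^{\vee} \cong M^{\vee}[p^{\infty}]$ recorded in the notation section. Dualizing the given four-term sequence yields an exact sequence $0 \to W^{\vee} \to Z^{\vee} \to Y^{\vee} \to X^{\vee} \to 0$ of finitely generated $\mathbb{Z}_p$-modules in which $W^{\vee}$ is finite. I would cut this at $M := \mathrm{image}(Z^{\vee} \to Y^{\vee}) = \ker(Y^{\vee} \to X^{\vee})$, apply (2) to the short exact sequence $0 \to W^{\vee} \to Z^{\vee} \to M \to 0$ (in which $W^{\vee}$ is finite) to obtain an exact sequence $0 \to W^{\vee} \to Z^{\vee}[p^{\infty}] \to M[p^{\infty}] \to 0$, and apply the (always valid) left exactness of $[p^{\infty}]$ to the short exact sequence $0 \to M \to Y^{\vee} \to X^{\vee} \to 0$. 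Splicing these produces an exact four-term sequence $0 \to W^{\vee} \to Z^{\vee}[p^{\infty}] \to Y^{\vee}[p^{\infty}] \to X^{\vee}[p^{\infty}]$, and Pontryagin-dualizing back — noting $W_{/ div} = W$ since $W$ is finite — recovers the desired exactness of $X_{/ div} \to Y_{/ div} \to Z_{/ div} \to W \to 0$.

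The only mildly delicate point is that $[p^{\infty}]$ is merely left exact in general, so it cannot be applied wholesale to the dualized four-term sequence; the finiteness of $W$ is precisely the hypothesis that lets (1)/(2) restore exactness at the crucial middle term $Z^{\vee}[p^{\infty}]$. Beyond this small piece of bookkeeping I do not anticipate any serious obstacle.
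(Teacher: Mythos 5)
Your proposal is correct, and it differs from the paper mainly in part (1). For (1) the paper is homological: it uses the identification $\mathrm{Tor}_1^R(M,Q(R)/R)\simeq M_{R\text{-}\mathrm{tor}}$ and the long exact sequence for $-\otimes_R Q(R)/R$, with the hypothesis that $A$ is torsion entering via $A\otimes_R Q(R)=0$ (hence $A\otimes_R Q(R)/R=0$), which kills the obstruction to surjectivity onto $C_{R\text{-}\mathrm{tor}}$; your element-wise chase (lift $c$, note $rb\in A$, kill it by some $s\neq 0$, use that $sr\neq 0$ in a domain) proves exactly the same statement more elementarily, at the cost of not displaying the connecting-map interpretation of the cokernel. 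For (2) both you and the paper simply specialize (1) to $R=\mathbb{Z}_p$, where torsion equals $p^{\infty}$-torsion on finitely generated modules. For (3) your argument is the paper's argument viewed through Pontryagin duality: the paper splits the primal sequence at $T=\mathrm{im}(Y\to Z)$ into $0\to X\to Y\to T\to 0$ and $0\to T\to Z\to W\to 0$, applies duality plus (2) to each piece, and splices; you dualize first and split the dual four-term sequence at $M=\mathrm{im}(Z^{\vee}\to Y^{\vee})\simeq T^{\vee}$, apply (2) and left exactness of $[p^{\infty}]$, then dualize back using $(M_{/div})^{\vee}\simeq M^{\vee}[p^{\infty}]$ — the same decomposition in mirror image, with the same use of finiteness of $W$ (resp.\ $W^{\vee}$) to restore exactness at the crucial spot. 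So the content matches; your (1) buys independence from the flat resolution of $Q(R)/R$, while the paper's Tor argument is shorter once that standard identification is taken for granted.
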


\begin{proof} We prove (1) and (3) only, since (2) is a direct consequence of (1). Note that for an $R$-module $M$, we have $\text{Tor}_{1}^{R}(M, Q(R)/R) \simeq M_{R-\mathrm{tor}}$. Since $A$ is a $R$-torsion module, we have $A \otimes_{R}Q(R)=0$. Now (1) follows from the long exact sequence associated with functor $\text{Tor}_{1}^{R}(-,Q(R)/R)$.

\smallskip

For (3), break up the sequence to $0 \rightarrow X \rightarrow Y \rightarrow T \rightarrow 0 $ and $0 \rightarrow T \rightarrow  Z \rightarrow W \rightarrow 0 $. 
Applying $p^{\infty}$-torsion functor to the Pontryagin dual of the first short exact sequence gives $X_{/ div} \rightarrow Y_{/ div} \rightarrow T_{/ div} \rightarrow 0  $. By (2), we get $0 \rightarrow T_{/ div} \rightarrow  Z_{/ div} \rightarrow W_{/ div} \rightarrow 0$. Combining these two sequences proves the assertion.
\end{proof}

\medskip

\begin{proof}[Proof of $\mathrm{Theorem}$ $2.1.2$] Let $C_n$ be the cokernel of the natural map $$MW_{n}^{A}:A(F_n) \otimes_{\mathbb{Z}_p} \mathbb{Q}_p/\mathbb{Z}_p \rightarrow A(F_{\infty}) \otimes_{\mathbb{Z}_p} \mathbb{Q}_p/\mathbb{Z}_p[\omega_n].$$ Since $\displaystyle \lim_{\substack {\longrightarrow \\ n}}A(F_n) \otimes_{\mathbb{Z}_p} \mathbb{Q}_p/\mathbb{Z}_p=A(F_{\infty}) \otimes_{\mathbb{Z}_p} \mathbb{Q}_p/\mathbb{Z}_p$ by definition, we get $\displaystyle \lim_{\substack {\longrightarrow \\ n}}C_n=0$ and $\displaystyle \lim_{\substack {\longrightarrow \\ n}}(C_n)_{/div}=0$. \\

From an exact sequence $$ A(F_n) \otimes_{\mathbb{Z}_p} \mathbb{Q}_p/\mathbb{Z}_p \rightarrow A(F_{\infty}) \otimes_{\mathbb{Z}_p} \mathbb{Q}_p/\mathbb{Z}_p[\omega_n] \rightarrow C_n \rightarrow 0,$$ we have $$ 0=(A(F_n) \otimes_{\mathbb{Z}_p} \mathbb{Q}_p/\mathbb{Z}_p)_{/div} \rightarrow (A(F_{\infty}) \otimes_{\mathbb{Z}_p} \mathbb{Q}_p/\mathbb{Z}_p[\omega_n])_{/div} \rightarrow (C_n)_{/div} \rightarrow 0$$ by Lemma 2.1.4-(3). Taking direct limit to this sequence shows $$\displaystyle \lim_{\substack {\longrightarrow \\ n}} (A(F_{\infty}) \otimes_{\mathbb{Z}_p} \mathbb{Q}_p/\mathbb{Z}_p[\omega_n])_{/div}=0$$ since $\displaystyle \lim_{\substack {\longrightarrow \\ n}}(C_n)_{/div}=0$. If we take the Pontryagin dual, we get

$$\displaystyle \lim_{\substack {\longleftarrow \\ n}} \frac{(A(F_{\infty}) \otimes_{\mathbb{Z}_p} \mathbb{Q}_p/\mathbb{Z}_p)^{\vee}}{\omega_n  (A(F_{\infty}) \otimes_{\mathbb{Z}_p} \mathbb{Q}_p/\mathbb{Z}_p)^{\vee}}[p^{\infty}]=0.$$

Hence we get
$$\displaystyle \mathfrak{G}\left((A(F_{\infty}) \otimes_{\mathbb{Z}_p} \mathbb{Q}_p/\mathbb{Z}_p)^{\vee}\right):= \lim_{\substack {\longleftarrow \\ n}} \frac{(A(F_{\infty}) \otimes_{\mathbb{Z}_p} \mathbb{Q}_p/\mathbb{Z}_p)^{\vee}}{\omega_n  (A(F_{\infty}) \otimes_{\mathbb{Z}_p} \mathbb{Q}_p/\mathbb{Z}_p)^{\vee}}[p^{\infty}]=0.$$
\end{proof}

\smallskip

Next we state the control result of the limit Mordell-Weil group under the $\Lambda$-cotorsion assumption. For the second statement about the $\lambda$-invariant, we need to use Theorem 2.1.2.\\

\begin{theorem} Let $F$ be a number field, $F_{\infty}$ be a $\mathbb{Z}_p$-extension of $F$ and $F_n$ be the $n$-th layer. The following two assertions are equivalent:

\begin{itemize}
\item The sequence $\lbrace\mathrm{rank}_{\mathbb{Z}}A(F_n)\rbrace_{n \geq 0}$ is bounded.
\item $A(F_{\infty}) \otimes_{\mathbb{Z}_p} \mathbb{Q}_p/\mathbb{Z}_p$ is a cotorsion $\Lambda$-module.
\end{itemize}

\smallskip

If this equivalent condition holds, the natural map $$A(F_n) \otimes_{\mathbb{Z}_p} \mathbb{Q}_p/\mathbb{Z}_p \rightarrow A(F_{\infty}) \otimes_{\mathbb{Z}_p} \mathbb{Q}_p/\mathbb{Z}_p[\omega_n]$$ is surjective for almost all $n$ and $\mathrm{rank}_{\mathbb{Z}} A(F_{n})$ stabilizes to the $\lambda \left( (A(F_{\infty}) \otimes_{\mathbb{Z}_p} \mathbb{Q}_p/\mathbb{Z}_p)^{\vee}\right)$.
\end{theorem}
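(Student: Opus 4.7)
The plan is to translate everything into statements about the finitely generated $\Lambda$-module $M := (A(F_\infty) \otimes_{\mathbb{Z}_p} \mathbb{Q}_p/\mathbb{Z}_p)^\vee$ via Pontryagin duality, setting $V := M^\vee = A(F_\infty) \otimes_{\mathbb{Z}_p} \mathbb{Q}_p/\mathbb{Z}_p$, which is $p$-primary divisible. Since tensor commutes with direct limits, $V = \bigcup_n \mathrm{Image}(MW_n^A)$, and each image lies in $V[\omega_n]$ because $\gamma^{p^n}$ fixes $A(F_n)$. Combined with the finiteness of $\mathrm{Ker}(MW_n^A)$ from Lemma 2.1.1(3), this yields the key identity
$$\mathrm{corank}_{\mathbb{Z}_p}(\mathrm{Image}(MW_n^A)) = \mathrm{rank}_{\mathbb{Z}} A(F_n).$$

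If (ii) holds, Theorem 2.1.2 forces $\mu(M) = 0$, making $M$ a finitely generated torsion-free $\mathbb{Z}_p$-module of rank $\lambda := \lambda(M)$; dually $V \cong (\mathbb{Q}_p/\mathbb{Z}_p)^\lambda$ and $\mathrm{rank}_{\mathbb{Z}} A(F_n) \leq \lambda$ for every $n$. Conversely, assuming $\mathrm{rank}_{\mathbb{Z}} A(F_n) \leq B$ for all $n$, I would show $\mathrm{corank}_{\mathbb{Z}_p}(V) \leq B$ by contradiction: otherwise pick $B+1$ $\mathbb{F}_p$-linearly independent elements of $V[p]$, each lying in some $\mathrm{Image}(MW_{n_i}^A)$; for $n := \max_i n_i$ all of them live in the divisible group $\mathrm{Image}(MW_n^A)[p]$, forcing $\mathrm{corank}(\mathrm{Image}(MW_n^A)) \geq B+1$, contradicting the key identity. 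A finite $\mathbb{Z}_p$-corank of the divisible $V$ makes $M$ finitely generated over $\mathbb{Z}_p$, hence $\Lambda$-torsion.

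Under the equivalent condition, $\mathrm{rank}_{\mathbb{Z}} A(F_n)$ is non-decreasing (from $A(F_n) \hookrightarrow A(F_{n+1})$) and bounded by $\lambda$, so it stabilizes to some $R \leq \lambda$; applying the filtered-union bound with the stable value forces $\lambda = \mathrm{corank}_{\mathbb{Z}_p}(V) \leq R$, so $R = \lambda$. For surjectivity, the key observation is that Theorem 2.1.2 embeds $M$ with finite cokernel into $\bigoplus_k \Lambda/\omega_{b_k+1, b_k}$ (no $\Lambda^r$-factor since $r = 0$), and each cyclotomic factor $\omega_{b_k+1, b_k}$ divides $\omega_N$ for $N := \max_k(b_k+1)$; hence $\omega_N$ annihilates $M$, and dually $V = V[\omega_n]$ for all $n \geq N$. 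For such $n$ also large enough that $\mathrm{rank}_{\mathbb{Z}} A(F_n) = \lambda$, $\mathrm{Image}(MW_n^A)$ is a divisible subgroup of $V \cong (\mathbb{Q}_p/\mathbb{Z}_p)^\lambda$ of full corank $\lambda$; any divisible subgroup of full corank in a divisible $p$-primary module of finite corank equals the whole module, so $\mathrm{Image}(MW_n^A) = V = V[\omega_n]$, i.e., $MW_n^A$ is surjective.

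The only genuinely non-bookkeeping step is the annihilation $\omega_N \cdot M = 0$, which rests crucially on Theorem 2.1.2 restricting the torsion factors of the elementary module of $M$ to cyclotomic polynomials $\omega_{b+1, b}$; a generic $\Lambda$-torsion module (one with a factor such as $\Lambda/(T+p)$) would not be annihilated by any $\omega_n$, and this restriction to cyclotomic factors is precisely what makes the Mordell-Weil piece behave so much more rigidly than a generic Selmer group.
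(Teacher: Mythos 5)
Your proof is correct, but it reaches the conclusion by a different mechanism than the paper and leans on Theorem 2.1.2 more heavily. The paper proves (rank bounded) $\Rightarrow$ (cotorsion) by a commutative-square argument: once $\mathrm{rank}_{\mathbb{Z}}A(F_n)$ stabilizes, each transition map $A(F_n)\otimes_{\mathbb{Z}_p}\mathbb{Q}_p/\mathbb{Z}_p \rightarrow A(F_{n+1})\otimes_{\mathbb{Z}_p}\mathbb{Q}_p/\mathbb{Z}_p$ is surjective (finite kernel plus equal finite corank of divisible groups), so passing to the direct limit gives a surjection $A(F_n)\otimes_{\mathbb{Z}_p}\mathbb{Q}_p/\mathbb{Z}_p \twoheadrightarrow A(F_\infty)\otimes_{\mathbb{Z}_p}\mathbb{Q}_p/\mathbb{Z}_p$; since this factors through $V[\omega_n]\subseteq V$ (with $V:=A(F_\infty)\otimes_{\mathbb{Z}_p}\mathbb{Q}_p/\mathbb{Z}_p$), that inclusion is an equality, which simultaneously yields cotorsionness, the surjectivity of $MW_n^A$ for $n\gg 0$, and the identification of the stable rank with $\lambda$. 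You instead bound $\dim_{\mathbb{F}_p}V[p]$ by socle-counting inside the increasing divisible subgroups $\mathrm{Image}(MW_n^A)$, whose coranks equal $\mathrm{rank}_{\mathbb{Z}}A(F_n)$ by the finite-kernel lemma, deduce that $M=V^{\vee}$ is finitely generated over $\mathbb{Z}_p$ and hence $\Lambda$-torsion, and obtain surjectivity by first extracting $\omega_N\cdot M=0$ from the shape of the embedding in Theorem 2.1.2 (with $r=0$, correctly justified) and then comparing coranks of divisible groups; for the forward direction you use $V\simeq(\mathbb{Q}_p/\mathbb{Z}_p)^{\lambda}$ where the paper only invokes the general bound on $\mathrm{rank}_{\mathbb{Z}_p}(M/\omega_nM)$ for torsion $M$. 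Both routes are valid and non-circular (Theorem 2.1.2 precedes this statement, and the paper itself flags that it is needed for the $\lambda$-claim); yours trades the paper's self-contained limit argument, which derives $V=V[\omega_n]$ without knowing the elementary divisors, for an explicit use of the cyclotomic-factor restriction. Minor point: the finite-kernel statement you cite is Lemma 2.0.1(3), not 2.1.1(3).
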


\medskip

\begin{proof} By Lemma 2.0.1, the natural map $A(F_n) \otimes_{\mathbb{Z}_p} \mathbb{Q}_p/\mathbb{Z}_p \rightarrow A(F_{\infty}) \otimes_{\mathbb{Z}_p} \mathbb{Q}_p/\mathbb{Z}_p[\omega_n]$ has finite kernel for all $n$. If $A(F_{\infty}) \otimes_{\mathbb{Z}_p} \mathbb{Q}_p/\mathbb{Z}_p$ is a cotorsion $\Lambda$-module, then by the $\Lambda$-module theory, $\mathbb{Z}_p$-corank of modules $A(F_{\infty}) \otimes_{\mathbb{Z}_p} \mathbb{Q}_p/\mathbb{Z}_p[\omega_n]$ are bounded, and hence $\mathrm{rank}_{\mathbb{Z}}A(F_n)$ is bounded.

\smallskip

Conversely, assume that $\mathrm{rank}_{\mathbb{Z}}A(F_n)$ is bounded and take $n$ so that $$\mathrm{rank}_{\mathbb{Z}}A(F_{n+k})=\mathrm{rank}_{\mathbb{Z}}A(F_n)$$ for all $k \geq 0$. Now consider the following diagram:

\begin{center}
\begin{tikzcd}
A(F_n) \otimes_{\mathbb{Z}_p} \mathbb{Q}_p/\mathbb{Z}_p \arrow[r, "r"] \arrow[d, "s"] & A(F_{n+1}) \otimes_{\mathbb{Z}_p} \mathbb{Q}_p/\mathbb{Z}_p \arrow[d, "t"] \\
A(F_{\infty}) \otimes_{\mathbb{Z}_p} \mathbb{Q}_p/\mathbb{Z}_p[\omega_n] \arrow[r, hook] & A(F_{\infty}) \otimes_{\mathbb{Z}_p} \mathbb{Q}_p/\mathbb{Z}_p[\omega_{n+1}]
\end{tikzcd}
\end{center}

Since the map $s$ has finite kernel, $\mathrm{Ker}(r)$ is also finite. Considering the $\mathbb{Z}_p$-corank shows that $r$ is surjective. By taking direct limit, we get a surjection $$\displaystyle  A(F_n) \otimes_{\mathbb{Z}_p} \mathbb{Q}_p/\mathbb{Z}_p \twoheadrightarrow A(F_{\infty}) \otimes_{\mathbb{Z}_p} \mathbb{Q}_p/\mathbb{Z}_p.$$

Since this map factors through the natural inclusion $$A(F_{\infty}) \otimes_{\mathbb{Z}_p} \mathbb{Q}_p/\mathbb{Z}_p[\omega_n] \hookrightarrow A(F_{\infty}) \otimes_{\mathbb{Z}_p} \mathbb{Q}_p/\mathbb{Z}_p,$$ the above inclusion is an isomorphism indeed, and hence $A(F_{\infty}) \otimes_{\mathbb{Z}_p} \mathbb{Q}_p/\mathbb{Z}_p$ is a cotorsion $\Lambda$-module. This also proves the assertion about the stabilized value of the sequence $\lbrace\mathrm{rank}_{\mathbb{Z}}A(F_n) \rbrace_{n \geq 0}$.\end{proof}

\begin{remark} (1) In above theorem, the condition that $A(F_{\infty}) \otimes_{\mathbb{Z}_p} \mathbb{Q}_p/\mathbb{Z}_p$ is a cotorsion $\Lambda$-module is \emph{not} enough to guarantee that $$\mathrm{rank}_{\mathbb{Z}} A(F_{n})=\lambda \left( (A(F_{\infty}) \otimes_{\mathbb{Z}_p} \mathbb{Q}_p/\mathbb{Z}_p)^{\vee}\right) \quad (n>>0).$$ Indeed, we need to use Theorem 2.1.2 that the direct factors of $E\left((F(K_{\infty}) \otimes_{\mathbb{Z}_p} \mathbb{Q}_p/\mathbb{Z}_p)^{\vee}\right)_{\Lambda-tor}$ are only of the form $\frac{\Lambda}{\omega_{k+1, k}}$ for some $k$.

\smallskip

(2) If $F$ is a number field, consider a $\Lambda$-linear injection $$\displaystyle (A(F_{\infty}) \otimes_{\mathbb{Z}_p} \mathbb{Q}_p/\mathbb{Z}_p)^{\vee} \hookrightarrow \Lambda^{r} 
\oplus \left(\bigoplus_{n=1}^{t} \frac{\Lambda}{\omega_{b_{n}+1, b_{n}}} \right) $$ in Theorem 2.1.2. Assume that $\mathrm{Coker}(S_{n}^{A})$ and $\sha_{F_n}^{1}(A)_p$ are finite for all $n$. Then by the snake lemma, the natural map $$ A(F_n) \otimes_{\mathbb{Z}_p} \mathbb{Q}_p/\mathbb{Z}_p \rightarrow A(F_{\infty}) \otimes_{\mathbb{Z}_p} \mathbb{Q}_p/\mathbb{Z}_p[\omega_n] $$ has finite kernel and cokernel for all $n$. By comparing the $\mathbb{Z}_p$-coranks of the two modules, we get $$\displaystyle r=\lim_{\substack{n \longrightarrow \infty}}\frac{\mathrm{rank}_{\mathbb{Z}}A(F_n)}{p^n}, \quad a_{n}=\frac{\mathrm{rank}_{\mathbb{Z}}A(F_n)-\mathrm{rank}_{\mathbb{Z}}A(F_{n-1})}{p^{n-1}(p-1)}-r \quad (n \geq 1)$$ where $a_n$ is defined as the number of $1 \leq i \leq t$ satisfying $b_i=n-1$. Hence for this case, we can describe $E\left((A(F_{\infty}) \otimes_{\mathbb{Z}_p} \mathbb{Q}_p/\mathbb{Z}_p)^{\vee}\right)$ by using the sequence $\lbrace\mathrm{rank}_{\mathbb{Z}}A(F_n) \rbrace_{n \geq 0}$.
\end{remark}

\medskip

\subsection{Consequence of Theorem A}

We prove one interesting consequence of Theorem 2.1.2 about the finiteness of the $p^{\infty}$-torsion group of an abelian variety over a $p$-adic local field. Let $L$ be a finite extension of $\mathbb{Q}_p$, $L_{\infty}$ be a $\mathbb{Z}_p$-extension of $L$, $L_n$ be a $n$-th layer, and $A$ be an abelian variety defined over $L$.

We consider $A(L_{\infty})[p^{\infty}]$, which is a cofinitely generated cotorsion $\Lambda$-module by Lemma 2.0.1. If $A$ has (potentially) good reduction over $L$, Imai \cite{imai1975remark} proved that the torsion subgroup of the $A\left( L(\mu_{p^{\infty}}) \right)$ is finite. By using Theorem 2.1.2, we prove that if $A$ has potentially supersingular reduction, $A(L_{\infty})[p^{\infty}]$ is finite for the general ramified $\mathbb{Z}_p$-extension $L_{\infty}/L$.

\smallskip

\begin{theorem} If $A_{/ L}$ has potentially supersingular reduction and $L_{\infty}/L$ is a ramified $\mathbb{Z}_p$-extension, then $A(L_{\infty})[p^{\infty}]$ is finite.
\end{theorem}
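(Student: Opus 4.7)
\emph{Plan.} My strategy is to apply Theorem 2.1.2 in the $p$-adic local setting (permitted by Remark 2.1.3) to constrain the $\Lambda$-module structure of $Y := (A(L_{\infty}) \otimes_{\mathbb{Z}_p} \mathbb{Q}_p/\mathbb{Z}_p)^{\vee}$, and then combine this structural constraint with the supersingular hypothesis and the ramification of $L_{\infty}/L$ to force finiteness of the torsion $A(L_{\infty})[p^{\infty}]$.

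First I would reduce to good supersingular reduction: $A$ acquires such reduction over some finite extension $L'/L$, and since $[L'L_{\infty}:L_{\infty}]$ is finite, finiteness of $A(L_{\infty})[p^{\infty}]$ is equivalent to that of $A(L'L_{\infty})[p^{\infty}]$, while $L'L_{\infty}/L'$ still contains a ramified $\mathbb{Z}_p$-extension (after passing to a cofinal subtower if necessary). In the good supersingular case $\tilde{A}(k_{L_n})[p^{\infty}]=0$, so $A(L_n)[p^{\infty}] = \hat{A}(\mathfrak{m}_{L_n})[p^{\infty}]$.

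Next, applying Theorem 2.1.2 yields an injection $Y \hookrightarrow \Lambda^{r} \oplus \bigoplus_{i=1}^{t} \Lambda/\omega_{b_{i}+1,b_{i}}$ with finite cokernel; the key input is that the only torsion factors allowed are of the form $\Lambda/\omega_{k+1,k}$, i.e. $\mathfrak{G}(Y)=0$. Mattuck's theorem gives $A(L_n) \cong \mathbb{Z}_p^{d[L_n:\mathbb{Q}_p]} \oplus A(L_n)_{\mathrm{tors}}$ (where $d=\dim A$), hence $\operatorname{corank}_{\mathbb{Z}_p}(A(L_n) \otimes \mathbb{Q}_p/\mathbb{Z}_p) = d[L:\mathbb{Q}_p] p^n$. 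Matching this against the $\mathbb{Z}_p$-rank $r p^n + O(1)$ of $Y/\omega_n Y$ via the finite-bounded kernel of $MW_n^A$ from Lemma 2.0.1 forces $r = d[L:\mathbb{Q}_p]$. Writing $X := (A(L_{\infty})[p^{\infty}])^{\vee}$, Lemma 2.0.1 already tells us $X$ is a finitely generated $\mathbb{Z}_p$-module of some rank $s$, and the theorem is equivalent to $s=0$.

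The hard part is translating the structural constraint on $Y$ into a uniform bound on $|A(L_n)[p^{\infty}]| = |X/\omega_n X|$. Since $-\otimes_{\mathbb{Z}_p}\mathbb{Q}_p/\mathbb{Z}_p$ kills torsion, $A(L_{\infty})[p^{\infty}]$ does not appear directly in $Y$; the bridge has to go through the Kummer sequence
\[
0 \to A(L_{\infty}) \otimes \mathbb{Q}_p/\mathbb{Z}_p \to H^1(L_{\infty},A[p^{\infty}]) \to H^1(L_{\infty},A)[p^{\infty}] \to 0,
\]
whose snake-lemma comparison with the corresponding sequence over $L_n$ produces an obstruction in $H^1(\Gamma_n, A(L_{\infty})[p^{\infty}])$. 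I would then exploit the supersingular hypothesis (the image of $G_L$ in $\operatorname{GL}(V_p A)$ is non-abelian, hence $V_p A$ has no $G_{L_{\infty}}$-invariants over any $\mathbb{Z}_p$-extension) together with $\mathfrak{G}(Y)=0$ and the ramification of $L_{\infty}/L$ to show this obstruction is uniformly bounded in $n$, forcing $s=0$.
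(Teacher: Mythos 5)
Your reduction to good supersingular reduction and your use of Theorem 2.1.2 to constrain $Y=(A(L_{\infty})\otimes_{\mathbb{Z}_p}\mathbb{Q}_p/\mathbb{Z}_p)^{\vee}$ are fine (the Mattuck rank count $r=d[L:\mathbb{Q}_p]$ is correct but irrelevant to the torsion question), but the step you yourself flag as ``the hard part'' is where the proof actually lives, and what you sketch there does not close the gap. First, the claim that supersingularity makes the image of $G_L$ in $\mathrm{GL}(V_pA)$ non-abelian and ``hence $V_pA$ has no $G_{L_{\infty}}$-invariants'' is both unjustified and circular: a non-abelian image does not rule out a proper invariant subspace on which $G_L$ acts through the abelian quotient $\mathrm{Gal}(L_{\infty}/L)$, and $(V_pA)^{G_{L_{\infty}}}=0$ is literally equivalent to the finiteness of $A(L_{\infty})[p^{\infty}]$ that you are trying to prove. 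Second, bounding an obstruction in $H^1(\Gamma_n, A(L_{\infty})[p^{\infty}])$ is only announced as a goal; no mechanism is given, and it is not explained why such a bound would force $\mathrm{rank}_{\mathbb{Z}_p}(A(L_{\infty})[p^{\infty}])^{\vee}=0$.

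The missing ideas are precisely the two external inputs the paper uses to build the bridge you are looking for. Supersingularity plus ramification enter through the Coates--Greenberg theory of deeply ramified extensions: for the formal group $\mathcal{F}$ of $A$ one has $H^1(L_{\infty},\mathcal{F})=0$, hence $H^1(L_{\infty},A)[p^{\infty}]=0$, so the Kummer sequence degenerates to an isomorphism $A(L_{\infty})\otimes_{\mathbb{Z}_p}\mathbb{Q}_p/\mathbb{Z}_p \simeq H^1(L_{\infty},A[p^{\infty}])$. Then Greenberg's local computation (\cite[Proposition 3.1]{greenberg1989iwasawa}) identifies the $\Lambda$-torsion submodule of $H^1(L_{\infty},A[p^{\infty}])^{\vee}$, up to pseudo-isomorphism and twisting by $\iota$, with $(A^t(L_{\infty})[p^{\infty}])^{\vee}$; this is what finally puts the torsion group into the Mordell--Weil module, which mere tensoring with $\mathbb{Q}_p/\mathbb{Z}_p$ had erased. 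At that point the structural results do the rest: by Theorem 2.1.2 the torsion part of $Y$ has elementary factors only of the form $\Lambda/\omega_{k+1,k}$, while by Lemma 2.0.1 the characteristic ideal of $(A^t(L_{\infty})[p^{\infty}])^{\vee}$ is coprime to every $\omega_n$ (and $\mu=0$), so the common module must be finite, and finiteness transfers between $A^t$ and $A$. Without these two inputs your constraint $\mathfrak{G}(Y)=0$ simply cannot see $A(L_{\infty})[p^{\infty}]$, so as written the proposal has a genuine gap at its decisive step.
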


\begin{proof} We may assume that $A$ has supersingular reduction over $L$. Let $\mathcal{F}$ be a formal group of $A$. Since $A_{/ L}$ has supersingular reduction, we have an isomorphism $$H^{1}(L_{\infty}, \mathcal{F}) \simeq H^{1}(L_{\infty}, A)[p^{\infty}].$$ Since $L_{\infty}/L$ is a ramified $\mathbb{Z}_p$-extension, by \cite[Proposition 2.10, Theorem 2.13]{coates1996kummer} we have $$H^{1}(L_{\infty}, \mathcal{F})=0.$$ Hence from the Kummer sequence, we have an isomorphism $$H^{1}(L_{\infty}, A[p^{\infty}])^{\vee} \simeq (A(L_{\infty}) \otimes_{\mathbb{Z}_p} \mathbb{Q}_p/\mathbb{Z}_p)^{\vee}.$$ Since $\Lambda$-torsion part of $H^{1}(L_{\infty}, A[p^{\infty}])^{\vee}$ is pseudo-isomorphic to $(A^t(L_{\infty})[p^{\infty}])^{\vee}$ by \cite[Proposition 3.1]{greenberg1989iwasawa} (up to twisting by $\iota$), combining Theorem 2.1.2 and Lemma 2.0.1 shows the desired assertion.
\end{proof}

\smallskip

\begin{remark} (1) In the proof of the above Theorem, we have also proved that $ (A(L_{\infty}) \otimes_{\mathbb{Z}_p} \mathbb{Q}_p/\mathbb{Z}_p)^{\vee} $ is a torsion-free $\Lambda$-module.

\smallskip

(2) Kato \cite[Page 233]{kato2004p} proved that $A\left(\mathbb{Q}_p(\mu_{p^{\infty}})\right)[p^{\infty}]$ is finite when $A$ is an elliptic curve defined over $\mathbb{Q}$. By using Theorem 2.2.7, we can generalize this result to the finiteness of $A(L_{\infty})[p^{\infty}]$ where $A$ is an elliptic curve with potentially good reduction and $L_{\infty}/L$ is a any ramified $\mathbb{Z}_p$-extension.

The proof of the potentially supersingular case follows from Theorem 2.2.7. For the potentially ordinary case, one can use the filtration on the Tate module $T_pA$. 
\end{remark}

\medskip

\section{Selmer and $\sha^{1}$}

\medskip

We study the Selmer group and the Tate-Shafarevich group in this section. We will describe the $\Lambda$-adic Tate-Shafarevich group by the $\Lambda$-adic Selmer group under mild assumptions. (See Theorem 3.0.4)

\medskip

\begin{definition}\label{definition 4.1.3} Let $F$ be a number field and $A$ be an abelian variety over $F$. Let $S$ be a finite set of places of $F$ containing the places over $p$, infinite places and places of bad reductions of $A$. We define the Selmer group and the Tate-Shafarevich group as follows:

(1) $\displaystyle \mathrm{Sel}_{F}(A)_p:=\mathrm{Ker}\left(H^{1}(F^S/F, A[p^{\infty}]) \rightarrow \prod_{\substack{v \in S}}H^{1}(F_v, A)\right)
$.

(2) $\displaystyle \sha_{F}^{1}(A)_p:=\mathrm{Ker}\left(H^{1}(F^S/F, A)[p^{\infty}] \rightarrow
\prod_{\substack{v \in S}}H^{1}(F_v, A)[p^{\infty}]\right)$.
\end{definition}

\begin{remark}\label{remark 4.1.4} By \cite[Corollary $\mathrm{I}.6.6$]{milne2006arithmetic}, this definition is independent of the choice of $S$ as long as $S$ contains infinite places, primes over $p$ and primes of bad reduction of $A$. Moreover, all modules in the above definition are cofinitely generated $\mathbb{Z}_p$-modules.
\end{remark}

\medskip

\begin{notation} Hereafter, 

(1) we let $A$ be an abelian variety over a number field $K$ and let $S$ be a finite set of places of $K$ containing the places over $p$, infinite places and places of bad reductions of $A$. We fix a $\mathbb{Z}_p$-extension $K_{\infty}$ of $K$.

\smallskip

(2) we identify $\mathbb{Z}_p[\![\mathrm{Gal}(K_{\infty}/K)]\!]$ with $\Lambda$ by fixing a generator $\gamma$ of $\mathrm{Gal}(K_{\infty}/K)$ and identifying with $1+T.$

\smallskip

(3) we let $$S_{n}^{A}:\mathrm{Sel}_{K_n}(A)_p \rightarrow \mathrm{Sel}_{K_{\infty}}(A)_p[\omega_n]$$ be the natural restriction map.
\end{notation} 

\medskip

Next we state the result that under the control of Selmer groups and Tate-Shafarevich conjecture, $\sha_{K_{\infty}}^{1}(A)_p$ is a cotorsion $\Lambda$-module. This can be regarded as a $\Lambda$-adic analogue of Tate-Shafarevich conjecture. (Recall that $\displaystyle \mathfrak{G}(X)=\lim_{\substack{\longleftarrow \\ n}}\left(\frac{X}{\omega_{n}X}[p^{\infty}]\right)$.) \\

\begin{theorem} If $\mathrm{Coker}(S_{n}^{A})$ and $\sha_{K_n}^{1}(A)_p$ are finite for all $n$, then we have an isomorphism $$\sha_{K_{\infty}}^{1}(A)_p^{\vee} \simeq \mathfrak{G}\left(\mathrm{Sel}_{K_{\infty}}(A)_p^{\vee}\right)$$ of $\Lambda$-modules. In particular, $\sha_{K_{\infty}}^{1}(A)_p$ is a cotorsion $\Lambda$-module. 
\end{theorem}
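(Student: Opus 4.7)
The plan is to establish, at the level of discrete $\Lambda$-modules, an isomorphism
$$\sha_{K_{\infty}}^{1}(A)_p \;\simeq\; \varinjlim_n \bigl(\mathrm{Sel}_{K_{\infty}}(A)_p[\omega_n]\bigr)_{/\mathrm{div}},$$
and then pass to Pontryagin duals. Writing $X := \mathrm{Sel}_{K_{\infty}}(A)_p^{\vee}$, the Pontryagin dual of $\bigl(\mathrm{Sel}_{K_{\infty}}(A)_p[\omega_n]\bigr)_{/\mathrm{div}}$ is $(X/\omega_n X)[p^{\infty}]$ (by the dualities $N[\omega_n]^{\vee}=N^{\vee}/\omega_n N^{\vee}$ and $(M_{/\mathrm{div}})^{\vee}=M^{\vee}[p^{\infty}]$ from the Notations section), and Pontryagin duality sends direct limits to inverse limits, so the dual of the right-hand side is precisely $\mathfrak{G}(X)$ by definition.

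First I would establish the identification $(\mathrm{Sel}_{K_n}(A)_p)_{/\mathrm{div}} = \sha_{K_n}^{1}(A)_p$: in the defining sequence $0 \to A(K_n)\otimes \mathbb{Q}_p/\mathbb{Z}_p \to \mathrm{Sel}_{K_n}(A)_p \to \sha_{K_n}^{1}(A)_p \to 0$ the left term is divisible and the right term is finite by hypothesis, so the left term is exactly the maximal divisible subgroup of $\mathrm{Sel}_{K_n}(A)_p$. Next I would feed the control exact sequence
$$0 \to \mathrm{Ker}(S_{n}^{A}) \to \mathrm{Sel}_{K_n}(A)_p \to \mathrm{Sel}_{K_{\infty}}(A)_p[\omega_n] \to \mathrm{Coker}(S_{n}^{A}) \to 0,$$
whose kernel is finite by Lemma 2.0.1 and whose cokernel is finite by hypothesis, into Lemma 2.1.4(3) to obtain the four-term exact sequence
$$\mathrm{Ker}(S_{n}^{A}) \to \sha_{K_n}^{1}(A)_p \to \bigl(\mathrm{Sel}_{K_{\infty}}(A)_p[\omega_n]\bigr)_{/\mathrm{div}} \to \mathrm{Coker}(S_{n}^{A}) \to 0.$$

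Taking the direct limit over $n$ (which is exact) then yields the desired isomorphism once I verify that $\varinjlim_n \mathrm{Ker}(S_{n}^{A}) = \varinjlim_n \mathrm{Coker}(S_{n}^{A}) = 0$; I also use $\varinjlim_n \sha_{K_n}^{1}(A)_p = \sha_{K_{\infty}}^{1}(A)_p$, obtained by applying $\varinjlim$ to the Kummer-type sequence displayed in the introduction. This vanishing is the step I expect to be the main technical point, and it will follow from a parallel direct-limit argument applied to the control sequence itself, using the identifications $\varinjlim_n \mathrm{Sel}_{K_n}(A)_p = \mathrm{Sel}_{K_{\infty}}(A)_p$ (from $K_{\infty} = \bigcup_n K_n$ and the fact that Galois cohomology of a nested union of fields with discrete coefficients is the direct limit of the cohomologies at each layer, compatibly with the local Selmer conditions) and $\varinjlim_n \mathrm{Sel}_{K_{\infty}}(A)_p[\omega_n] = \mathrm{Sel}_{K_{\infty}}(A)_p$ (tautological): under these identifications the colimit of the restriction maps $S_{n}^{A}$ is an isomorphism, and exactness of $\varinjlim$ forces its kernel and cokernel systems to vanish in the limit. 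Pontryagin dualizing the resulting isomorphism gives $\sha_{K_{\infty}}^{1}(A)_p^{\vee} \simeq \mathfrak{G}(\mathrm{Sel}_{K_{\infty}}(A)_p^{\vee})$, and the cotorsion conclusion is automatic because $\mathfrak{G}(X)$ is always $\Lambda$-torsion ($\mathfrak{G}(\Lambda)=0$ and $\mathfrak{G}$ respects pseudo-isomorphisms, as recalled at the start of Section 2.1).
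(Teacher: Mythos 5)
Your proposal is correct and is essentially the paper's own argument: the paper applies Lemma 2.1.4(3) to the control sequence, identifies $\mathrm{Sel}_{K_n}(A)_p^{\vee}[p^{\infty}]$ with $\sha_{K_n}^{1}(A)_p^{\vee}$ via finiteness of $\sha_{K_n}^{1}(A)_p$, and uses $\varinjlim_n \mathrm{Ker}(S_n^A)=\varinjlim_n\mathrm{Coker}(S_n^A)=0$ before taking limits, exactly as you do. The only difference is that you take the direct limit on the discrete side and dualize once at the end, whereas the paper dualizes each four-term sequence first and then takes the projective limit; since Pontryagin duality interchanges direct and inverse limits, the two are the same proof.
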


\medskip

Hence under the Tate-Shafarevich conjecture, if the control theorem for the Selmer groups holds, then we can describe the $\Lambda$-module structure of $\sha_{K_{\infty}}^{1}(A)_p$ by using $\mathrm{Sel}_{K_{\infty}}(A)_p$.

\medskip

\begin{remark} (1) Since we have an explicit description of the functor $\mathfrak{G}$ in the appendix (Proposition A.2.12), if we have data of $E\left(\mathrm{Sel}_{K_{\infty}}(A)_p^{\vee}\right)$, we can disassemble the factors of $(A(K_{\infty}) \otimes_{\mathbb{Z}_p} \mathbb{Q}_p/\mathbb{Z}_p)^{\vee}$ and the factors of $\sha_{K_{\infty}}^{1}(A)_p^{\vee}$ from $E\left(\mathrm{Sel}_{K_{\infty}}(A)_p^{\vee}\right)$ completely. See Corollary 3.0.6 below.

\smallskip

(2) If we make the assumption only about the $\mathrm{Coker}(S_{n}^{A})$, then we have the similar statement for the $\Lambda$-adic Bloch-Kato's Tate-Shafarevich group (which is defined to be finite at finite layers $K_n$). This remark can be applied to Corollary 3.0.6, Theorem 4.0.1, Proposition 5.3.4, and Theorem 5.3.5.
\end{remark}

\medskip

\begin{proof}[Proof of $\mathrm{Theorem}$ $3.0.4$] We start from the natural map $S_{n}^{A}:\mathrm{Sel}_{K_n}(A)_p \rightarrow \mathrm{Sel}_{K_{\infty}}(A)_p[\omega_n]$. Note that $\displaystyle \lim_{\substack{\longrightarrow \\ n}}\mathrm{Ker}(S_n^{A})=\lim_{\substack{\longrightarrow \\ n}}\mathrm{Coker}(S_n^{A})=0$ by definition. Hence we get $$\displaystyle \lim_{\substack{\longleftarrow \\ n}}\mathrm{Ker}(S_n^{A})^{\vee}[p^{\infty}]=\lim_{\substack{\longleftarrow \\ n}}\mathrm{Coker}(S_n^{A})^{\vee}[p^{\infty}]=0.$$

\smallskip

On the other hand, since $\mathrm{Coker}(S_n^{A})$ is finite, by Lemma 2.1.4-(3) we get an exact sequence $$0 \rightarrow \mathrm{Coker}(S_n^{A})^{\vee}[p^{\infty}] \rightarrow \frac{\mathrm{Sel}_{K_{\infty}}(A)_p^{\vee}}{\omega_n \mathrm{Sel}_{K_{\infty}}(A)_p^{\vee}}[p^{\infty}] \rightarrow \mathrm{Sel}_{K_n}(A)_p^{\vee}[p^{\infty}] \rightarrow \mathrm{Ker}(S_n^{A})^{\vee}[p^{\infty}]$$  where $\mathrm{Sel}_{K_n}(A)_p^{\vee}[p^{\infty}]$ is isomorphic to $\sha_{K_n}^{1}(A)_p^{\vee}$ since $\sha_{K_n}^{1}(A)_p$ is finite. Now taking projective limit to the above sequence gives

$$\displaystyle \mathfrak{G}\left(\mathrm{Sel}_{K_{\infty}}(A)_p^{\vee}\right) := \lim_{\substack {\longleftarrow \\ n}}\frac{\mathrm{Sel}_{K_{\infty}}(A)_p^{\vee}}{\omega_n \mathrm{Sel}_{K_{\infty}}(A)_p^{\vee}}[p^{\infty}] \simeq \sha_{K_{\infty}}^{1}(A)_p^{\vee}. $$
\end{proof}

\smallskip

Under the same conditions with the above theorem, we can \emph{separate} the Mordell-Weil group and the Tate-Shafarevich group from the Selmer group. More precisely, we describe the elementary modules of the Mordell-Weil group and the Tate-Shafarevich group by using the elementary module of the Selmer group. 

\medskip

\begin{cor} Suppose that $\mathrm{Coker}(S_{n}^{A})$ and $\sha_{K_n}^{1}(A)_p$ are finite for all $n$ and let $$\displaystyle E\left(\mathrm{Sel}_{K_{\infty}}(A)_p^{\vee}\right) \simeq \Lambda^{r} 
\oplus \left(\bigoplus_{i=1}^{d} \frac{\Lambda}{g_{i}^{l_{i}}}\right) 
\oplus \left(\bigoplus_{\substack{m=1 \\ e_{1} \cdots e_{f} \geq 2}}^{f} \frac{\Lambda}{\omega_{a_{m}+1, a_{m}}^{e_{m}}} \right)
\oplus \left(\bigoplus_{n=1}^{t} \frac{\Lambda}{\omega_{b_{n}+1, b_{n}}}\right) $$
where $r \geq 0$, $g_{1}, \cdots g_{d}$ are prime elements of $\Lambda$ which are coprime to $\omega_{n}$ for all $n$, $d \geq 0$, $l_{1}, \cdots, l_{d} \geq 1$, $f \geq 0$, $e_{1}, \cdots, e_{f} \geq 2$ and $t \geq 0$. Then we have isomorphisms 

$$ \displaystyle E\left(\sha_{K_{\infty}}^{1}(A)_p^{\vee}\right) \simeq  \left(\bigoplus_{i=1}^{d} \frac{\Lambda}{g_{i}^{l_{i}}}\right) 
\oplus \left(\bigoplus_{\substack{m=1 \\ e_{1} \cdots e_{f} \geq 2}}^{f} \frac{\Lambda}{\omega_{a_{m}+1, a_{m}}^{e_{m}-1}} \right)$$ and $$ \displaystyle E\left((A(K_{\infty}) \otimes_{\mathbb{Z}_p} \mathbb{Q}_p/\mathbb{Z}_p)^{\vee}\right) \simeq \Lambda^{r} 
\oplus \left(\bigoplus_{\substack{m=1 \\ e_{1} \cdots e_{f} \geq 2}}^{f} \frac{\Lambda}{\omega_{a_{m}+1, a_{m}}} \right)
\oplus \left(\bigoplus_{n=1}^{t} \frac{\Lambda}{\omega_{b_{n}+1, b_{n}}}\right).$$
\end{cor}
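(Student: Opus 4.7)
The plan is to compute $E(\sha_{K_\infty}^1(A)_p^\vee)$ by applying the functor $\mathfrak{G}$ summand-by-summand to the given form of $E(\mathrm{Sel}_{K_\infty}(A)_p^\vee)$, and then read off $E((A(K_\infty)\otimes_{\mathbb{Z}_p}\mathbb{Q}_p/\mathbb{Z}_p)^\vee)$ from the dual of the defining exact sequence of $\sha_{K_\infty}^1(A)_p$, using Theorem 2.1.2 to supply the rigidity needed to pass from length data to the actual elementary module.

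For the first isomorphism, Theorem 3.0.4 gives $\sha_{K_\infty}^1(A)_p^\vee \simeq \mathfrak{G}(\mathrm{Sel}_{K_\infty}(A)_p^\vee)$. Since $\mathfrak{G}$ is a covariant functor preserving pseudo-isomorphism, it carries the pseudo-isomorphism $\mathrm{Sel}_{K_\infty}(A)_p^\vee \sim E(\mathrm{Sel}_{K_\infty}(A)_p^\vee)$ over, and the summand formulas recalled in Section 2.1 (namely $\mathfrak{G}(\Lambda)=0$, $\mathfrak{G}(\Lambda/g^l)\simeq \Lambda/g^l$ when $g$ is coprime to every $\omega_n$, $\mathfrak{G}(\Lambda/\omega_{m+1,m}^e)\simeq \Lambda/\omega_{m+1,m}^{e-1}$ for $e\geq 2$, and $\mathfrak{G}(\Lambda/\omega_{m+1,m})=0$) kill the free part and the $\omega_{b_n+1,b_n}$ factors while shaving one exponent off each $\omega_{a_m+1,a_m}^{e_m}$. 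This yields the claimed form of $E(\sha_{K_\infty}^1(A)_p^\vee)$.

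For the second isomorphism, I dualize $0\to A(K_\infty)\otimes_{\mathbb{Z}_p}\mathbb{Q}_p/\mathbb{Z}_p\to \mathrm{Sel}_{K_\infty}(A)_p\to \sha_{K_\infty}^1(A)_p\to 0$ to obtain a short exact sequence
$$0\to \sha_{K_\infty}^1(A)_p^\vee \to \mathrm{Sel}_{K_\infty}(A)_p^\vee \to (A(K_\infty)\otimes_{\mathbb{Z}_p}\mathbb{Q}_p/\mathbb{Z}_p)^\vee \to 0.$$
Since $\Lambda$-rank and length at each height-one prime $\mathfrak{p}$ are additive over the DVR $\Lambda_{\mathfrak{p}}$, subtraction of the two known elementary modules gives $(A(K_\infty)\otimes_{\mathbb{Z}_p}\mathbb{Q}_p/\mathbb{Z}_p)^\vee$ a $\Lambda$-rank of $r$, an $\omega_{k+1,k}$-adic length equal to $\#\{m:a_m=k\}+\#\{n:b_n=k\}$, and zero length at every other height-one prime.

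The main obstacle is the standard fact that length data at each height-one prime does not in general pin down the elementary module (for instance $0\to\Lambda/p\to\Lambda/p^2\to\Lambda/p\to 0$ has the same total $p$-length as $(\Lambda/p)^{\oplus 2}$, but the two have different elementary modules). Theorem 2.1.2 (Theorem A) resolves this for us: it forces every $\omega_{k+1,k}$-primary summand of $E((A(K_\infty)\otimes_{\mathbb{Z}_p}\mathbb{Q}_p/\mathbb{Z}_p)^\vee)$ to be $\Lambda/\omega_{k+1,k}$ itself rather than a higher power, so the $\omega_{k+1,k}$-primary component is determined by its length alone. Combined with the length computation above, this produces precisely the stated decomposition.
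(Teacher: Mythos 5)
Your proposal is correct and takes essentially the same route as the paper: the first isomorphism comes from Theorem 3.0.4 combined with the explicit description of $\mathfrak{G}$ (Proposition A.2.12), and the second from additivity of $\Lambda$-rank and local lengths in the dual exact sequence $0 \rightarrow \sha_{K_{\infty}}^{1}(A)_p^{\vee} \rightarrow \mathrm{Sel}_{K_{\infty}}(A)_p^{\vee} \rightarrow (A(K_{\infty}) \otimes_{\mathbb{Z}_p} \mathbb{Q}_p/\mathbb{Z}_p)^{\vee} \rightarrow 0$ together with the rigidity supplied by Theorem 2.1.2. You merely make explicit (via localized lengths and the exponent-one constraint of Theorem A) what the paper compresses into Lemma 2.1.4-(1) and a brief appeal to Theorem 3.0.4, and your identification of why Theorem 2.1.2 is needed to pass from length data to the actual elementary module is exactly the right point.
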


\begin{proof} The isomorphism for $\sha_{K_{\infty}}^{1}(A)_p^{\vee}$ is a direct consequence of Theorem 3.0.4. For the second isomorphism, since $\sha_{K_{\infty}}^{1}(A)_p^{\vee}$ is a torsion $\Lambda$-module by Theorem 3.0.4, the exact sequence $$0 \rightarrow \sha_{K_{\infty}}^{1}(A)_p^{\vee} \rightarrow \mathrm{Sel}_{K_{\infty}}(A)_p^{\vee} \rightarrow (A(K_{\infty}) \otimes_{\mathbb{Z}_p} \mathbb{Q}_p/\mathbb{Z}_p)^{\vee} \rightarrow 0$$ induces another exact sequence $$ 0 \rightarrow \left(\sha_{K_{\infty}}^{1}(A)_p^{\vee} \right) _{\Lambda-\mathrm{tor}} \rightarrow \left(\mathrm{Sel}_{K_{\infty}}(A)_p^{\vee}\right)_{\Lambda-\mathrm{tor}} \rightarrow \left(A(K_{\infty}) \otimes_{\mathbb{Z}_p} \mathbb{Q}_p/\mathbb{Z}_p\right)^{\vee}_{\Lambda-\mathrm{tor}} \rightarrow 0 $$ by Lemma 2.1.4-(1). Now Theorem 3.0.4 shows the desired isomorphism.
\end{proof}

\medskip

\section{Estimates on the size of $\sha^{1}$}

Now we find an estimate of $|\sha_{K_n}^{1}(A)_p|$, which is an analogue of the Iwasawa's class number formula \cite[Theorem 1.1]{greenberg1999iwasawa}.

\medskip

\begin{theorem} Suppose that $\mathrm{Coker}(S_{n}^{A})$ is finite and bounded independent of $n$ and also suppose that $\sha_{K_n}^{1}(A)_p$ is finite for all $n$. Then there exists an integer $\nu$ independent of $n$ such that $$| \sha_{K_n}^{1}(A)_p |=p^{e_n} \quad (n>>0)$$ where $$e_n=p^n \mu\left(\sha_{K_{\infty}}^{1}(A)_p^{\vee}\right) + n \lambda\left(\sha_{K_{\infty}}^{1}(A)_p^{\vee}\right) + \nu. $$
\end{theorem}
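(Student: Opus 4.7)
The plan is to reduce the size estimate to a growth statement for the $p^\infty$-torsion of $Y/\omega_n Y$, where $Y := \mathrm{Sel}_{K_\infty}(A)_p^\vee$. The four-term exact sequence extracted in the proof of Theorem~3.0.4,
\begin{equation*}
0 \to \mathrm{Coker}(S_n^A)^\vee[p^\infty] \to (Y/\omega_n Y)[p^\infty] \to \sha_{K_n}^1(A)_p^\vee \to \mathrm{Ker}(S_n^A)^\vee[p^\infty],
\end{equation*}
has outer terms bounded independent of $n$, by the hypothesis on $\mathrm{Coker}(S_n^A)$ together with Lemma~2.0.1(3). Hence $\log_p|\sha_{K_n}^1(A)_p|$ differs from $\log_p|(Y/\omega_n Y)[p^\infty]|$ by a quantity bounded uniformly in $n$, so it suffices to prove $\log_p|(Y/\omega_n Y)[p^\infty]| = p^n\mu(\mathfrak{G}(Y)) + n\,\lambda(\mathfrak{G}(Y)) + O(1)$.

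Next I would fix a pseudo-isomorphism $Y \to E(Y)$ and reduce to a summand-by-summand computation on $E(Y)$: since its kernel and cokernel are both finite, a snake-lemma argument (using $|C[\omega_n]|\le|C|$ for finite $C$) shows that the induced maps on $(\,\cdot\,/\omega_n)[p^\infty]$ have kernels and cokernels bounded uniformly in $n$. A free summand $\Lambda$ contributes $0$, since its quotient is $\mathbb{Z}_p$-free. A summand $\Lambda/p^f$ contributes $fp^n$, via $(\Lambda/p^f)/\omega_n \simeq (\mathbb{Z}/p^f)[\Gamma/\Gamma^{p^n}]$. A summand $\Lambda/g^a$ with $g$ distinguished and coprime to every $\omega_k$ contributes $(a\deg g)\,n + O(1)$ by the classical Iwasawa growth formula, which applies since the quotient $\Lambda/(g^a,\omega_n)$ is already finite and $p$-power torsion.

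The subtle case is $M := \Lambda/\omega_{k+1,k}^e$ for $n > k$: writing $\omega_n = \omega_{k+1,k}\,h_n$ with $h_n$ coprime to $\omega_{k+1,k}$, the short exact sequence
\begin{equation*}
0 \to \Lambda/(\omega_{k+1,k}^{e-1}, h_n) \xrightarrow{\,\cdot\,\omega_{k+1,k}\,} M/\omega_n M \to \Lambda/\omega_{k+1,k} \to 0
\end{equation*}
isolates the torsion, because $\Lambda/\omega_{k+1,k}$ is $\mathbb{Z}_p$-free. Thus $(M/\omega_n M)[p^\infty] = \Lambda/(\omega_{k+1,k}^{e-1}, h_n)$. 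Filtering $\Lambda/\omega_{k+1,k}^{e-1}$ by the submodules $\omega_{k+1,k}^i\Lambda/\omega_{k+1,k}^{e-1}$, whose graded pieces are all isomorphic to $\Lambda/\omega_{k+1,k} \simeq \mathbb{Z}_p[\zeta_{p^{k+1}}]$, the cokernel of multiplication by $h_n$ has order the $(e-1)$-th power of $|\mathbb{Z}_p[\zeta_{p^{k+1}}]/(h_n(\zeta_{p^{k+1}} - 1))|$. A direct cyclotomic evaluation of $h_n(\zeta_{p^{k+1}} - 1)$ then yields the exponent $(e-1)\deg(\omega_{k+1,k})\,n + O(1)$.

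By the explicit description of $\mathfrak{G}$ in Proposition~A.2.12, these contributions match the invariants of $\mathfrak{G}(Y)$: summands $\Lambda/p^f$ and $\Lambda/g^a$ are preserved, while $\mathfrak{G}(\Lambda/\omega_{k+1,k}^e) = \Lambda/\omega_{k+1,k}^{e-1}$ has $\mu = 0$ and $\lambda = (e-1)\deg(\omega_{k+1,k})$. Summing over summands gives $\log_p|(Y/\omega_n Y)[p^\infty]| = p^n\mu(\mathfrak{G}(Y)) + n\,\lambda(\mathfrak{G}(Y)) + O(1)$, and the isomorphism $\mathfrak{G}(Y) \simeq \sha_{K_\infty}^1(A)_p^\vee$ of Theorem~3.0.4 converts this to the claimed formula, with $\nu$ the stable value of the $O(1)$ term once $n$ exceeds the largest index $k$ occurring among the $\omega_{k+1,k}$-summands of $E(Y)$. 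The main technical obstacle is the third paragraph: since $\omega_{k+1,k} \mid \omega_n$ for $n > k$, the full quotient $M/\omega_n M$ is infinite and the classical Iwasawa formula does not apply directly, forcing the filtration-and-cyclotomic argument to extract the $p^\infty$-torsion.
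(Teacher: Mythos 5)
Your proposal is correct and takes essentially the same route as the paper: compare $\log_p|\sha_{K_n}^{1}(A)_p|$ with $\log_p\bigl|\frac{Y}{\omega_nY}[p^{\infty}]\bigr|$ for $Y=\mathrm{Sel}_{K_{\infty}}(A)_p^{\vee}$ using the bounded kernels and cokernels of $S_n^A$, establish the growth formula $\log_p\bigl|\frac{Y}{\omega_nY}[p^{\infty}]\bigr|=p^n\mu(\mathfrak{G}(Y))+n\lambda(\mathfrak{G}(Y))+\nu$, and then invoke Theorem 3.0.4 to replace $\mathfrak{G}(Y)$ by $\sha_{K_{\infty}}^{1}(A)_p^{\vee}$. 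The only difference is that the paper leaves the growth formula as a ``straightforward calculation,'' whereas you carry it out summand-by-summand on $E(Y)$; your short exact sequence isolating the torsion of $\Lambda/(\omega_{k+1,k}^{e},\omega_n)$ is exactly the device used in the appendix (Lemma A.1.3-(4) and its $\mathfrak{G}$-analogue), so your write-up simply supplies the details the paper omits.
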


\medskip

\begin{remark}
(1) This theorem can be an evidence for the control of the Tate-Shafarevich group over the tower of fields $\lbrace K_n \rbrace_{n \geq 0}$. More precisely, if the characteristic ideal of $ \sha_{K_{\infty}}^{1}(A)_p^{\vee} $ is coprime to $\omega_n$ for all $n$, (Note that by Theorem 3.0.4, $ \sha_{K_{\infty}}^{1}(A)_p^{\vee}$ is a $\Lambda$-torsion under the assumption of Theorem 4.0.1)  then by the $\Lambda$-module theory, we get $$| \sha_{K_{\infty}}^{1}(A)_p[\omega_n] |=p^{e_n}$$ for the same $e_n$ appearing in the above theorem. Hence we could expect the natural map $$\sha_{K_n}^{1}(A)_p \rightarrow \sha_{K_{\infty}}^{1}(A)_p[\omega_n]$$ to have bounded kernel and cokernel.

\smallskip

(2) This theorem generalizes \cite[Theorem 1.10]{greenberg1999iwasawa} by removing the cotorsionness assumption of $\mathrm{Sel}_{K_{\infty}}(A)_p$. If Selmer group is a $\Lambda$-cotorsion, then our formula recovers that of \cite[Theorem 1.10]{greenberg1999iwasawa}. We give a brief explanation here. 

\smallskip

In \cite[Theorem 1.10]{greenberg1999iwasawa}, the Greenberg's formula was $$| \sha_{K_n}^{1}(A)_p |=p^{f_n} \quad (n>>0)$$ for $f_n=p^n \mu_{A} + n \cdot (\lambda_{A} - \lambda_{A}^{MW}) + \nu$ where 

\begin{itemize}
\item $\mu_{A}$ is a $\mu$-invariant of the Selmer group $\mathrm{Sel}_{K_{\infty}}(A)_p^{\vee}$.
\item $\lambda_{A}$ is a $\lambda$-invariant of the Selmer group $\mathrm{Sel}_{K_{\infty}}(A)_p^{\vee}$.
\item $\lambda_{A}^{MW}$ is the stabilized value of $\lbrace\mathrm{rank}_{\mathbb{Z}}A(K_n)\rbrace_{n \geq 0}$.
\end{itemize}

\smallskip

Since $\left(A(K_{\infty}) \otimes \mathbb{Q}_p/\mathbb{Z}_p \right)^{\vee}$ is a $\Lambda$-torsion for this case, $\lambda_{A}^{MW}$ is same as the $\lambda$-invariant of $\left(A(K_{\infty}) \otimes \mathbb{Q}_p/\mathbb{Z}_p \right)^{\vee}$ by Theorem 2.1.5. Hence by the multiplicative property of characteristic ideals, we get $$\lambda_{A} - \lambda_{A}^{MW}=\lambda\left(\sha_{K_{\infty}}^{1}(A)_p^{\vee}\right).$$

For the $\mu$-invariant, by Theorem 2.1.2, $\left(A(K_{\infty}) \otimes \mathbb{Q}_p/\mathbb{Z}_p \right)^{\vee}$ has $\mu$-invariant zero. Hence we get $$\mu_{A}=\mu\left(\sha_{K_{\infty}}^{1}(A)_p^{\vee}\right)$$ which justifies the claim.\end{remark}

\medskip

\begin{proof}[Proof of $\mathrm{Theorem}$ 4.0.1] By the straight forward calculation, we can check that for a finitely generated $\Lambda$-module $Y$, we have $$\mathrm{log}_{p} |\frac{Y}{\omega_nY}[p^{\infty}]|=p^n \mu \left(\mathfrak{G}(Y) \right) + n \lambda \left(\mathfrak{G}(Y) \right) + \nu$$ for all $n>>0$.

\medskip

If we let $Y=\mathrm{Sel}_{K_{\infty}}(A)_p^{\vee}$, this gives an estimate for the group $\frac{\mathrm{Sel}_{K_{\infty}}(A)_p^{\vee}}{\omega_n \mathrm{Sel}_{K_{\infty}}(A)_p^{\vee}}[p^{\infty}] $. Since $\mathrm{Ker}(S^A_n)$ and $\mathrm{Coker}(S^A_n)$ are bounded independent of $n$, we get the desired assertion since we have isomorphisms $$\mathrm{Sel}_{K_n}(A)_p^{\vee}[p^{\infty}] \simeq \sha_{K_n}^{1}(A)_p^{\vee} \quad \mathrm{and} \quad \sha_{K_{\infty}}^{1}(A)_p^{\vee} \simeq \mathfrak{G}\left(\mathrm{Sel}_{K_{\infty}}(A)_p^{\vee}\right).$$ The second isomorphism holds due to Theorem 3.0.4.
\end{proof}

\medskip

\section{Algebraic functional equation}

\medskip

In this section, we want to compare two modules $E\left(\mathrm{Sel}_{K_{\infty}}(A)_p^{\vee}\right)$ and  $E\left(\mathrm{Sel}_{K_{\infty}}(A^t)_p^{\vee}\right)^{\iota}$ under the control of the Selmer groups of $A$ and $A^t$. The strategy of the proof is the following:

\begin{itemize}
\item By using the Greenberg-Wiles formula \cite[Theorem 2.19]{darmon1995fermat}, we compare the $\mathbb{Z}_p$-corank of Selmer groups of $A$ and $A^t$ at each finite layer $K_n$. (See section 5.1 below)
\smallskip
\item Using two functors $\mathfrak{F}$ and $\mathfrak{G}$, we can lift the duality between Selmer groups of $A$ and $A^t$ (induced by the Flach's pairing: See section 5.2) to the $\Lambda$-adic setting. 
\end{itemize}

\medskip

\subsection{The Greenberg-Wiles formula} We recall the Greenberg-Wiles formula in \cite[Theorem 2.19]{darmon1995fermat}, which compares the cardinalities of two finite Selmer groups. We define $\mathrm{Sel}_{K_n, p^m}(A)$ as the kernel of the natural restriction map $$ \displaystyle  H^1(K^S/K_n, A[p^m]) \rightarrow \prod_{\substack{v }}H^{1}(K_n, A)$$ where $v$ runs through the primes of $K_n$ over the primes in $S$.

\medskip

\begin{cor}\label{cor 8.1.3} (1) (Greenberg-Wiles formula) For a fixed $n$, $\frac{| \mathrm{Sel}_{K_n, p^m}(A) |}{| \mathrm{Sel}_{K_n, p^m}(A^t) |}$ becomes stationary as $m \rightarrow \infty$.

\medskip

(2) We have $\mathrm{corank}_{\mathbb{Z}_p} \mathrm{Sel}_{K_n}(A)_p=\mathrm{corank}_{\mathbb{Z}_p} \mathrm{Sel}_{K_n}(A^t)_p$ for all $n$.
\end{cor}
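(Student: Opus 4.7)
The plan is to derive both assertions from the Greenberg--Wiles formula of \cite[Theorem 2.19]{darmon1995fermat}, applied to the Galois module $M = A[p^m]$ and its Cartier dual $M^{*} = A^t[p^m]$ (via the Weil pairing), with local Selmer conditions $L_v = \mathrm{image}\bigl(A(K_{n,v})/p^m A(K_{n,v}) \hookrightarrow H^1(K_{n,v}, A[p^m])\bigr)$. Local Tate duality identifies $L_v^{\perp}$ with the analogous Kummer image for $A^t$, so the formula takes the explicit shape
\[
\frac{|\mathrm{Sel}_{K_n, p^m}(A)|}{|\mathrm{Sel}_{K_n, p^m}(A^t)|} \;=\; \frac{|A(K_n)[p^m]|}{|A^t(K_n)[p^m]|} \cdot \prod_{v} \frac{|A(K_{n,v})/p^m A(K_{n,v})|}{|A(K_{n,v})[p^m]|},
\]
where $v$ runs over all places of $K_n$.

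For assertion (1), I would show that every factor on the right stabilizes as $m \to \infty$. The global prefactor stabilizes because $A(K_n)_{\mathrm{tors}}$ and $A^t(K_n)_{\mathrm{tors}}$ are finite by Mordell--Weil. For the local factors, set $d = \dim A = \dim A^t$. At a place $v \mid p$, the formal group furnishes a finite-index open subgroup of $A(K_{n,v})$ topologically isomorphic to $\mathbb{Z}_p^{d[K_{n,v}:\mathbb{Q}_p]}$; the snake lemma applied to multiplication by $p^m$ yields local ratio $p^{m \cdot d[K_{n,v}:\mathbb{Q}_p]}$ for $m$ large. At an archimedean $v$, the connected component of $A(K_{n,v})$ is a real torus of dimension $d \cdot [K_{n,v}:\mathbb{R}]$ on which multiplication by the odd prime power $p^m$ is surjective with kernel of order $p^{m d[K_{n,v}:\mathbb{R}]}$, while the component group is finite; the same snake-lemma analysis gives local ratio $p^{-m \cdot d[K_{n,v}:\mathbb{R}]}$ up to a bounded constant. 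At a finite $v \nmid p$, the formal group is pro-$\ell$ with $\ell \neq p$ and the reduction is finite, so the local ratio equals $1$ for $m$ large. The numerical identity $\sum_{v \mid p}[K_{n,v}:\mathbb{Q}_p] = [K_n:\mathbb{Q}] = \sum_{v \mid \infty}[K_{n,v}:\mathbb{R}]$ now shows that the $p$-power contributions from primes above $p$ cancel those from the archimedean places exactly, so the entire product stabilizes to a finite constant.

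For assertion (2), I would compare exponential growth rates directly. The Kummer sequence at finite level gives
\[
0 \to A(K_n)/p^m A(K_n) \to \mathrm{Sel}_{K_n, p^m}(A) \to \sha^1_{K_n}(A)[p^m] \to 0,
\]
and since $\mathrm{Sel}_{K_n}(A)_p$ is a cofinitely generated $\mathbb{Z}_p$-module whose corank equals $r_A(n) := \mathrm{rank}_{\mathbb{Z}} A(K_n) + \mathrm{corank}_{\mathbb{Z}_p}\sha^1_{K_n}(A)_p$, multiplying the two growth terms gives $|\mathrm{Sel}_{K_n, p^m}(A)| \sim p^{m \cdot r_A(n)}$ up to a bounded multiplicative constant, and analogously for $A^t$. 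The stationariness from (1) then forces these exponents to agree, i.e.\ $\mathrm{corank}_{\mathbb{Z}_p}\mathrm{Sel}_{K_n}(A)_p = \mathrm{corank}_{\mathbb{Z}_p}\mathrm{Sel}_{K_n}(A^t)_p$.

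The main technical obstacle is the bookkeeping in step (1): verifying that the positive $p$-power exponents produced by the formal groups at $v \mid p$ are balanced exactly by the negative exponents coming from the compact real-torus components at archimedean $v$. This is essentially a local Euler--Poincar\'e characteristic computation, and once it is in place the remainder of the argument is a routine application of the snake lemma together with the structure theory of compact $p$-adic Lie groups and cofinitely generated $\mathbb{Z}_p$-modules.
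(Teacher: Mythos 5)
Your proposal is correct and follows essentially the same route as the paper: part (1) rests on the Greenberg--Wiles formula (you additionally carry out the local stabilization check at $v\mid p$, $v\mid\infty$ and finite $v\nmid p$ that the paper delegates to the citation of Darmon--Diamond--Taylor, Theorem 2.19), and part (2) deduces the equality of coranks by comparing the $p^m$-growth rates of the finite Selmer groups and invoking (1). The only cosmetic difference is the bookkeeping in (2): you control the deviation via the descent sequence $0 \to A(K_n)/p^m A(K_n) \to \mathrm{Sel}_{K_n,p^m}(A) \to \sha^1_{K_n}(A)[p^m] \to 0$, whereas the paper uses $0 \to \frac{A(K_n)[p^{\infty}]}{p^m A(K_n)[p^{\infty}]} \to \mathrm{Sel}_{K_n,p^m}(A) \to \mathrm{Sel}_{K_n}(A)_p[p^m] \to 0$; both yield the same bounded-constant comparison.
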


\medskip

\begin{proof} For the proof of (1), see \cite[Theorem 2.19]{darmon1995fermat}. For (2), consider the following two natural exact sequences :
\begin{align*}
0 \rightarrow \frac{A(K_n)[p^{\infty}]}{p^mA(K_n)[p^{\infty}]} \rightarrow \mathrm{Sel}_{K_n, p^m}(A) \rightarrow \mathrm{Sel}_{K_n}(A)_p[p^m] \rightarrow 0 \\
0 \rightarrow \frac{A^t(K_n)[p^{\infty}]}{p^mA^t(K_n)[p^{\infty}]} \rightarrow \mathrm{Sel}_{K_n, p^m}(A^t) \rightarrow \mathrm{Sel}_{K_n}(A^t)_p[p^m] \rightarrow 0
\end{align*}

Since $A(K_n)[p^{\infty}]$ and $A^t(K_n)[p^{\infty}]$ are finite groups, the size of the groups $\frac{A(K_n)[p^{\infty}]}{p^mA(K_n)[p^{\infty}]}, \frac{A^t(K_n)[p^{\infty}]}{p^mA^t(K_n)[p^{\infty}]}$ are of bounded order as $m$ varies. If we consider the ratio between two groups $ \mathrm{Sel}_{K_n, p^m}(A) $ and $ \mathrm{Sel}_{K_n, p^m}(A^t) $, we get the (2) from (1).
\end{proof}

\medskip

\subsection{Flach's pairing on Selmer groups}

We briefly recall properties of the pairing of Flach.

\medskip

For any finite extension $\mathfrak{k}$ of $K$ contained in $K^{S}$, Flach (\cite{flach1990generalisation}) constructed a $\mathrm{Gal}(\mathfrak{k}/K)$-equivariant bilinear pairing $$F_{\mathfrak{k}}:\mathrm{Sel}_{\mathfrak{k}}(A)_p \times \mathrm{Sel}_{\mathfrak{k}}(A^t)_p \rightarrow \mathbb{Q}_p/\mathbb{Z}_p$$ whose left kernel (resp. right kernel) is the maximal $p$-divisible subgroup of $\mathrm{Sel}_{\mathfrak{k}}(A)_p$ (resp. $\mathrm{Sel}_{\mathfrak{k}}(A^t)_p$). Here $\mathrm{Gal}(\mathfrak{k}/K)$-equivariance means the property $$ F_{\mathfrak{k}}(g \cdot x, g \cdot y)=F_{\mathfrak{k}}(x, y)  $$ for all $g \in \mathrm{Gal}(\mathfrak{k}/K)$ and $x \in \mathrm{Sel}_{\mathfrak{k}}(A)_p, y \in \mathrm{Sel}_{\mathfrak{k}}(A^t)_p$.

\medskip

If we have two finite extensions $\mathfrak{k}_1 \geq \mathfrak{k}_2$ of $K$ in $K^S$, we have the following functorial diagram:

\smallskip

\begin{center}
\begin{tikzcd}
\mathrm{Sel}_{\mathfrak{k}_2}(A)_p   & \times & \mathrm{Sel}_{\mathfrak{k}_2}(A^t)_p \arrow[dd, "\mathrm{Res}"] \arrow[rrd, "F_{\mathfrak{k}_2}"] &  &  \\
 &  &  &  & \mathbb{Q}_p/\mathbb{Z}_p \\
\mathrm{Sel}_{\mathfrak{k}_1}(A)_p \arrow[uu, "\mathrm{Cor}"] & \times & \mathrm{Sel}_{\mathfrak{k}_1}(A^t)_p  \arrow[rru, "F_{\mathfrak{k}_1}"] &  & 
\end{tikzcd}
\end{center}

\medskip

By this functoriality, we get a perfect pairing $$\displaystyle \lim_{\substack{\longleftarrow \\n}}(\mathrm{Sel}_{K_n}(A)_p)_{/div}  \times \lim_{\substack{\longrightarrow \\n}}(\mathrm{Sel}_{K_n}(A^t)_p)_{/div}  \rightarrow \mathbb{Q}_p/\mathbb{Z}_p$$ which is $\Lambda$-equivariant. Hence we have an isomorphism $$\displaystyle \lim_{\substack{\longleftarrow \\n}}(\mathrm{Sel}_{K_n}(A)_p)_{/div} \simeq  \lim_{\substack{\longleftarrow \\n}}\left(\mathrm{Sel}_{K_n}(A^t)_p^{\vee}[p^{\infty}]\right)$$ of $\Lambda$-modules.

\subsection{Proof of Theorem D}

\medskip

We first mention two technical lemmas without proof. This can be proved by using the explicit description of the functors $\mathfrak{F}$ and $\mathfrak{G}$. (See Proposition A.1.6 and Proposition A.2.12.)

\begin{lemma} (1) Let $M$ and $N$ be finitely generated torsion $\Lambda$-modules. If there are $\Lambda$-linear maps $\phi:M \rightarrow N$ and $\psi:N \rightarrow M$ with finite kernels, then $M$ and $N$ are pseudo-isomorphic.

\smallskip

(2) Let $X$ and $Y$ be finitely generated $\Lambda$-modules. Suppose that $\mathrm{rank}_{\mathbb{Z}_p}\frac{X}{\omega_{n}X}=\mathrm{rank}_{\mathbb{Z}_p}\frac{Y}{\omega_{n}Y}$ holds for all $n$, and that there are two $\Lambda$-linear maps $ \mathfrak{G}(X) \rightarrow \mathfrak{F}(Y), \mathfrak{G}(Y) \rightarrow \mathfrak{F}(X) $ with finite kernels. Then $E(X)$ and $E(Y)^{\iota}$ are isomorphic as $\Lambda$-modules. 
\end{lemma}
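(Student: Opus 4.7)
The plan is to prove (1) directly by localizing at height-one primes, and then to use (1) together with the explicit descriptions of $\mathfrak{F}$ and $\mathfrak{G}$ to deduce (2).

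For (1), I would localize at each height-one prime $\mathfrak{p}$ of $\Lambda$. Since $\ker\phi$ and $\ker\psi$ are finite $\Lambda$-modules, they are supported only at the maximal ideal $(p,T)$ and therefore vanish in $M_{\mathfrak{p}}$ and $N_{\mathfrak{p}}$. Thus $\phi_{\mathfrak{p}}$ and $\psi_{\mathfrak{p}}$ are injective $\Lambda_{\mathfrak{p}}$-linear maps. Because $M_{\mathfrak{p}}, N_{\mathfrak{p}}$ are finitely generated torsion modules over the DVR $\Lambda_{\mathfrak{p}}$, they have finite length, so the injective endomorphism $\psi_{\mathfrak{p}} \circ \phi_{\mathfrak{p}} \colon M_{\mathfrak{p}} \to M_{\mathfrak{p}}$ must be an isomorphism; this forces $\phi_{\mathfrak{p}}$ to be surjective, hence an isomorphism. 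Since $M_{\mathfrak{p}} \simeq N_{\mathfrak{p}}$ at every height-one prime, the structure theorem gives $M \sim N$.

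For (2), I first match the free parts. Combining the rank hypothesis with the formula $\mathrm{rank}_{\mathbb{Z}_p}(\Lambda/\omega_n\Lambda) = p^n$, and the fact that the $\mathbb{Z}_p$-rank of $(\text{torsion})/\omega_n$ is bounded by a constant depending on $n$ but not growing like $p^n$, the leading asymptotics in $n$ force $\mathrm{rank}_\Lambda X = \mathrm{rank}_\Lambda Y$; this matches the free summand of $E(X)$ with that of $E(Y)^{\iota}$. For the torsion parts I appeal to Propositions A.1.6 and A.2.12: the upshot of those descriptions is that $\mathfrak{G}(X)$ and $\mathfrak{F}(X)$ each record the $\Lambda$-torsion of $X$ up to the special bookkeeping of the $\omega_{k+1,k}$-factors, with $\mathfrak{F}$ incorporating the $\iota$-twist. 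Feeding the two cross-maps $\mathfrak{G}(X) \to \mathfrak{F}(Y)$ and $\mathfrak{G}(Y) \to \mathfrak{F}(X)$ with finite kernels into the pseudo-isomorphism principle of (1) --- applying $\iota$ on one side to align directions --- yields pseudo-isomorphisms identifying $E(X)_{\Lambda-\mathrm{tor}}$ with $E(Y)^{\iota}_{\Lambda-\mathrm{tor}}$.

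The hardest step is the treatment of the simple $\omega_{k+1,k}$-summands, which are annihilated by $\mathfrak{G}$ and therefore invisible through the cross-map data alone. Their multiplicities must be recovered from the rank hypothesis taken for \emph{every} $n$: the difference $\mathrm{rank}_{\mathbb{Z}_p}(X/\omega_n X) - \mathrm{rank}_{\mathbb{Z}_p}(X/\omega_{n-1} X)$, minus the contribution of the free part and of the higher-multiplicity $\omega_{k+1,k}^e$-factors (which are visible through $\mathfrak{G}$), pinpoints precisely how many $\omega_{n,n-1}$-simple summands occur. Reconciling this discrete data on the $X$-side against its $\iota$-transform on the $Y$-side, and folding it together with the pseudo-isomorphism coming from $\mathfrak{F}$ and $\mathfrak{G}$, is the delicate bookkeeping that makes (2) genuinely stronger than (1) and that I expect to be the main technical obstacle.
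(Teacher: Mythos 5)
Your proposal is correct, and it follows exactly the route the paper indicates: the paper states this lemma without proof, pointing to the explicit descriptions of $\mathfrak{F}$ and $\mathfrak{G}$ in Propositions A.1.6 and A.2.12, and your argument fills in that omitted proof along precisely those lines (localization at height-one primes for (1); then matching the free part by the $p^n$-asymptotics of the ranks, the $g$-primary and higher $\omega_{k+1,k}^{e}$-parts via the cross-maps, part (1) and $\mathfrak{F}(\cdot)^{\iota}\sim\mathfrak{G}(\cdot)$, and the multiplicity-one $\omega_{k+1,k}$-summands from the successive rank differences). The only point to write carefully is the "aligning directions" step: use Corollary A.2.13 and the symmetry of pseudo-isomorphism for finitely generated torsion modules to convert $\mathfrak{G}(Y)\to\mathfrak{F}(X)$ into a finite-kernel map $\mathfrak{F}(Y)\to\mathfrak{G}(X)$ before invoking (1).
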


\medskip

Now we state the functional equation result between $\Lambda$-adic Selmer groups of $A$ and $A^t$. As we remarked in the introduction (Remark 1.2.12), the theorem below is a generalization of \cite[Theorem 1.14]{greenberg1999iwasawa} in two directions.

\medskip

\begin{theorem} If $\mathrm{Coker}(S_{n}^{A})$ and $\mathrm{Coker}(S_{n}^{A^t})$ are finite for all $n$, then we have an isomorphism $$E\left(\mathrm{Sel}_{K_{\infty}}(A)_p^{\vee}\right)\simeq E\left(\mathrm{Sel}_{K_{\infty}}(A^t)_p^{\vee}\right)^{\iota}$$
of $\Lambda$-modules. Here $\iota$ is an involution of $\Lambda$ satisfying $\iota(T)=\frac{1}{1+T}-1$.
\end{theorem}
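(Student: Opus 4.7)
The plan is to apply Lemma 5.3.1(2) with $X := \mathrm{Sel}_{K_{\infty}}(A)_p^{\vee}$ and $Y := \mathrm{Sel}_{K_{\infty}}(A^t)_p^{\vee}$, so that its conclusion $E(X) \simeq E(Y)^{\iota}$ is precisely what we want to prove. Two hypotheses must be verified: the equality of the ranks $\mathrm{rank}_{\mathbb{Z}_p}(X/\omega_n X)$ and $\mathrm{rank}_{\mathbb{Z}_p}(Y/\omega_n Y)$ for every $n$, and the existence of $\Lambda$-linear maps $\mathfrak{G}(X) \to \mathfrak{F}(Y)$ and $\mathfrak{G}(Y) \to \mathfrak{F}(X)$ both with finite kernels.

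The rank equality comes from the Greenberg-Wiles formula. Dualizing the control map $S_n^A$ and using that $\mathrm{Ker}(S_n^A)$ is finite by Lemma 2.0.1(3) together with the hypothesis that $\mathrm{Coker}(S_n^A)$ is finite, one obtains a $\Lambda$-linear map $X/\omega_n X \to \mathrm{Sel}_{K_n}(A)_p^{\vee}$ with finite kernel and cokernel; hence $\mathrm{rank}_{\mathbb{Z}_p}(X/\omega_n X) = \mathrm{corank}_{\mathbb{Z}_p}\mathrm{Sel}_{K_n}(A)_p$, and analogously for $Y$. Corollary 5.1.1(2) then gives the equality at every layer.

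For the two maps I would use Flach's pairing, which produces the $\Lambda$-equivariant isomorphism
$$\displaystyle \lim_{\substack{\longleftarrow \\ n}} (\mathrm{Sel}_{K_n}(A)_p)_{/div} \simeq \lim_{\substack{\longleftarrow \\ n}} \mathrm{Sel}_{K_n}(A^t)_p^{\vee}[p^{\infty}]$$
recorded in Section 5.2, together with its companion obtained by exchanging $A$ and $A^t$. On the right-hand side, running the four-term exact sequence that appears in the proof of Theorem 3.0.4 (the only place where the finiteness of $\mathrm{Coker}(S_n^{A^t})$ enters) shows that there is a natural $\Lambda$-linear map from $\lim_{\leftarrow} \mathrm{Sel}_{K_n}(A^t)_p^{\vee}[p^{\infty}]$ to $\mathfrak{G}(Y) = \lim_{\leftarrow}(Y/\omega_n Y)[p^{\infty}]$ with finite kernel. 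On the left-hand side, using the explicit description of $\mathfrak{F}$ from Proposition A.1.6 combined with Lemma 2.1.4(3) applied to the dual control sequence for $A$, one obtains a $\Lambda$-linear map between $\lim_{\leftarrow}(\mathrm{Sel}_{K_n}(A)_p)_{/div}$ and $\mathfrak{F}(X)$ with finite kernel. Composing with the Flach isomorphism then gives $\mathfrak{G}(Y) \to \mathfrak{F}(X)$ with finite kernel, and the symmetric argument (swap $A \leftrightarrow A^t$) yields $\mathfrak{G}(X) \to \mathfrak{F}(Y)$ with finite kernel.

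The main obstacle is the $\mathfrak{F}$-identification on the left-hand side, since the transition maps in $\lim_{\leftarrow}(\mathrm{Sel}_{K_n}(A)_p)_{/div}$ are the corestrictions built into Flach's pairing, and these must be matched with the transition maps defining $\mathfrak{F}$ in the appendix. The $\mathfrak{G}$-side is a direct adaptation of the argument in Theorem 3.0.4 and should be essentially formal; the $\mathfrak{F}$-side requires careful bookkeeping to convert the dual control sequence for $A$, after applying the $_{/div}$ and $[p^{\infty}]$ functors via Lemma 2.1.4, into an identification with $\mathfrak{F}(X)$ up to a map with finite kernel. Once both identifications are established, Lemma 5.3.1(2) immediately yields $E\left(\mathrm{Sel}_{K_{\infty}}(A)_p^{\vee}\right) \simeq E\left(\mathrm{Sel}_{K_{\infty}}(A^t)_p^{\vee}\right)^{\iota}$, completing the proof.
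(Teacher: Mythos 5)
Your proposal follows essentially the same route as the paper's own proof: reduce to Lemma 5.3.2(2) (which you cite as 5.3.1(2)), obtain the layer-by-layer corank equality from the Greenberg--Wiles formula together with the finiteness of $\mathrm{Ker}(S_n)$ and $\mathrm{Coker}(S_n)$, and build the maps $\mathfrak{G}\to\mathfrak{F}$ by combining the Flach pairing isomorphism with the control sequences, exactly as in the paper (where the kernel finiteness comes from $\varprojlim_n \mathrm{Ker}(S_n^A)$ injecting into the finite group $\varprojlim_n A(K_{\infty})[p^{\infty}]/\omega_n A(K_{\infty})[p^{\infty}]$, and the third term of the limit sequence is $\mathfrak{F}\left(\mathrm{Sel}_{K_{\infty}}(A)_p^{\vee}\right)$ by definition). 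The bookkeeping you flag as the main obstacle is precisely what the paper carries out, so your outline is correct and matches the published argument.
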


\medskip

\begin{proof}[Proof of $\mathrm{Theorem}$ $5.3.3$] By Lemma 5.3.2, it suffices to show the following two assertions:

\begin{itemize}
\item $\mathrm{Sel}_{K_{\infty}}(A)_p[\omega_n]$ and $\mathrm{Sel}_{K_{\infty}}(A^t)_p[\omega_n]$ have same $\mathbb{Z}_p$-corank for all $n$.
\item There is a $\Lambda$-linear map $\mathfrak{G}\left(\mathrm{Sel}_{K_{\infty}}(A^t)_p^{\vee}\right) \rightarrow \mathfrak{F}\left(\mathrm{Sel}_{K_{\infty}}(A)_p^{\vee}\right)$ with the finite kernel.
\end{itemize}

The first assertion follows from our assumption about the finiteness of $\mathrm{Coker}(S_{n}^{A}), \mathrm{Coker}(S_{n}^{A^t})$ and Corollary 5.1.1. Now we prove the second statement by using Flach's pairing.\\

By the same method as the proof of Theorem 3.0.4, we have an exact sequence $$0 \rightarrow \mathrm{Coker}(S_n^{A^t})^{\vee}[p^{\infty}] \rightarrow \frac{\mathrm{Sel}_{K_{\infty}}(A^t)_p^{\vee}}{\omega_n \mathrm{Sel}_{K_{\infty}}(A^t)_p^{\vee}}[p^{\infty}] \rightarrow \mathrm{Sel}_{K_n}(A^t)_p^{\vee}[p^{\infty}] \rightarrow \mathrm{Ker}(S_n^{A^t})^{\vee}[p^{\infty}],$$  and isomorphisms $$\displaystyle \lim_{\substack{\longleftarrow \\ n}}\mathrm{Ker}(S_n^{A^t})^{\vee}[p^{\infty}]=\lim_{\substack{\longleftarrow \\ n}}\mathrm{Coker}(S_n^{A^t})^{\vee}[p^{\infty}]=0.$$
(Note that for the exact sequence, we used the finiteness of $\mathrm{Coker}(S_n^{A^t})$.) Now taking projective limit to the above sequence gives an isomorphism
\begin{align}
\displaystyle \mathfrak{G}\left(\mathrm{Sel}_{K_{\infty}}(A^t)_p^{\vee}\right) := \lim_{\substack {\longleftarrow \\ n}}\frac{\mathrm{Sel}_{K_{\infty}}(A^t)_p^{\vee}}{\omega_n \mathrm{Sel}_{K_{\infty}}(A)_p^{\vee}}[p^{\infty}] \simeq \lim_{\substack {\longleftarrow \\ n}}\mathrm{Sel}_{K_n}(A^t)_p^{\vee}[p^{\infty}].
\end{align}

\medskip

Now consider a natural exact sequence $$0 \rightarrow \mathrm{Ker}(S_n^{A}) \rightarrow \mathrm{Sel}_{K_n}(A)_p  \rightarrow \mathrm{Sel}_{K_{\infty}}(A)_p[\omega_n]  \rightarrow \mathrm{Coker}(S_n^{A}) \rightarrow 0.$$

By Lemma 2.1.4-(3) and the finiteness of the $\mathrm{Coker}(S_n^{A})$, we get another exact sequence $$\mathrm{Ker}(S_n^{A}) \rightarrow (\mathrm{Sel}_{K_n}(A)_p)_{/div} \rightarrow \left( \mathrm{Sel}_{K_{\infty}}(A)_p[\omega_n] \right) _{/div}  \rightarrow \mathrm{Coker}(S_n^{A}) \rightarrow 0  .$$ 

\smallskip

(Note that $\mathrm{Coker}(S_n^{A})$ is finite.) By taking projective limit, we get an exact sequence

\begin{align} \displaystyle 
\lim_{\substack{\longleftarrow \\n}}\mathrm{Ker}(S_n^{A}) \rightarrow \lim_{\substack{\longleftarrow \\n}}(\mathrm{Sel}_{K_n}(A)_p)_{/div} \rightarrow \lim_{\substack{\longleftarrow \\n}} (\mathrm{Sel}_{K_{\infty}}(A)_p[\omega_n])_{/div}. 
\end{align}

\medskip

\medskip

\medskip

We now analyze the three terms in this sequence (2).

\begin{itemize}
\item By the proof of Lemma  2.0.1, the first term $\displaystyle 
\lim_{\substack{\longleftarrow \\n}}\mathrm{Ker}(S_n^{A})$ injects into $\displaystyle 
\lim_{\substack{\longleftarrow \\n}} \frac{A(K_{\infty})[p^{\infty}]}{\omega
_n A(K_{\infty})[p^{\infty}]}$ which is a finite group. (This follows from the structure theorem of the $\Lambda$-modules and Lemma 2.0.1-(1).) 
\smallskip
\item The middle term in (2) is isomorphic to $\displaystyle \lim_{\substack {\longleftarrow \\ n}}\left(\mathrm{Sel}_{K_n}(A^t)_p^{\vee}[p^{\infty}]\right)$ by the remark mentioned before this subsection (The functorial property of the Flach's pairing), which is also isomorphic to $\mathfrak{G}\left(\mathrm{Sel}_{K_{\infty}}(A^t)_p^{\vee}\right)$ by (1).
\smallskip
\item Lastly, the third term in (2) is isomorphic to $\mathfrak{F}\left(\mathrm{Sel}_{K_{\infty}}(A)_p^{\vee}\right)$ by definition.
\end{itemize}

\medskip

Hence the sequence (2) becomes a $\Lambda$-linear map $\mathfrak{G}\left(\mathrm{Sel}_{K_{\infty}}(A^t)_p^{\vee}\right) \rightarrow \mathfrak{F}\left(\mathrm{Sel}_{K_{\infty}}(A)_p^{\vee}\right)$ with the finite kernel. 
\end{proof}

\medskip

\begin{prop} Suppose that $\mathrm{Coker}(S_{n}^{A})$, $\mathrm{Coker}(S_{n}^{A^t})$ are finite for all $n$. If $\sha_{K_n}^{1}(A)_p$ is finite for all $n$, then we have isomorphisms 
$$ E\left( \sha_{K_{\infty}}^{1}(A)_p^{\vee} \right) \simeq E\left( \sha_{K_{\infty}}^{1}(A^t)_p^{\vee} \right)^{\iota} $$
and
$$ E\left( (A(K_{\infty}) \otimes_{\mathbb{Z}_p} \mathbb{Q}_p/\mathbb{Z}_p)^{\vee} \right) \simeq E\left( (A^t(K_{\infty}) \otimes_{\mathbb{Z}_p} \mathbb{Q}_p/\mathbb{Z}_p)^{\vee} \right)^{\iota} $$
of $\Lambda$-modules.
\end{prop}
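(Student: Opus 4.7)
The plan is to bootstrap the proposition from Theorem D (Theorem 5.3.3) and Corollary 3.0.6, which together pin down $E(\sha_{K_{\infty}}^{1}(A)_p^{\vee})$ and $E((A(K_{\infty}) \otimes \mathbb{Q}_p/\mathbb{Z}_p)^{\vee})$ in terms of $E(\mathrm{Sel}_{K_{\infty}}(A)_p^{\vee})$. As a preliminary step, I would note that the finiteness hypothesis on $\sha_{K_n}^{1}(A)_p$ transfers to $\sha_{K_n}^{1}(A^t)_p$ via the Cassels--Tate pairing, so Corollary 3.0.6 applies equally to $A$ and $A^t$.

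For the $\sha$ statement, apply Theorem 3.0.4 to both abelian varieties to obtain $\sha_{K_{\infty}}^{1}(A)_p^{\vee} \simeq \mathfrak{G}(\mathrm{Sel}_{K_{\infty}}(A)_p^{\vee})$ and similarly for $A^t$. The key compatibility I would verify is that $\mathfrak{G}$ commutes with the $\iota$-twist, i.e.\ $\mathfrak{G}(M^{\iota}) \simeq \mathfrak{G}(M)^{\iota}$ for any finitely generated $\Lambda$-module $M$. This holds because $\iota(\omega_n) = (1+T)^{-p^n} - 1 = -\,\omega_n \cdot (1+T)^{-p^n}$ is a unit multiple of $\omega_n$ in $\Lambda$, so $\omega_n \cdot M^{\iota}$ coincides with $\omega_n \cdot M$ as a subgroup, and taking $p^{\infty}$-torsion of the quotient and the inverse limit is insensitive to the twist on the action. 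Combined with Theorem D and the fact that $\mathfrak{G}$ preserves pseudo-isomorphism (noted in Section~2.1), this chains into
\[
E(\sha_{K_{\infty}}^{1}(A)_p^{\vee}) \;\simeq\; \mathfrak{G}(E(\mathrm{Sel}_{K_{\infty}}(A)_p^{\vee})) \;\simeq\; \mathfrak{G}(E(\mathrm{Sel}_{K_{\infty}}(A^t)_p^{\vee})^{\iota}) \;\simeq\; E(\sha_{K_{\infty}}^{1}(A^t)_p^{\vee})^{\iota},
\]
where the outer identifications use that $\mathfrak{G}$ applied to an elementary module is itself an elementary module by the explicit description in Proposition A.2.12.

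For the Mordell--Weil statement, I would invoke Corollary 3.0.6: $E((A(K_{\infty}) \otimes \mathbb{Q}_p/\mathbb{Z}_p)^{\vee})$ consists only of a free part $\Lambda^r$ together with cyclic factors of the shape $\Lambda/\omega_{k+1,k}$, and these are read off directly from the free part and the $\omega$-part of $E(\mathrm{Sel}_{K_{\infty}}(A)_p^{\vee})$; the same recipe applies to $A^t$. Since $\iota$ fixes $\Lambda$ up to isomorphism and fixes each cyclic summand $\Lambda/\omega_{k+1,k}$ up to isomorphism (because $\iota(\omega_{k+1,k})$ is a unit multiple of $\omega_{k+1,k}$), the $\iota$-twist of Theorem~D preserves the partition of $E(\mathrm{Sel}_{K_{\infty}}(A^t)_p^{\vee})$ into its free/$\omega$-part and the remaining $\sha$-part. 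Matching the free/$\omega$-parts on both sides then delivers the second isomorphism.

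The main technical hurdle is the identity $\mathfrak{G}(M^{\iota}) \simeq \mathfrak{G}(M)^{\iota}$ and the parallel claim that each cyclic quotient $\Lambda/\omega_{k+1,k}^e$ is $\iota$-self-dual as an abstract $\Lambda$-module; both collapse to the fact that the principal ideal $(\omega_{k+1,k})$ is $\iota$-stable. Everything beyond this is bookkeeping on uniquely-determined elementary factors and direct invocation of the results quoted above.
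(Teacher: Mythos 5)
Your proposal is correct and follows essentially the same route as the paper, whose proof is simply to combine Theorem 5.3.3 with Corollary 3.0.6 (after transferring the finiteness of $\sha_{K_n}^{1}(A)_p$ to $\sha_{K_n}^{1}(A^t)_p$). The extra details you supply --- that $\iota(\omega_{k+1,k})$ is a unit multiple of $\omega_{k+1,k}$, hence $\mathfrak{G}(M^{\iota})\simeq\mathfrak{G}(M)^{\iota}$ and the $\iota$-twist respects the free/$\omega$-part versus torsion-coprime-to-$\omega_n$ decomposition --- are exactly the compatibilities the paper leaves implicit, so they strengthen rather than alter the argument.
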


\medskip

We remark here that the finiteness of $\sha_{K_n}^{1}(A)_p$ implies the finiteness of $\sha_{K_n}^{1}(A^t)_p$ by \cite[Lemma I.7.1]{milne2006arithmetic}.

\medskip

\begin{proof} This follows from Corollary 3.0.6 and Theorem 5.3.3.
\end{proof}

\medskip

As a consequence of Theorem 5.3.3, we can compare the size of $\sha_{K_n}^{1}(A)_p$ and $\sha_{K_n}^{1}(A^t)_p$ over the tower of fields $ \lbrace K_n \rbrace_{n \geq 0}$.

\medskip

\begin{theorem} Suppose that $\mathrm{Coker}(S_{n}^{A})$, $\mathrm{Coker}(S_{n}^{A^t})$ are finite and bounded independent of $n$. If $\sha_{K_n}^{1}(A)_p$ are finite for all $n$, then the ratios $\frac{| \sha_{K_n}^{1}(A)_p |}{| \sha_{K_n}^{1}(A^t)_p |} $ and $\frac{| \sha_{K_n}^{1}(A^t)_p |}{| \sha_{K_n}^{1}(A)_p |} $ are bounded independent of $n$.
\end{theorem}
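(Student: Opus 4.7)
The plan is to read this as a clean corollary of Theorem 4.0.1 (applied to both $A$ and $A^t$) combined with the functional-equation Proposition 5.3.4.

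First I would verify that the hypotheses of Theorem 4.0.1 are met for both $A$ and $A^t$. For $A$ itself this is given. For $A^t$, the bounded-cokernel hypothesis on $S_n^{A^t}$ is part of the assumption, and the finiteness of $\sha_{K_n}^1(A^t)_p$ follows from the finiteness of $\sha_{K_n}^1(A)_p$ via the Cassels--Tate type duality recalled as \cite[Lemma I.7.1]{milne2006arithmetic} (this is the same remark used just before the proof of Proposition 5.3.4). Hence Theorem 4.0.1 yields integers $\nu, \nu'$ and formulas
\begin{align*}
\log_p|\sha_{K_n}^{1}(A)_p| &= p^n\,\mu\!\left(\sha_{K_{\infty}}^{1}(A)_p^{\vee}\right) + n\,\lambda\!\left(\sha_{K_{\infty}}^{1}(A)_p^{\vee}\right) + \nu,\\
\log_p|\sha_{K_n}^{1}(A^t)_p| &= p^n\,\mu\!\left(\sha_{K_{\infty}}^{1}(A^t)_p^{\vee}\right) + n\,\lambda\!\left(\sha_{K_{\infty}}^{1}(A^t)_p^{\vee}\right) + \nu',
\end{align*}
valid for $n\gg 0$.

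Next I would invoke the first isomorphism of Proposition 5.3.4, namely
\[
E\!\left(\sha_{K_{\infty}}^{1}(A)_p^{\vee}\right) \simeq E\!\left(\sha_{K_{\infty}}^{1}(A^t)_p^{\vee}\right)^{\iota},
\]
and observe that the involution $\iota\colon T\mapsto \tfrac{1}{1+T}-1$ preserves both the $\mu$- and the $\lambda$-invariants: $\iota$ fixes $p$, and it sends any distinguished irreducible polynomial $g$ to a unit multiple of a distinguished polynomial of the same degree, so each factor $\Lambda/g^e$ contributes equally to $\lambda$ under $\iota$, while each factor $\Lambda/p^f$ is stable. Consequently
\[
\mu\!\left(\sha_{K_{\infty}}^{1}(A)_p^{\vee}\right)=\mu\!\left(\sha_{K_{\infty}}^{1}(A^t)_p^{\vee}\right),\qquad
\lambda\!\left(\sha_{K_{\infty}}^{1}(A)_p^{\vee}\right)=\lambda\!\left(\sha_{K_{\infty}}^{1}(A^t)_p^{\vee}\right).
\]

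Subtracting the two displayed formulas then gives
\[
\log_p\frac{|\sha_{K_n}^{1}(A)_p|}{|\sha_{K_n}^{1}(A^t)_p|}=\nu-\nu'\qquad(n\gg 0),
\]
a constant independent of $n$; combined with the fact that only finitely many small $n$ remain to be absorbed into the bound, both ratios $|\sha_{K_n}^1(A)_p|/|\sha_{K_n}^1(A^t)_p|$ and its reciprocal are bounded independently of $n$. There is no real obstacle here --- the only delicate point is the invariance of $\mu,\lambda$ under $\iota$, which is a formal check on the elementary module, so the main content of the theorem is absorbed by Theorem 4.0.1 and Proposition 5.3.4.
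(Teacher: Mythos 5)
Your proposal is correct and follows essentially the same route as the paper: the paper's proof likewise deduces the equalities of $\mu$- and $\lambda$-invariants from Proposition 5.3.4 (with the finiteness of $\sha_{K_n}^{1}(A^t)_p$ supplied by the Milne duality remark) and then applies the estimate of Theorem 4.0.1 to both $\sha_{K_n}^{1}(A)_p$ and $\sha_{K_n}^{1}(A^t)_p$. Your only addition is to spell out the (correct) formal check that the involution $\iota$ preserves $\mu$ and $\lambda$, which the paper leaves implicit.
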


\medskip

\begin{proof} By Proposition 5.3.4, we have 
\begin{align*}
\mu\left(\sha_{K_{\infty}}^{1}(A)_p^{\vee}\right)&=\mu\left(\sha_{K_{\infty}}^{1}(A^t)_p^{\vee}\right) \\
\lambda\left(\sha_{K_{\infty}}^{1}(A)_p^{\vee}\right)&=\lambda\left(\sha_{K_{\infty}}^{1}(A^t)_p^{\vee}\right).
\end{align*}

Now applying the estimate of Theorem 4.0.1 to both groups $\sha_{K_n}^{1}(A)_p$ and  $\sha_{K_n}^{1}(A^t)_p $ gives the desired assertion.
\end{proof}

\appendix
\renewcommand{\thetheorem}{\Alph{section}.\arabic{subsection}.\arabic{theorem}}

\section{Functors $\mathfrak{F}$ and $\mathfrak{G}$}

\subsection{Functor $\mathfrak{F}$}

We first recall our convention on Pontryagin dual. For a locally compact Hausdorff continuous $\Lambda$-module $M$, we define $M^{\vee}:=\text{Hom}_{cts}(M, \mathbb{Q}_p/\mathbb{Z}_p)$ which is also a locally compact Hausdorff. $M^{\vee}$ becomes a continuous $\Lambda$-module via action defined by $(f\cdot \phi)(m):=\phi(\iota(f)\cdot m)$ where $f \in \Lambda$, $m \in M$, $\phi \in M^{\vee}$.

\medskip

If $M, N$ are two locally compact Hausdorff continuous $\Lambda$-modules with a perfect pairing $P:M \times N \rightarrow \mathbb{Q}_p/\mathbb{Z}_p$ satisfying $P(f \cdot x, y)=P(x, \iota(f) \cdot y)$, then $P$ induces $\Lambda$-module isomorphisms $M \simeq N^{\vee}$ and $N \simeq M^{\vee}$.

\medskip

\begin{definition}\label{definition A.2.4} For any finitely generated $\Lambda$-module $X$, define $\displaystyle \mathfrak{F}(X)=\left(\lim_{\substack {\longrightarrow  \\ n}} \frac{X}{\omega_{n}X} [p^{\infty}]\right)^{\vee}$. Here direct limit is taken with respect to norm maps $\frac{X}{\omega_{n}X} \xrightarrow{\times\frac{\omega_{n+1}}{\omega_{n}}} \frac{X}{\omega_{n+1}X}$. $\mathfrak{F}$ is contravariant and preserves finite direct sums.
\end{definition}

Our goal is examining $\Lambda$-module structure of $\mathfrak{F}(X)$ for a finitely generated $\Lambda$-module $X$ (Proposition A.1.6). We will use the following proposition crucially.

\begin{lemma}
\label{lemma A.2.5}
(1) Let $M$ be a finitely generated $\Lambda$-module, and let $\lbrace \pi_{n}\rbrace$ be a sequence of non-zero elements of $\Lambda$ such that $\pi_0 \in m, \pi_{n+1} \in \pi_{n}m,  \frac{M}{\pi_{n}M}$ is finite for all $n$ where $m$ is the maximal ideal of $\Lambda$. Then we have an isomorphism $$\displaystyle \left(\lim_{\substack{\longrightarrow \\ n}} \frac{M}{\pi_{n} M}\right)^{\vee}   \simeq \mathrm{Ext}^1_{\Lambda}(M, \Lambda)^{\iota}$$ as $\Lambda$-modules.

\medskip

(2) For a finitely generated $\Lambda$-module $X$, we have an isomorphism $\displaystyle \lim_{\substack{\longleftarrow \\ n}}T_{p}(X^{\vee}[\omega_n]) \simeq \mathrm{Hom}_{\Lambda}(X, \Lambda)^{\iota}$.
\end{lemma}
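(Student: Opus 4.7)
The plan is to prove both parts via a preliminary local-duality statement for $\Lambda$ viewed as a two-dimensional regular local ring, combined with the long exact $\mathrm{Ext}$ sequence for part (1) and Hom-tensor adjunction for part (2). The preliminary I would establish first is: for any finite $\Lambda$-module $N$, there is a natural isomorphism $N^\vee\simeq\mathrm{Ext}^2_\Lambda(N,\Lambda)^{\iota}$. This is essentially Matlis duality over $\Lambda$; the $\iota$-twist is forced by the convention $(f\cdot\phi)(m)=\phi(\iota(f)m)$ used in the paper for the Pontryagin dual. A sanity check at $N=\Lambda/(p,T-a)$, using the Koszul resolution, shows that $\mathrm{Ext}^2(N,\Lambda)$ has $T$ acting as $a$ while $N^\vee$ has $T$ acting as $\iota(T)$ evaluated at $a$, matching precisely.

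For part (1), apply $\mathrm{Hom}_\Lambda(-,\Lambda)$ to $0\to M\xrightarrow{\pi_n}M\to M/\pi_nM\to 0$. Because $M/\pi_nM$ is finite, $\mathrm{Hom}_\Lambda(M/\pi_nM,\Lambda)=0$, and since $\Lambda$ has global dimension $2$, the long exact sequence collapses to
\[
0\to \mathrm{Ext}^1_\Lambda(M,\Lambda)/\pi_n \to \mathrm{Ext}^2_\Lambda(M/\pi_nM,\Lambda) \to \mathrm{Ext}^2_\Lambda(M,\Lambda)[\pi_n]\to 0.
\]
Substituting $\mathrm{Ext}^2_\Lambda(M/\pi_nM,\Lambda)\simeq((M/\pi_nM)^\vee)^\iota$ from the preliminary and $\iota$-twisting the whole sequence yields $0\to(\mathrm{Ext}^1_\Lambda(M,\Lambda)/\pi_n)^\iota \to (M/\pi_nM)^\vee \to (\mathrm{Ext}^2_\Lambda(M,\Lambda)[\pi_n])^\iota\to 0$. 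Passing to $\varprojlim_n$: the left limit is $\mathrm{Ext}^1_\Lambda(M,\Lambda)^{\iota}$ because $\{\pi_n\}$ is cofinal in the $m$-adic topology and $\mathrm{Ext}^1_\Lambda(M,\Lambda)$ is $m$-adically complete; the right limit vanishes because $\mathrm{Ext}^2_\Lambda(M,\Lambda)$ has finite length (pseudo-null and finitely generated over a two-dimensional regular ring), so the transition maps, which amount to multiplication by $\pi_{n+1}/\pi_n\in m$, are eventually zero. A Mittag-Leffler check handles $\varprojlim^{1}$, and we obtain $(\varinjlim_n M/\pi_nM)^\vee\simeq\mathrm{Ext}^1_\Lambda(M,\Lambda)^\iota$.

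For part (2), the argument is more direct. Since $X^\vee[\omega_n]=(X/\omega_nX)^\vee$ and $X/\omega_nX$ is finitely generated over $\mathbb{Z}_p$ (as $\Lambda/\omega_n$ is $\mathbb{Z}_p$-free of rank $p^n$), one has $T_p(X^\vee[\omega_n])\simeq\mathrm{Hom}_{\mathbb{Z}_p}(X/\omega_nX,\mathbb{Z}_p)$ as $\mathbb{Z}_p$-modules, with the $\Lambda$-action inheriting an $\iota$-twist from the Pontryagin convention. Hom-tensor adjunction rewrites the right-hand side as $\mathrm{Hom}_\Lambda(X,\mathrm{Hom}_{\mathbb{Z}_p}(\Lambda/\omega_n,\mathbb{Z}_p))$; identifying $\Lambda/\omega_n\simeq\mathbb{Z}_p[\Gamma/\Gamma^{p^n}]$ and invoking the standard self-duality of the dual of a finite group algebra (whose $\Gamma$-action on the dual is the contragredient, i.e.\ the $\iota$-twist) supplies $\mathrm{Hom}_{\mathbb{Z}_p}(\Lambda/\omega_n,\mathbb{Z}_p)\simeq(\Lambda/\omega_n)^\iota$. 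Taking $\varprojlim_n$ and using that $\mathrm{Hom}_\Lambda(X,-)$ commutes with inverse limits yields $\mathrm{Hom}_\Lambda(X,\Lambda)^\iota$ after collecting the accumulated twists.

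The main obstacle I foresee is the consistent bookkeeping of $\iota$-twists, which enter from several independent sources: the Pontryagin-dual convention of the paper, the Matlis-to-Pontryagin comparison in the preliminary lemma, the contragredient action from group-ring self-duality, and the direction of the transition maps in the direct/inverse systems. A subordinate but nontrivial issue in (1) is verifying that $\mathrm{Ext}^2_\Lambda(M,\Lambda)$ has finite length, so its inverse-limit contribution genuinely vanishes; this is standard but rests on the structural fact that pseudo-null finitely generated modules over the two-dimensional regular local ring $\Lambda$ have finite length.
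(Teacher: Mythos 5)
Your plan for (1) contains a step that fails under the stated hypotheses: the sequence $0 \to M \xrightarrow{\pi_n} M \to M/\pi_nM \to 0$ is exact only when $\pi_n$ acts injectively on $M$, and nothing in the hypotheses guarantees this. They force $M$ to be $\Lambda$-torsion with characteristic ideal prime to every $\pi_n$, so $M[\pi_n]$ is finite, but it need not vanish: any nonzero finite submodule of $M$ (and $M$ itself may be finite, e.g.\ $M=\Lambda/m$) is killed by $\pi_n$ for $n\gg 0$ since $\pi_n\in m^{n+1}$. So your long exact sequence starts from a sequence that is not exact, and the collapsed three-term sequence acquires a correction coming from $M[\pi_n]$. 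The statement survives because both sides of (1) are insensitive to finite submodules --- $\mathrm{Ext}^1_\Lambda(M,\Lambda)\simeq\mathrm{Ext}^1_\Lambda(M/M_{\mathrm{fin}},\Lambda)$, and $\varinjlim_n M_{\mathrm{fin}}/\pi_nM_{\mathrm{fin}}=0$ because the transition maps are multiplication by $\pi_{n+1}/\pi_n\in m$, eventually zero on a finite module --- so you must first reduce to the case where $M$ has no nonzero finite submodule (then $M[\pi_n]$, a finite submodule of $M$, is zero); this reduction is missing. A second inaccuracy: the identification $\varprojlim_n \mathrm{Ext}^1(M,\Lambda)/\pi_n\simeq\mathrm{Ext}^1(M,\Lambda)$ is not a cofinality statement, since $(\pi_n)$ lies in $m^{n+1}$ but need not contain any power of $m$ (take $\pi_n=p^{n+1}$); the correct argument uses that the $\pi_n$-filtration is finer than the $m$-adic one, Krull's intersection theorem on each quotient $\mathrm{Ext}^1(M,\Lambda)/\pi_n$, and completeness. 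Finally, the compatibility of your exact sequences with the transition maps, and the duality $N^\vee\simeq\mathrm{Ext}^2_\Lambda(N,\Lambda)^{\iota}$ for finite $N$ (which is the real content, of the same depth as the result being proved), are asserted rather than established; note the paper does not prove (1) at all but cites Neukirch--Schmidt--Wingberg, Prop.\ 5.5.6.

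For (2) your route is genuinely different from the paper's and is viable: the paper deduces (2) from (1) applied to $M=X/\omega_nX$ with $\pi_m=p^m$, using $\mathrm{Ext}^1_\Lambda(X/\omega_nX,\Lambda)\simeq\mathrm{Hom}_\Lambda(X,\Lambda/\omega_n)$ and a limit, whereas you avoid $\mathrm{Ext}$ via $\mathbb{Z}_p$-linear duality, adjunction, and self-duality of $\Lambda/\omega_n$. Two cautions. With the usual module action $(f\psi)(x)=\psi(fx)$ on $\mathrm{Hom}_{\mathbb{Z}_p}(\Lambda/\omega_n,\mathbb{Z}_p)$, the Gorenstein (group-algebra) self-duality is untwisted, and the single $\iota$ in the final answer comes solely from the paper's Pontryagin convention; if you additionally insert a contragredient $\iota$ at the group-algebra step, as your sketch suggests, the two twists cancel and you land on the untwisted $\mathrm{Hom}_\Lambda(X,\Lambda)$. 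You must also check that multiplication by $\omega_{n+1}/\omega_n$ on $X^\vee[\omega_{n+1}]$ corresponds, under all the identifications, to the natural projection $\Lambda/\omega_{n+1}\to\Lambda/\omega_n$; this is exactly what legitimizes $\varprojlim_n\mathrm{Hom}_\Lambda(X,\Lambda/\omega_n)\simeq\mathrm{Hom}_\Lambda(X,\Lambda)$ and is the kind of bookkeeping you flag but do not carry out.
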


\begin{proof} We only need to prove (2) since (1) is \cite[Proposition 5.5.6]{neukirch2000cohomology}. By (1), we have an isomorphism $T_{p}(X^{\vee}[\omega_n]) \simeq \mathrm{Ext}^1_{\Lambda}(\frac{X}{\omega_n X}, \Lambda)^{\iota}$ and this last group is isomorphic to $$\text{Hom}_{\Lambda}(\frac{X}{\omega_n X}, \frac{\Lambda}{\omega_n \Lambda})^{\iota} \simeq \text{Hom}_{\Lambda}(X, \frac{\Lambda}{\omega_n \Lambda})^{\iota}.$$ Hence $\displaystyle \lim_{\substack{\longleftarrow \\ n}}T_{p}(X^{\vee}[\omega_n]) \simeq \displaystyle \lim_{\substack{\longleftarrow \\ n}}\text{Hom}_{\Lambda}(X, \frac{\Lambda}{\omega_n \Lambda})^{\iota} \simeq \mathrm{Hom}_{\Lambda}(X, \Lambda)^{\iota}$.
\end{proof}

\begin{lemma} \label{lemma A.2.6}
(1) If $K$ is finite, then $ \mathfrak{F}(K)=0 $

(2) $\mathfrak{F}(\Lambda)=0$

(3) If $g$ is a prime element of $\Lambda$ coprime to $\omega_{n}$ for all $n \geq 0$, then $\mathfrak{F}(\frac{\Lambda}{g^{e}})=\frac{\Lambda}{\iota(g)^e}$ for all $e \geq 1$

(4) \begin{equation*}
\mathfrak{F}(\frac{\Lambda}{\omega_{m+1, m} ^{e}})=\left\{
\begin{array}{cc}
\frac{\Lambda}{\iota(\omega_{m+1, m})^{e-1}} & e \geq 2,\\[5pt]
0 & e=1.
\end{array}
\right.
\end{equation*}
\end{lemma}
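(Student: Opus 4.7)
The plan is to reduce every case to Lemma~\ref{lemma A.2.5}(1), which identifies $\bigl(\varinjlim M/\pi_n M\bigr)^{\vee}$ with $\mathrm{Ext}^1_\Lambda(M,\Lambda)^\iota$ under the hypothesis that every $M/\pi_n M$ is finite. The definition of $\mathfrak{F}(X)$ carries an extra $[p^\infty]$, but whenever $X/\omega_n X$ is itself finite that torsion operation is vacuous and $\mathfrak{F}(X)=\mathrm{Ext}^1_\Lambda(X,\Lambda)^\iota$ by direct appeal to Lemma~\ref{lemma A.2.5}(1). I will dispose of (1)--(3) in this way and keep case (4), which is the only genuinely new computation, for the end.

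For (1), any finite $\Lambda$-module $K$ has $K/\omega_n K$ finite, so the lemma gives $\mathfrak{F}(K)\cong\mathrm{Ext}^1_\Lambda(K,\Lambda)^\iota$. A short d\'evissage through the composition series of $K$ reduces the vanishing of $\mathrm{Ext}^1_\Lambda(K,\Lambda)$ to the case $K=\mathbb{F}_p=\Lambda/(p,T)$, and for this one uses the Koszul resolution associated with the regular sequence $(p,T)$ of length $2$: a direct check that $\ker((T,-p))=\mathrm{im}((p,T)^{\mathrm{T}})$ shows $\mathrm{Ext}^1_\Lambda(\mathbb{F}_p,\Lambda)=0$ (this is a special case of local duality in the $2$-dimensional regular local ring $\Lambda$). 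For (2), $\Lambda/\omega_n\Lambda$ is $\mathbb{Z}_p$-free of rank $p^n$, hence carries no $p$-torsion, so $\mathfrak{F}(\Lambda)=0$ straight from the definition. For (3), the coprimality of $g$ with every $\omega_n$ makes $\Lambda/(g^e,\omega_n)$ finite, Lemma~\ref{lemma A.2.5}(1) applies, and the free resolution $0\to\Lambda\xrightarrow{g^e}\Lambda\to\Lambda/g^e\to 0$ computes $\mathrm{Ext}^1_\Lambda(\Lambda/g^e,\Lambda)\cong\Lambda/g^e$; after twisting by $\iota$ we obtain $\Lambda/\iota(g)^e$.

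Case (4) is the real content and is where I expect the main obstacle. Write $f=\omega_{m+1,m}$ and, for $n\geq m+1$, $\omega_n=fh_n$ where $h_n=\omega_n/f$ is a distinguished polynomial coprime to $f$. When $e=1$ the module $X=\Lambda/f$ is a discrete valuation ring, in particular $\mathbb{Z}_p$-torsion free, and $\omega_n X=0$ for $n\geq m+1$; thus $(X/\omega_n X)[p^\infty]=0$ for all large $n$ and $\mathfrak{F}(X)=0$. When $e\geq 2$ the quotient $X/\omega_n X$ has positive $\mathbb{Z}_p$-rank for $n\geq m+1$, so the $[p^\infty]$ cannot be ignored; the crucial step is to show that the $\mathbb{Z}_p$-saturation of $\omega_n X$ inside $X=\Lambda/f^e$ is exactly $fX$. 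One inclusion uses that the ideal $(f^{e-1},h_n)$ has finite colength, hence contains some $p^N$, so every element of $fX$ is hit by $p^N$ times something in $fh_n X$; the other uses that $f$ is distinguished (coprime to $p$), so $p^N b\in f(f^{e-1},h_n)$ forces $f\mid b$.

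Granting this identification, the natural isomorphism $fX\cong\Lambda/f^{e-1}$ (dividing by the non-zerodivisor $f$) gives $(X/\omega_n X)[p^\infty]\cong\Lambda/(f^{e-1},h_n)$, and a quick check shows that the norm transition on the left corresponds to multiplication by $\omega_{n+1,n}=h_{n+1}/h_n$ on the right. The system $\{\Lambda/(f^{e-1},h_n)\}_{n\geq m+1}$ is then \emph{precisely} the setup of Lemma~\ref{lemma A.2.5}(1) applied to $M=\Lambda/f^{e-1}$ with auxiliary sequence $\pi_n=h_n$: one verifies $h_n\in\mathfrak{m}$, $h_{n+1}=\omega_{n+1,n}h_n\in h_n\mathfrak{m}$, and $M/h_n M$ finite. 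Applying the lemma and dualising yields $\mathfrak{F}(\Lambda/f^e)\cong\mathrm{Ext}^1_\Lambda(\Lambda/f^{e-1},\Lambda)^\iota\cong\Lambda/\iota(f)^{e-1}$, completing case (4). The identification of the saturation is the delicate point: it is the unique place in the whole proof where the specific form of $\omega_{m+1,m}$ as a distinguished polynomial (as opposed to a general prime) enters in an essential way.
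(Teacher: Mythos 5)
Your proof is correct and takes essentially the same route as the paper: every case is funneled through Lemma \ref{lemma A.2.5}-(1), and in case (4) your identification of the $\mathbb{Z}_p$-saturation of $\omega_n X$ in $X=\Lambda/f^{e}$ with $fX$ (giving $(X/\omega_nX)[p^{\infty}]\cong \Lambda/(f^{e-1},\omega_n/f)$, with transition maps $\times\,\omega_{n+1,n}$) is exactly the content of the paper's exact sequence $0 \to \Lambda/(f^{e-1},\omega_n/f) \xrightarrow{\times f} \Lambda/(f^{e},\omega_n) \to \Lambda/f \to 0$, after which both arguments conclude via $\mathrm{Ext}^1_{\Lambda}(\Lambda/f^{e-1},\Lambda)^{\iota}\cong \Lambda/\iota(f)^{e-1}$. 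The only difference is cosmetic: the paper treats (1) as standard and omits it, whereas you verify the vanishing of $\mathrm{Ext}^1_{\Lambda}(K,\Lambda)$ by d\'evissage and the Koszul resolution of $\Lambda/(p,T)$.
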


\smallskip

\begin{proof}
For (2), $\frac{\Lambda}{\omega_{n}\Lambda}$ is $\mathbb{Z}_p$-torsion-free so $\mathfrak{F}(\Lambda)=0$. For (3), note that $\frac{\Lambda}{(g^{e}, \omega_{n})}$ is finite if $g$ is a prime element of $\Lambda$ coprime to $\omega_{n}$ for all $n \geq 0$. So by Lemma \ref{lemma A.2.5}-(1), we get $$\mathfrak{F}\left(\frac{\Lambda}{g^{e}}\right) \simeq \text{Ext}_{\Lambda}^{1}(\frac{\Lambda}{g^{e}}, \Lambda)^{\iota}   \simeq \frac{\Lambda}{\iota(g)^{e}}.$$

For (4), we only prove for $e \geq 2$. Let $h=\omega_{m+1, m}$ and consider the following exact sequence for $n \geq m+1$:
$$0 \rightarrow \frac{\Lambda}{(h^{e-1}, \frac{\omega_{n}}{h})}\xrightarrow{\times h} \frac{\Lambda}{(h^{e}, \omega_{n})} \rightarrow \frac{\Lambda}{h} \rightarrow 0.$$ Since $\frac{\Lambda}{h}$ is $\mathbb{Z}_p$ torsion-free, we get $\frac{\Lambda}{(h^{e-1}, \frac{\omega_{n}}{h})}[p^{\infty}] \simeq \frac{\Lambda}{(h^{e}, \omega_{n})}[p^{\infty}]$. So we get \begin{align*}
\displaystyle \mathfrak{F}(\frac{\Lambda}{h^e})&=\left(\lim_{\substack{\longrightarrow \\ n}}\frac{\Lambda}{(h^{e}, \omega_{n})} [p^{\infty}] \right)^{\vee}=\left( \lim_{\substack{\longrightarrow \\ n}}\frac{\Lambda}{(h^{e-1}, \frac{\omega_{n}}{h})} [p^{\infty}] \right)^{\vee}\\
&=\left( \lim_{\substack{\longrightarrow \\ n}} \frac{\Lambda}{(h^{e-1}, \frac{\omega_{n}}{h})}\right)^{\vee} \quad \quad \text{ (Since $h$ is coprime to $\frac{\omega_{n}}{h}$ for all $n \geq m+1$ ) }\\
&\simeq \text{Ext}^{1}_{\Lambda}(\frac{\Lambda}{h^{e-1}}, \Lambda)^{\iota} \quad \quad \text{ (By Lemma \ref{lemma A.2.5}-(1)) }\\
&\simeq \frac{\Lambda}{\iota(h)^{e-1}}
\end{align*}
which shows the assertion.
\end{proof}

\begin{lemma}\label{lemma A.2.7} Let $0 \rightarrow K \rightarrow X \xrightarrow{\phi}  Z \rightarrow 0$ be a short exact sequence of finitely generated $\Lambda$ modules where $K$ is finite. Then the natural map $\mathfrak{F}(\phi) : \mathfrak{F}(Z) \rightarrow \mathfrak{F}(X)$ is an isomorphism.
\end{lemma}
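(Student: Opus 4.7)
The plan is to reduce the assertion to showing that the underlying map
\[
\varinjlim_n (X/\omega_n X)[p^\infty] \longrightarrow \varinjlim_n (Z/\omega_n Z)[p^\infty]
\]
is an isomorphism; the conclusion then follows by Pontryagin duality. I would tensor the short exact sequence $0 \to K \to X \to Z \to 0$ with $\Lambda/\omega_n$; since $\omega_n$ is a non-zero-divisor in $\Lambda$ one has $\mathrm{Tor}^\Lambda_1(Z, \Lambda/\omega_n) \simeq Z[\omega_n]$, yielding
\[
Z[\omega_n] \xrightarrow{\delta_n} K/\omega_n K \longrightarrow X/\omega_n X \longrightarrow Z/\omega_n Z \to 0.
\]
Letting $B_n$ denote the image of $K/\omega_n K$ in $X/\omega_n X$, this assembles into a short exact sequence $0 \to B_n \to X/\omega_n X \to Z/\omega_n Z \to 0$. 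Because $K$ is a finite $p$-group, so is $B_n$, and the same argument as in Lemma 2.1.4-(2) gives
\[
0 \to B_n \to (X/\omega_n X)[p^\infty] \to (Z/\omega_n Z)[p^\infty] \to 0.
\]

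Next I would pass to the direct limit under the transition maps (multiplication by $\omega_{n+1,n}$), which is exact since filtered colimits of modules are exact; it therefore suffices to verify that $\varinjlim_n B_n = 0$. As $B_n$ is a quotient of $K/\omega_n K$ and direct limits preserve surjectivity, this reduces further to checking $\varinjlim_n K/\omega_n K = 0$. The key point is that $K$ is a finite $\Lambda$-module, so $\gamma := 1+T$ acts through a finite $p$-subgroup of $\mathrm{Aut}(K)$; hence $\gamma^{p^N} = 1$ on $K$ for some $N$, equivalently $\omega_N$ annihilates $K$. For $n \geq N$ this gives $K/\omega_n K = K$, and the transition $\omega_{n+1,n} = 1 + \gamma^{p^n} + \cdots + \gamma^{(p-1)p^n}$ reduces modulo $\omega_n$ to multiplication by $p$, because $\gamma^{p^n} \equiv 1 \pmod{\omega_n}$. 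The colimit of the system $K \xrightarrow{p} K \xrightarrow{p} K \to \cdots$ equals $K[\tfrac{1}{p}] = 0$ since $K$ is a finite $p$-group.

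Combining these steps produces the asserted isomorphism $\varinjlim_n (X/\omega_n X)[p^\infty] \xrightarrow{\sim} \varinjlim_n (Z/\omega_n Z)[p^\infty]$, and Pontryagin dualizing gives $\mathfrak{F}(\phi) : \mathfrak{F}(Z) \xrightarrow{\sim} \mathfrak{F}(X)$. I expect the only subtle step to be isolating the correct correction term $B_n$ (rather than naively working with $K/\omega_n K$, which may receive a nontrivial image from $Z[\omega_n]$) and checking that its transition maps are compatible with those of the two outer terms; once the correction term is identified and its colimit is seen to vanish by the $\gamma^{p^n} \equiv 1$ observation, the rest is formal.
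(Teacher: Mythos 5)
Your proof is correct and follows essentially the same route as the paper: isolate the image $B_n$ of $K/\omega_nK$ in $X/\omega_nX$, use the short exact sequence $0 \to B_n \to X/\omega_nX \to Z/\omega_nZ \to 0$ together with Lemma 2.1.4-(2), and conclude by showing the colimit coming from $K$ vanishes. The only difference is that where the paper invokes its Lemma A.1.6-(1) ($\mathfrak{F}(K)=0$ for finite $K$), you prove this vanishing directly via the observation that $\omega_N$ annihilates $K$ for large $N$ and the transition maps eventually act as multiplication by $p$, which is a welcome addition since the paper leaves that case unproved.
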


\begin{proof}
Since the tensor functor is right exact, we have $\frac{K}{\omega_{n}K} \rightarrow \frac{X}{\omega_{n}X} \rightarrow \frac{Z}{\omega_{n}Z} \rightarrow 0.$

Let $A_{n}, B_{n}$ be the kernel and image of $\frac{K}{\omega_{n}K} \rightarrow \frac{X}{\omega_{n}X}$, recpectively. Note that both are finite modules since $K$ is finite. 
We have the following two short exact sequences
\begin{align}
0 \rightarrow A_{n} \rightarrow \frac{K}{\omega_{n}K} \rightarrow B_{n} \rightarrow 0   \\
0 \rightarrow B_{n} \rightarrow \frac{X}{\omega_{n}X} \rightarrow \frac{Z}{\omega_{n}Z} \rightarrow 0
\end{align}

By Lemma \ref{lemma A.1.3}-(2), two sequences remain exact after taking $p^{\infty}$-torsion parts. Since $\mathfrak{F}(K)=0$ by Lemma A.1.3, we get $\displaystyle (\lim_{\substack{\longrightarrow \\ n}} B_{n}[p^{\infty}])^{\vee} =0 $ from (3). Applying this to (4) gives an isomorphism $$\mathfrak{F}(\phi):\mathfrak{F}(Z)\xrightarrow{\mathfrak{F}(\phi)} \mathfrak{F}(X).$$
\end{proof}

\begin{lemma}\label{lemma A.2.8}
Let $0 \rightarrow Z \xrightarrow{\psi} S \rightarrow C \rightarrow 0 $ be a short exact sequence of finitely generated $\Lambda$ modules where $C$ is finite. Then the natural map $\mathfrak{F}(\psi):\mathfrak{F}(S) \rightarrow \mathfrak{F}(Z)$ is an injection with finite cokernel.

\end{lemma}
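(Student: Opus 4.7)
My plan is to run the long exact sequence of Tor for the functor $-\otimes_\Lambda \Lambda/\omega_n$ applied to $0\to Z\to S\to C\to 0$, then take $p^\infty$-torsion, direct limit, and Pontryagin dual, and finally isolate the two error terms.

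First, since $\Lambda/\omega_n$ has projective dimension $1$ over $\Lambda$ with resolution $0\to\Lambda\xrightarrow{\omega_n}\Lambda\to\Lambda/\omega_n\to 0$, tensoring with a $\Lambda$-module $M$ gives $\mathrm{Tor}_1^\Lambda(M,\Lambda/\omega_n)\simeq M[\omega_n]$ and $\mathrm{Tor}_i=0$ for $i\geq 2$. Applying this to $0\to Z\to S\to C\to 0$ yields a four-term exact sequence
\[ 0\to C[\omega_n]\to Z/\omega_n Z\to S/\omega_n S\to C/\omega_n C\to 0. \]
Because $C$ is a finite $\Lambda$-module and $\Lambda$ is local with residue field $\mathbb{F}_p$, $C$ is a finite $p$-group; in particular both $C[\omega_n]$ and $C/\omega_n C$ are finite $p$-groups of bounded order (bounded by $|C|$). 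Splitting into two short exact sequences and applying Lemma~2.1.4(2) twice gives
\[ 0\to C[\omega_n]\to (Z/\omega_n Z)[p^\infty]\to (S/\omega_n S)[p^\infty]\to C/\omega_n C\to 0. \]

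Next, I pass to the direct limit over $n$ with transition maps $\times\omega_{n+1}/\omega_n$. Since direct limits are exact, and since Pontryagin duality is exact and contravariant, after dualizing I obtain
\[ 0\to \bigl(\textstyle\lim_{\longrightarrow} C/\omega_n C\bigr)^{\vee}\to \mathfrak{F}(S)\to \mathfrak{F}(Z)\to \bigl(\textstyle\lim_{\longrightarrow} C[\omega_n]\bigr)^{\vee}\to 0, \]
where I used the definition of $\mathfrak{F}$ and the fact that, since $C/\omega_n C$ is a finite $p$-group, $(C/\omega_n C)[p^\infty]=C/\omega_n C$. The middle map is precisely $\mathfrak{F}(\psi)$, so it remains to control the two outer terms.

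For the left-hand term, the same observation that $C/\omega_n C$ is a finite $p$-group gives $\mathfrak{F}(C)=(\lim_{\longrightarrow} C/\omega_n C)^{\vee}$, and Lemma~A.1.3(1) asserts $\mathfrak{F}(C)=0$. Hence the left-hand term vanishes and $\mathfrak{F}(\psi)$ is injective. For the right-hand term, I need to show that $\lim_{\longrightarrow} C[\omega_n]$ is finite, where the transition maps are those induced on Tor by $\Lambda/\omega_n\xrightarrow{\omega_{n+1}/\omega_n}\Lambda/\omega_{n+1}$. Lifting this to a map between Koszul resolutions — one can take the middle vertical arrow to be multiplication by $\omega_{n+1}/\omega_n$ and the leftmost one to be the identity — shows that the induced map $C[\omega_n]\to C[\omega_{n+1}]$ under the identification with $\mathrm{Tor}_1$ is simply the inclusion. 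Thus $\lim_{\longrightarrow} C[\omega_n]=\bigcup_n C[\omega_n]\subseteq C$ is finite, and so is its Pontryagin dual, which is $\mathrm{Coker}(\mathfrak{F}(\psi))$.

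The step that requires care is the identification of the transition map on $C[\omega_n]$; once one writes down the explicit chain map between the two Koszul resolutions, the fact that the connecting map becomes the inclusion $C[\omega_n]\hookrightarrow C[\omega_{n+1}]$ is immediate. Everything else is a bookkeeping application of exactness of direct limits and Pontryagin duality, combined with the previously established vanishing $\mathfrak{F}(C)=0$ for finite $C$.
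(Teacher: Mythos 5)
Your overall route --- the Tor long exact sequence for $-\otimes_\Lambda \Lambda/\omega_n$, taking $p^\infty$-torsion, passing to the direct limit, dualizing, and using $\varinjlim C/\omega_nC=0$ together with the finiteness of $\varinjlim C[\omega_n]$ --- is exactly the paper's argument (the paper phrases the four-term sequence via the snake lemma), and your explicit identification of the transition maps on $\mathrm{Tor}_1^\Lambda(C,\Lambda/\omega_n)=C[\omega_n]$ as the inclusions is a detail the paper leaves implicit. However, two of your exactness claims are false as stated. First, the four-term sequence does not begin with $0$: injectivity of the connecting map $C[\omega_n]\to Z/\omega_nZ$ would require $S[\omega_n]\to C[\omega_n]$ to vanish, which it need not. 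Take $S=\Lambda/T\Lambda\simeq\mathbb{Z}_p$ (so $\omega_n$ acts as $\omega_n(0)=0$), $Z=pS$ and $C=S/Z\simeq\mathbb{Z}/p$: then $S[\omega_n]=S$ surjects onto $C[\omega_n]=C$, the connecting map is zero, and indeed $\mathbb{Z}/p$ cannot inject into the torsion-free module $Z/\omega_nZ\simeq\mathbb{Z}_p$. Second, ``applying Lemma~2.1.4(2) twice'' is not legitimate: for the second short exact sequence $0\to D_n\to S/\omega_nS\to C/\omega_nC\to 0$ (with $D_n$ the image of $Z/\omega_nZ$) the submodule $D_n$ is in general infinite, and in the same example $(S/\omega_nS)[p^\infty]=0$ does not surject onto $C/\omega_nC=\mathbb{Z}/p$; so your displayed torsion sequence, and hence the dualized five-term sequence, is wrong at both ends.

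Both defects are repairable, and the repair is precisely what the paper's proof does: it introduces $X_n=\mathrm{Ker}(C[\omega_n]\to Z/\omega_nZ)$ and $E_n=\mathrm{Im}(C[\omega_n]\to Z/\omega_nZ)$ exactly because the connecting map need not be injective, applies the finiteness lemma only to $0\to E_n\to Z/\omega_nZ\to D_n\to 0$, and uses mere left-exactness of the $p^\infty$-torsion functor on $0\to D_n\to S/\omega_nS\to C/\omega_nC\to 0$. This still gives a sequence exact at the two middle terms, namely $E_n\to (Z/\omega_nZ)[p^\infty]\to (S/\omega_nS)[p^\infty]\to (C/\omega_nC)[p^\infty]$; after the (exact) direct limit the last term vanishes and the kernel of the middle map is $\varinjlim E_n$, a quotient of the finite group $\varinjlim C[\omega_n]\subseteq C$, so dualizing yields the injectivity of $\mathfrak{F}(\psi)$ with cokernel embedded in a finite group, which is the assertion. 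In short, the strategy is fine and matches the paper; the gap lies in the exactness bookkeeping you dismissed as routine, which must be done with images rather than with $C[\omega_n]$ itself.
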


\begin{proof}
By the snake lemma, we have
$C[\omega_{n}] \rightarrow \frac{Z}{\omega_{n}Z} \rightarrow \frac{S}{\omega_{n}S} \rightarrow \frac{C}{\omega_{n}C} \rightarrow 0$.

\medskip

If we let $X_{n}=\text{Ker}(C[\omega_{n}] \rightarrow \frac{Z}{\omega_{n}Z}), E_{n}=\text{Im}(C[\omega_{n}] \rightarrow \frac{Z}{\omega_{n}Z}), D_{n}=\text{Ker}(\frac{S}{\omega_{n}S} \rightarrow \frac{C}{\omega_{n}C})$, then we get the following three short exact sequences:
\begin{align}0 \rightarrow X_{n} \rightarrow C[\omega_{n}] \rightarrow E_{n} \rightarrow 0 \\
0 \rightarrow E_{n} \rightarrow \frac{Z}{\omega_{n}Z} \rightarrow D_{n} \rightarrow 0 \\
0 \rightarrow D_{n} \rightarrow \frac{S}{\omega_{n}S} \rightarrow \frac{C}{\omega_{n}C} \rightarrow 0
\end{align}

Since $C$ is finite, $X_{n}$ and $E_{n}$ are finite for all $n$. So the sequences $(5), (6)$ remain exact after taking $p^{\infty}$-torsion parts due to Lemma \ref{lemma A.1.3}-(2). On the other hand, we have $\displaystyle\lim_{\substack{\longrightarrow \\ n}} \frac{C}{\omega_{n}C} =0$ and $\displaystyle\lim_{\substack{\longrightarrow \\ n}} C[\omega_{n}]=C$. So by taking direct limit and Pontryagin dual for the sequences $(5), (6), (7)$, we get $$0 \rightarrow \mathfrak{F}(S) \rightarrow \mathfrak{F}(Z) \rightarrow C^{\vee}.$$
\end{proof}

\medskip

Now we can prove our main goal.

\medskip

\begin{prop}\label{prop A.2.9}
Let $X$ be a finitely generated $\Lambda$-module. If we let $$\displaystyle E(X)  \simeq \Lambda^{r} 
\oplus \left(\bigoplus_{i=1}^{d} \frac{\Lambda}{g_{i}^{l_{i}}}\right) 
\oplus \left(\bigoplus_{\substack{m=1 \\ e_{1} \cdots e_{f} \geq 2}}^{f} \frac{\Lambda}{\omega_{a_{m}+1, a_{m}}^{e_{m}}} \right)
\oplus \left(\bigoplus_{n=1}^{t} \frac{\Lambda}{\omega_{b_{n}+1, b_{n}}}\right) $$ where $r \geq 0$, $g_{1}, \cdots g_{d}$ are prime elements of $\Lambda$ which are coprime to $\omega_{n}$ for all $n$, $d \geq 0$, $l_{1}, \cdots, l_{d} \geq 1$, $f \geq 0$, $e_{1}, \cdots, e_{f} \geq 2$ and $t \geq 0$, then we have an injection $$\displaystyle \left(\bigoplus_{i=1}^{d} \frac{\Lambda}{\iota(g_{i})^{l_{i}}}\right)\oplus \left(\bigoplus_{\substack{m=1 \\ e_{1} \cdots e_{f} \geq 2}}^{f} \frac{\Lambda}{\iota(\omega_{a_{m}+1, a_{m}})^{e_{m}-1}} \right) \hookrightarrow \mathfrak{F}(X)$$ with finite cokernel. In particular, $F(X)$ is a finitely generated $\Lambda$-torsion module.
\end{prop}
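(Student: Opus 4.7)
The plan is to reduce the computation of $\mathfrak{F}(X)$ for a general finitely generated $\Lambda$-module $X$ to the computation on its elementary module $E(X)$, using the structure theorem together with the two preparatory lemmas about how $\mathfrak{F}$ behaves under short exact sequences with finite outer terms.

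First, I fix a $\Lambda$-linear pseudo-isomorphism $\phi \colon X \to E(X)$ provided by the structure theorem, and factor it as the composition
$$X \twoheadrightarrow X/\ker(\phi) \hookrightarrow E(X),$$
giving two short exact sequences
$$0 \to \ker(\phi) \to X \to X/\ker(\phi) \to 0, \qquad 0 \to X/\ker(\phi) \to E(X) \to \mathrm{coker}(\phi) \to 0,$$
in which both $\ker(\phi)$ and $\mathrm{coker}(\phi)$ are finite. Applying Lemma A.1.4 (finite kernel forces an isomorphism on $\mathfrak{F}$) to the first sequence yields $\mathfrak{F}(X/\ker(\phi)) \xrightarrow{\sim} \mathfrak{F}(X)$, while applying Lemma A.1.5 (finite cokernel forces an injection on $\mathfrak{F}$ with finite cokernel) to the second sequence gives $\mathfrak{F}(E(X)) \hookrightarrow \mathfrak{F}(X/\ker(\phi))$ with cokernel embedded in $\mathrm{coker}(\phi)^{\vee}$. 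Composing produces an injection $\mathfrak{F}(E(X)) \hookrightarrow \mathfrak{F}(X)$ with finite cokernel; note that the direction is consistent because $\mathfrak{F}$ is contravariant.

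Next I compute $\mathfrak{F}(E(X))$ explicitly. Since $\mathfrak{F}$ preserves finite direct sums by construction, I may apply the case-by-case calculations of Lemma A.1.3 to each indecomposable summand of $E(X)$. The free summands $\Lambda^{r}$ contribute nothing, the summands $\Lambda/\omega_{b_n+1,b_n}$ (case $e=1$) also contribute nothing, the summands $\Lambda/g_i^{l_i}$ contribute $\Lambda/\iota(g_i)^{l_i}$, and the summands $\Lambda/\omega_{a_m+1,a_m}^{e_m}$ with $e_m \geq 2$ contribute $\Lambda/\iota(\omega_{a_m+1,a_m})^{e_m-1}$. Assembling these yields precisely the direct sum claimed in the statement, realized as $\mathfrak{F}(E(X))$, and the injection from the previous paragraph becomes the desired injection with finite cokernel. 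The ``in particular'' clause then follows because each displayed summand of $\mathfrak{F}(E(X))$ is cyclic torsion.

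The main obstacle is essentially absent at this point, since all the analytic content has been absorbed by Lemmas A.1.3, A.1.4 and A.1.5. The one point that requires a small amount of care is to verify that factoring the pseudo-isomorphism through $\mathrm{im}(\phi)$ actually puts us in a position to apply exactly these two lemmas (rather than some variant involving long exact sequences with multiple finite terms); this is why I separate the two short exact sequences above, rather than treating $\phi$ in a single step.
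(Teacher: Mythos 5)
Your proposal is correct and follows essentially the same route as the paper: factor the pseudo-isomorphism $X \to E(X)$ through its image, apply Lemma A.1.4 to the finite-kernel sequence and Lemma A.1.5 to the finite-cokernel sequence, and compute $\mathfrak{F}(E(X))$ summand by summand via Lemma A.1.3. Nothing is missing.
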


\begin{proof}
By the structure theorem of the finitely generated $\Lambda$-modules, we have an exact sequence $$0 \rightarrow K \rightarrow X \rightarrow E(X) \rightarrow C \rightarrow 0$$ where $K, C$ are finite modules. Let $Z$ be the image of $X \longrightarrow E(X)$. Now Lemma \ref{lemma A.2.6}, Lemma \ref{lemma A.2.7}, Lemma \ref{lemma A.2.8} applied to two short exact sequences $0 \rightarrow K \rightarrow X \rightarrow Z \rightarrow 0$ and $0 \rightarrow Z \rightarrow E(X) \rightarrow C \rightarrow 0$ give the desired assertion. 
\end{proof}

\medskip

\begin{cor}\label{cor A.2.10} Let $X$ be a finitely generated $\Lambda$-module and let $X_{\Lambda-\mathrm{tor}}$ be the maximal $\Lambda$-torsion submodule of $X$. If characteristic ideal of $X_{\Lambda-\mathrm{tor}}$ is coprime to $\omega_n$ for all $n$, then there is a pseudo-isomorphism $\phi:X \rightarrow \Lambda^{\mathrm{rank}_{\Lambda} X} \oplus \mathfrak{F}(X)^{\iota}$. If we assume additionally that $X$ does not have any non-trivial finite submodules, then $\phi$ is an injection.
\end{cor}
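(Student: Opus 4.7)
The plan is to combine the structure theorem for finitely generated $\Lambda$-modules with the explicit computation of $\mathfrak{F}$ in Proposition A.1.6.

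First, I would unpack the hypothesis on the characteristic ideal. Writing $E(X)$ in the shape given in Proposition A.1.6, its $\Lambda$-torsion part contributes $\prod_i g_i^{l_i} \cdot \prod_m \omega_{a_m+1, a_m}^{e_m} \cdot \prod_n \omega_{b_n+1, b_n}$ to $\mathrm{char}_\Lambda\bigl(X_{\Lambda-\mathrm{tor}}\bigr)$. Coprimality of this ideal with every $\omega_n$ forces the two $\omega$-type products to be empty, so $E(X) \simeq \Lambda^r \oplus \bigoplus_i \Lambda/g_i^{l_i}$ where $r = \mathrm{rank}_\Lambda X$ and each $g_i$ is a prime element of $\Lambda$ coprime to every $\omega_n$.

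Next, I would feed this simplified shape into Proposition A.1.6 (taking $f = t = 0$) to obtain an injection $\bigoplus_i \Lambda/\iota(g_i)^{l_i} \hookrightarrow \mathfrak{F}(X)$ with finite cokernel. Because $\iota$ is an involution on $\Lambda$, applying the $\iota$-twist yields an injection $\bigoplus_i \Lambda/g_i^{l_i} \hookrightarrow \mathfrak{F}(X)^{\iota}$ with finite cokernel.

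Finally, I would assemble $\phi$ by composition. The structure theorem supplies a pseudo-isomorphism $\alpha : X \to E(X) \simeq \Lambda^r \oplus \bigoplus_i \Lambda/g_i^{l_i}$ whose kernel is a finite submodule of $X$; when $X$ has no nontrivial finite submodule, $\ker(\alpha)$ is automatically trivial, so $\alpha$ is injective. Composing $\alpha$ with the map $\beta: \Lambda^r \oplus \bigoplus_i \Lambda/g_i^{l_i} \to \Lambda^r \oplus \mathfrak{F}(X)^{\iota}$ that is the identity on $\Lambda^r$ and the injection just produced on the torsion summand, one obtains $\phi := \beta \circ \alpha$; this map has finite kernel (inherited from $\alpha$) and finite cokernel (inherited from $\beta$), so it is a pseudo-isomorphism. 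Under the no-finite-submodule hypothesis, both $\alpha$ and $\beta$ are injective, hence $\phi$ is injective too. The only real care needed is keeping the $\iota$-twist consistent throughout (so that the target is genuinely $\mathfrak{F}(X)^{\iota}$ rather than $\mathfrak{F}(X)$), but this is routine bookkeeping since $\iota$ is an involution.
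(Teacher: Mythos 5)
Your proof is correct and is essentially the argument the paper intends: the corollary is stated as an immediate consequence of Proposition A.1.6, and your route --- the coprimality hypothesis eliminates the $\omega_{m+1,m}$-type factors of $E(X)$, Proposition A.1.6 (with $f=t=0$, twisted by $\iota$) gives an injection of the torsion part of $E(X)$ into $\mathfrak{F}(X)^{\iota}$ with finite cokernel, and composing with the structure-theorem map $X \to E(X)$, whose finite kernel must vanish when $X$ has no nontrivial finite submodule --- is exactly that derivation. The bookkeeping with $\iota$ (namely $(\Lambda/\iota(g)^{l})^{\iota}\simeq \Lambda/g^{l}$ and the fact that twisting preserves injectivity and finiteness of cokernels) is handled correctly.
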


\medskip

\subsection{Functor $\mathfrak{G}$}

Now we consider another functor $\mathfrak{G}$.

\begin{definition}\label{definition A.3.11} For a finitely generated $\Lambda$-module $X$, define $\displaystyle \mathfrak{G}(X)=\lim_{\substack{\longleftarrow \\ n}}\left(\frac{X}{\omega_{n}X}[p^{\infty}]\right)$. Note that $\mathfrak{G}$ is covariant and preserves finite direct sums.
\end{definition}

\medskip

\begin{lemma}\label{lemma A.3.12}

(1) If $\frac{X}{\omega_{n}X}$ is finite for all $n$, then $\mathfrak{G}(X) \simeq X$. In particular, $\mathfrak{G}(\frac{\Lambda}{g^{e}}) \simeq \frac{\Lambda}{g^{e}}$ if $g$ is coprime to $\omega_{n}$ for all $n$.

(2) $\mathfrak{G}(\Lambda)=0.$

(3) \begin{equation*}
\mathfrak{G}(\frac{\Lambda}{\omega_{m+1, m}^{e}})=\left\{
\begin{array}{cc}
\frac{\Lambda}{\omega_{m+1, m}^{e-1}} & e \geq 2,\\[5pt]
0 & e=1.
\end{array}
\right.
\end{equation*}
\end{lemma}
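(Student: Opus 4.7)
My plan is to handle the three claims separately, computing the $p^\infty$-torsion of $X/\omega_n X$ explicitly for each model module and then passing to the inverse limit. Part (2) is immediate: since $\omega_n=(1+T)^{p^n}-1$ is a distinguished polynomial, $\Lambda/\omega_n$ is a free $\mathbb{Z}_p$-module of rank $p^n$, has no $p$-torsion, and hence $\mathfrak{G}(\Lambda)=0$.

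For part (1), I would first observe that any finite $\Lambda$-module is annihilated by a power of the maximal ideal $m=(p,T)$ by Nakayama, hence by a power of $p$; so under the hypothesis each $X/\omega_n X$ equals its own $p^\infty$-torsion and $\mathfrak{G}(X)\simeq \varprojlim_n X/\omega_n X$. To identify this limit with $X$, I compare the $\omega_n$- and $m$-adic topologies on $X$: expanding $\omega_n = \sum_{k\ge 1}\binom{p^n}{k}T^k$ shows $\omega_n \in m^{n+1}$, so $\omega_n X \subseteq m^{n+1}X$, while the finiteness of $X/\omega_n X$ supplies, for each $n$, some $k$ with $m^k X \subseteq \omega_n X$. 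Thus the two filtrations are cofinal, and $X$—being $m$-adically complete as a finitely generated module over the complete Noetherian local ring $\Lambda$—equals $\varprojlim_n X/\omega_n X$. Specialising to $X=\Lambda/g^e$ with $g$ coprime to every $\omega_n$ then reduces to the standard fact that two coprime elements of the two-dimensional regular local ring $\Lambda$ cut out an ideal of finite colength.

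For part (3), set $h=\omega_{m+1,m}$. For $n\ge m+1$, $h\mid \omega_n$, so we may write $\omega_n = hu_n$ with $u_n\in\Lambda$ coprime to $h$; a direct calculation yields $(h^e,\omega_n)=h\cdot(h^{e-1},u_n)$. Since multiplication by $h$ is injective on $\Lambda/(h^{e-1},u_n)$, this gives a short exact sequence
\[
0 \to \Lambda/(h^{e-1},u_n) \xrightarrow{\cdot h} \Lambda/(h^e,\omega_n) \to \Lambda/h \to 0.
\]
The quotient $\Lambda/h$ is $\mathbb{Z}_p$-free (since $h$ is distinguished irreducible), so passing to $p^\infty$-torsion identifies $(\Lambda/(h^e,\omega_n))[p^\infty]$ with the finite module $\Lambda/(h^{e-1},u_n)$. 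For $e=1$ this is $\Lambda/(1,u_n)=0$, giving $\mathfrak{G}(\Lambda/h)=0$. For $e\ge 2$, the divisibility $u_n\mid u_{n+1}$ makes the transition maps in the inverse system the natural surjections $\Lambda/(h^{e-1},u_{n+1})\twoheadrightarrow \Lambda/(h^{e-1},u_n)$; and the factorisation $u_n = T\prod_{1\le j\le n,\,j\ne m+1}\omega_{j,j-1}$ combined with $\omega_{j,j-1}(0)=p\in m$ shows each factor lies in $m$, so $u_n\in m^n$. Hence the images $(\bar u_n)\subseteq \Lambda/h^{e-1}$ lie in $m^n$, and the inverse limit becomes the $m$-adic completion, which equals $\Lambda/h^{e-1}$.

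The main technical point I expect to need care with is the final identification $\varprojlim_n \Lambda/(h^{e-1},u_n)\simeq \Lambda/h^{e-1}$. Given a compatible system $(x_n)$, the condition $x_{n+1}-x_n\in(\bar u_n)\subseteq m^n$ makes it $m$-adically Cauchy in $\Lambda/h^{e-1}$, hence convergent to some $x$ by $m$-adic completeness; but to conclude $x-x_n\in (\bar u_n)$ one must know that each principal ideal $(\bar u_n)$ is $m$-adically closed in $\Lambda/h^{e-1}$, which follows from Artin--Rees in the complete Noetherian setting. An analogous closedness/cofinality issue lurks in part (1), though there the finiteness hypothesis on $X/\omega_n X$ makes the comparison of topologies transparent.
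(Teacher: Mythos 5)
Your proof is correct and takes essentially the same route as the paper: the same $\mathbb{Z}_p$-freeness observation for (2), the completeness/cofinality identification that underlies the paper's terse proof of (1), and for (3) the same short exact sequence $0 \to \Lambda/(h^{e-1},\tfrac{\omega_n}{h}) \xrightarrow{\times h} \Lambda/(h^{e},\omega_n) \to \Lambda/h \to 0$ followed by passage to $p^{\infty}$-torsion and to the inverse limit. You merely spell out the final limit identification (cofinality of the $u_n$-filtration with the $m$-adic one plus $m$-adic completeness) that the paper leaves implicit; once cofinality is noted, the Artin--Rees closedness concern is not needed.
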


\begin{proof}
For (1), since $\frac{X}{\omega_{n}X}$ is finite for all $n$, we get $\displaystyle \mathfrak{G}(X)=\lim_{\substack {\longleftarrow \\ n}}\frac{X}{\omega_{n}X}$ which is isomorphic to $X$. Since $\frac{\Lambda}{\omega_{n}\Lambda}$ is $\mathbb{Z}_p$-torsion-free, $\mathfrak{G}(\Lambda)=0$. Proof of (3) is almost same as that of Lemma \ref{lemma A.2.6}-(4).
\end{proof}

\medskip

\begin{lemma}\label{lemma A.3.13}
Let $0 \rightarrow K \rightarrow X \rightarrow Z \rightarrow 0$ be a short exact sequence of finitely generated $\Lambda$-modules where $K$ is finite. Then we have a short exact sequence $0 \rightarrow K \simeq \mathfrak{G}(K) \rightarrow \mathfrak{G}(X) \rightarrow \mathfrak{G}(Z) \rightarrow 0 $.
\end{lemma}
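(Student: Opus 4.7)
My plan is to apply the snake lemma to multiplication by $\omega_n$ on the short exact sequence $0 \to K \to X \to Z \to 0$ and then pass to the inverse limit over $n$. The snake lemma produces the six-term sequence
\[
0 \to K[\omega_n] \to X[\omega_n] \to Z[\omega_n] \xrightarrow{\delta_n} K/\omega_n K \to X/\omega_n X \to Z/\omega_n Z \to 0,
\]
so the task is to control the image $D_n := \operatorname{im}(\delta_n)$ and the $\omega_n$-action on $K$ as $n \to \infty$.

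The first observation is that $\omega_n K = 0$ for all sufficiently large $n$. Since $K$ is a finitely generated $\Lambda$-module of finite cardinality, it is annihilated by some power $(p,T)^N$ of the maximal ideal of $\Lambda$; on the other hand, inspecting the coefficients of $\omega_n = (1+T)^{p^n}-1$ shows $\omega_n \in (p,T)^{n+1}$, so once $n+1 \geq N$ we get $\omega_n K = 0$, and in particular $K[\omega_n] = K/\omega_n K = K$. Next, unwinding the snake construction identifies $D_n$, for such large $n$, with $K \cap \omega_n X$ inside $X$. Because $\omega_{n+1}X \subseteq \omega_n X$ and $\bigcap_n \omega_n X \subseteq \bigcap_n (p,T)^{n+1} X = 0$ by Krull's intersection theorem applied to the Noetherian local ring $\Lambda$ and the finitely generated module $X$, the decreasing sequence $\{K \cap \omega_n X\}$ of subgroups of the finite group $K$ must stabilize at $0$; hence $D_n = 0$ eventually.

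Combining these two facts, for $n$ large enough the snake sequence collapses to a short exact sequence
\[
0 \to K \to X/\omega_n X \to Z/\omega_n Z \to 0,
\]
and Lemma 2.1.4(2), applied with the finite module $K$, yields
\[
0 \to K \to (X/\omega_n X)[p^\infty] \to (Z/\omega_n Z)[p^\infty] \to 0.
\]
Passing to the inverse limit over $n$ then produces the required exact sequence: the left-hand system is eventually constant equal to $K$ with identity transition maps, so it trivially satisfies Mittag--Leffler, the relevant $\varprojlim^1$ vanishes, and the limit is exact. Using the identification $\mathfrak{G}(K) \cong K$ from Lemma A.3.12(1) finishes the argument,
\[
0 \to K \simeq \mathfrak{G}(K) \to \mathfrak{G}(X) \to \mathfrak{G}(Z) \to 0.
\]
The main (mild) obstacle is the eventual vanishing of $\delta_n$: it requires both $\omega_n \in (p,T)^{n+1}$ to kill $K$ itself and Krull's intersection theorem to force $K \cap \omega_n X = 0$. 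Once these are in place, the inverse-limit step is automatic, because the kernel stabilizes to an eventually constant finite system.
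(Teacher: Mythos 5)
Your proof is correct, and its skeleton is the same as the paper's: apply the snake lemma to multiplication by $\omega_n$ on $0\to K\to X\to Z\to 0$, use the finiteness lemma (Lemma 2.1.4-(2)) to pass to $p^{\infty}$-torsion parts, and then take the projective limit, where Mittag--Leffler for the (finite) kernel system gives exactness on the right. The one place you genuinely diverge is in handling the connecting homomorphism: the paper keeps the full four-term snake sequence, splits it into three short exact sequences with auxiliary terms $E_n$, $A_n=\mathrm{Im}\bigl(Z[\omega_n]\to K/\omega_n K\bigr)$, $B_n$, and disposes of the boundary contribution by the (unproved, ``easily shown'') claims $\varprojlim A_n=0$ and $\varprojlim K/\omega_nK\simeq\varprojlim B_n$; you instead prove outright that for $n\gg 0$ one has $\omega_nK=0$ (since $K$ is killed by a power of $(p,T)$ and $\omega_n\in(p,T)^{n+1}$) and that $\mathrm{Im}(\delta_n)=K\cap\omega_nX$ vanishes eventually, by Krull's intersection theorem applied to $X$ together with stabilization of the decreasing chain of subgroups of the finite group $K$. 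This buys a cleaner argument: the snake sequence collapses to the short exact sequence $0\to K\to X/\omega_nX\to Z/\omega_nZ\to 0$ for large $n$, so no bookkeeping with $A_n$, $B_n$ is needed, and your Krull-type argument in effect supplies the justification the paper omits; the paper's version, on the other hand, works uniformly in $n$ without isolating a threshold and leans on compactness to pass the three short exact sequences through the limit.
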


\medskip

\begin{proof} By the snake lemma, we get $Z[\omega_{n}] \rightarrow \frac{K}{\omega_{n}K} \rightarrow \frac{X}{\omega_{n}X} \rightarrow \frac{Z}{\omega_{n}Z} \rightarrow 0$. Define $E_{n}, A_{n}, B_{n}$ as $$E_{n}=\text{Ker}(Z[\omega_{n}] \rightarrow \frac{K}{\omega_{n}K}), A_{n}=\text{Im}(Z[\omega_{n}] \rightarrow \frac{K}{\omega_{n}K}), B_{n}=\text{Ker}(\frac{X}{\omega_{n}X} \rightarrow \frac{Z}{\omega_{n}Z}).$$ Then we get the following three exact sequences:
\begin{align}
0 \rightarrow E_{n} \rightarrow Z[\omega_{n}] \rightarrow A_{n} \rightarrow 0  \\
0 \rightarrow A_{n} \rightarrow \frac{K}{\omega_{n}K} \rightarrow B_{n} \rightarrow 0 \\
0 \rightarrow B_{n} \rightarrow \frac{X}{\omega_{n}X} \rightarrow \frac{Z}{\omega_{n}Z} \rightarrow 0
\end{align}
Since all the terms in sequence $(8), (9), (10)$ are compact, the sequences $(8), (9), (10)$ remain exact after taking projective limit. 

We can easily show that $\displaystyle \lim_{\substack{\longleftarrow \\n}}A_{n}=0$ and $\displaystyle \lim_{\substack{\longleftarrow \\n}}\frac{K}{\omega_{n}K} \simeq \lim_{\substack{\longleftarrow \\n}} B_{n}$. Since $B_{n}$ is finite, ($\because K$ is finite), the sequence $(10)$ remains exact after taking $p^{\infty}$-torsion parts. By taking projective limit, we get $$\displaystyle 0 \rightarrow \lim_{\substack{\longleftarrow \\n}}\frac{K}{\omega_{n}K} \simeq \lim_{\substack{\longleftarrow \\n}} B_{n} \rightarrow \lim_{\substack{\longleftarrow \\n}}\frac{X}{\omega_{n}X}[p^{\infty}]  \rightarrow \lim_{\substack{\longleftarrow \\n}}\frac{Z}{\omega_{n}Z}[p^{\infty}] \rightarrow 0$$ which gives the assertion.
\end{proof}

\medskip

\begin{lemma}\label{lemma A.3.14}
Let $0 \rightarrow Z \rightarrow S \rightarrow C \rightarrow 0 $ be a short exact sequence of finitely generated $\Lambda$ modules where $C$ is finite. Then we have an exact sequence $0 \rightarrow \mathfrak{G}(Z) \rightarrow \mathfrak{G}(S) \rightarrow \mathfrak{G}(C)\simeq C.$
\end{lemma}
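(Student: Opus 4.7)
The plan is to follow exactly the template of Lemma A.3.13, applying the snake lemma to multiplication by $\omega_n$ on the given short exact sequence, splitting the resulting six-term exact sequence, taking $p^\infty$-torsion, and then taking projective limits. Applying the snake lemma to
\[
\begin{array}{ccccccccc}
0 & \to & Z & \to & S & \to & C & \to & 0 \\
& & \downarrow \omega_n & & \downarrow \omega_n & & \downarrow \omega_n & & \\
0 & \to & Z & \to & S & \to & C & \to & 0
\end{array}
\]
yields $0 \to Z[\omega_n] \to S[\omega_n] \to C[\omega_n] \xrightarrow{\delta_n} Z/\omega_n Z \to S/\omega_n S \to C/\omega_n C \to 0$. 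Setting $A_n := \operatorname{Im}(\delta_n)$ and $B_n := \ker(S/\omega_n S \to C/\omega_n C)$, I would extract the two short exact sequences
\[
0 \to A_n \to Z/\omega_n Z \to B_n \to 0, \qquad 0 \to B_n \to S/\omega_n S \to C/\omega_n C \to 0.
\]

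Since $C$ is finite, both $A_n$ (a quotient of $C[\omega_n] \subseteq C$) and $C/\omega_n C$ are finite. By Lemma 2.1.4(2), taking the $p^\infty$-torsion keeps both sequences exact:
\[
0 \to A_n \to (Z/\omega_n Z)[p^\infty] \to B_n[p^\infty] \to 0, \qquad 0 \to B_n[p^\infty] \to (S/\omega_n S)[p^\infty] \to C/\omega_n C \to 0.
\]
Now I would take $\varprojlim_n$ of both sequences. All of the inverse systems consist of compact groups (with $\{A_n\}$ and $\{C/\omega_n C\}$ even finite), so they satisfy Mittag-Leffler and $\varprojlim$ remains exact, giving
\[
0 \to \varprojlim A_n \to \mathfrak{G}(Z) \to \varprojlim B_n[p^\infty] \to 0, \qquad 0 \to \varprojlim B_n[p^\infty] \to \mathfrak{G}(S) \to \mathfrak{G}(C) \simeq C,
\]
where the last identification uses Lemma A.3.12(1). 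Splicing these two sequences produces the claimed complex $0 \to \mathfrak{G}(Z) \to \mathfrak{G}(S) \to \mathfrak{G}(C) \simeq C$, once one knows that $\varprojlim A_n = 0$.

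The key technical point, and main obstacle, is verifying $\varprojlim A_n = 0$; this is the analog of the corresponding unproved claim in Lemma A.3.13. Since $C$ is a finite $\Lambda$-module, $C$ is annihilated by some power of the maximal ideal $(p,T)$, and because $\omega_n \in (p,T)^{n+1}$ there is an $N$ with $\omega_n \cdot C = 0$ for all $n \geq N$. For such $n$, the connecting map is computed by choosing any set-theoretic section $\sigma \colon C \to S$ and setting $\delta_n(c) = \omega_n \sigma(c) \bmod \omega_n Z$. The transition $A_{n+1} \to A_n$ is then induced by the natural projection $Z/\omega_{n+1} Z \to Z/\omega_n Z$, which sends $\omega_{n+1}\sigma(c) = \omega_{n+1,n}\cdot \omega_n \sigma(c)$ to zero in $Z/\omega_n Z$. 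Thus the transition maps $A_{n+1} \to A_n$ vanish for all $n \geq N$, so $\varprojlim A_n = 0$, completing the proof.
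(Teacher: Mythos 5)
Your argument follows the paper's proof step for step: snake lemma, splitting into two short exact sequences, Lemma 2.1.4(2) to preserve exactness after taking $p^{\infty}$-torsion, and projective limits of systems of finite groups. You also correctly isolate the one point the paper dismisses as easy, namely $\varprojlim_n A_n=0$ for the images $A_n$ of the connecting maps. But your justification of that point does not work as written. You claim that for $n\ge N$ the one-step transition $A_{n+1}\to A_n$ already vanishes because $\omega_{n+1}\sigma(c)=\omega_{n+1,n}\cdot\omega_n\sigma(c)$ ``is zero in $Z/\omega_nZ$''. What you actually know is that $\omega_n\sigma(c)\in Z$, so $\omega_{n+1}\sigma(c)\in\omega_{n+1,n}Z$; since $\omega_{n+1,n}$ is coprime to $\omega_n$, this does not place it in $\omega_nZ$. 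Concretely, take $S=\Lambda$, $Z=(p^2,T)$, $C=\Lambda/(p^2,T)\simeq\mathbb{Z}/p^2$ (so $T$ acts as $0$ and every $\omega_n$ kills $C$). Then $S[\omega_n]=0$, so $\delta_n$ is injective and $A_n\simeq\mathbb{Z}/p^2$ for all $n$; the compatibility $\mathrm{pr}\circ\delta_{n+1}=\delta_n\circ\omega_{n+1,n}$ shows that the transition $A_{n+1}\to A_n$ is induced by multiplication by $\omega_{n+1,n}$ on $C$, which acts as $\omega_{n+1,n}(0)=p\neq 0$. So the one-step transition maps need not die, and this step of your proof fails.

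The conclusion $\varprojlim_n A_n=0$ is nevertheless true, and the repair is short. The projection $Z/\omega_mZ\to Z/\omega_nZ$ intertwines $\delta_m$ with $\delta_n\circ\omega_{m,n}$, where $\omega_{m,n}=\omega_m/\omega_n=\prod_{k=n}^{m-1}\omega_{k+1,k}$; each $\omega_{k+1,k}$ lies in the maximal ideal $\mathfrak{m}=(p,T)$, so $\omega_{m,n}\in\mathfrak{m}^{m-n}$. Since $C$ is a finite $\Lambda$-module, $\mathfrak{m}^{j}C=0$ for some $j$, hence the image of $A_m\to A_n$ is $\delta_n\left(\omega_{m,n}C[\omega_m]\right)=0$ as soon as $m\ge n+j$. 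If for every $n$ some transition $A_m\to A_n$ vanishes, every compatible family in the inverse system is zero, so $\varprojlim_n A_n=0$, and the rest of your splicing argument (which is the same as the paper's, with your $A_n,B_n$ playing the roles of the paper's $B_n,E_n$) goes through unchanged; the same remark applies to the analogous unproved claim in Lemma A.3.13.
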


\begin{proof} 
By the snake lemma, we get $C[\omega_{n}] \rightarrow \frac{M}{\omega_{n}M} \rightarrow \frac{S}{\omega_{n}S} \rightarrow \frac{C}{\omega_{n}C} \rightarrow 0$. Let $$A_{n}=\text{Ker}(C[\omega_{n}] \rightarrow \frac{M}{\omega_{n}M}), B_{n}=\text{Im}(C[\omega_{n}] \rightarrow \frac{M}{\omega_{n}M}), E_{n}=\text{Ker}(\frac{S}{\omega_{n}S} \rightarrow \frac{C}{\omega_{n}C}).$$ Then we get the following three short exact sequences:

\begin{align}
0 \rightarrow A_{n} \rightarrow C[\omega_{n}] \rightarrow B_{n} \rightarrow 0 \\
0 \rightarrow B_{n} \rightarrow \frac{M}{\omega_{n}M} \rightarrow E_{n} \rightarrow 0 \\
0 \rightarrow E_{n} \rightarrow \frac{S}{\omega_{n}S} \rightarrow \frac{C}{\omega_{n}C} \rightarrow 0
\end{align}

We can easily show that $\displaystyle \lim_{\substack{\longleftarrow \\n}}B_{n}=0$. Taking $p^{\infty}$-torsion parts from $(12), (13)$ gives the following two short exact sequences:
\begin{align}
0 \rightarrow B_{n} \rightarrow \frac{M}{\omega_{n}M}[p^{\infty}] \rightarrow E_{n}[p^{\infty}] \rightarrow 0 \\
0 \rightarrow E_{n}[p^{\infty}] \rightarrow \frac{S}{\omega_{n}S}[p^{\infty}] \rightarrow \frac{C}{\omega_{n}C}[p^{\infty}]  
\end{align}
Here, $(14)$ is exact due to Lemma \ref{lemma A.1.3}. \\

Since all the terms in $(14), (15)$ are finite, taking projective limit preserves  those two 
short exact sequences. Combining with $\displaystyle \lim_{\substack{\longleftarrow \\n}}B_{n}=0$ gives the assertion.
\end{proof}

Now we can prove our main goal for $\mathfrak{G}$, which is an analogue of Proposition \ref{prop A.2.9}.

\medskip

\begin{prop}\label{prop A.3.15}
Let $X$ be a finitely generated $\Lambda$-module. If we let $$\displaystyle E(X) \simeq \Lambda^{r} 
\oplus \left(\bigoplus_{i=1}^{d} \frac{\Lambda}{g_{i}^{l_{i}}}\right) 
\oplus \left(\bigoplus_{\substack{m=1 \\ e_{1} \cdots e_{f} \geq 2}}^{f} \frac{\Lambda}{\omega_{a_{m}+1, a_{m}}^{e_{m}}} \right)
\oplus \left(\bigoplus_{n=1}^{t} \frac{\Lambda}{\omega_{b_{n}+1, b_{n}}}\right) 
$$ where $r \geq 0$, $g_{1}, \cdots g_{d}$ are prime elements of $\Lambda$ which are coprime to $\omega_{n}$ for all $n$, $d \geq 0$, $l_{1}, \cdots, l_{d} \geq 1$, $f \geq 0$, $e_{1}, \cdots, e_{f} \geq 2$ and $t \geq 0$, then we have a pseudo-isomorphism $$\displaystyle \mathfrak{G}(X) \xrightarrow{\mathfrak{G}(\phi)} \left(\bigoplus_{i=1}^{d} \frac{\Lambda}{g_{i}^{l_{i}}}\right)\oplus \left(\bigoplus_{\substack{m=1 \\ e_{1} \cdots e_{f} \geq 2}}^{f} \frac{\Lambda}{\omega_{a_{m}+1, a_{m}}^{e_{m}-1}} \right).$$ In particular, $\mathfrak{G}(X)$ is a finitely generated $\Lambda$-torsion module.
\end{prop}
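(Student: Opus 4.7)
The plan is to mimic the strategy already used for Proposition A.2.9, replacing the contravariant/covariant roles appropriately and invoking Lemmas A.3.12, A.3.13, and A.3.14 in sequence. First I would invoke the structure theorem for finitely generated $\Lambda$-modules to obtain a four-term exact sequence
\[
0 \to K \to X \xrightarrow{\phi} E(X) \to C \to 0
\]
with $K$ and $C$ finite, and factor it through $Z := \mathrm{Im}(\phi)$ as two short exact sequences $0 \to K \to X \to Z \to 0$ and $0 \to Z \to E(X) \to C \to 0$.

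Next, I would apply Lemma A.3.13 to the first sequence, yielding a short exact sequence
\[
0 \to K \simeq \mathfrak{G}(K) \to \mathfrak{G}(X) \to \mathfrak{G}(Z) \to 0,
\]
so $\mathfrak{G}(X) \twoheadrightarrow \mathfrak{G}(Z)$ has finite kernel. Then I would apply Lemma A.3.14 to the second sequence to get
\[
0 \to \mathfrak{G}(Z) \to \mathfrak{G}(E(X)) \to C,
\]
so $\mathfrak{G}(Z) \hookrightarrow \mathfrak{G}(E(X))$ has finite cokernel. Composing the two maps produces a $\Lambda$-linear map $\mathfrak{G}(\phi): \mathfrak{G}(X) \to \mathfrak{G}(E(X))$ with both kernel and cokernel finite, i.e.\ a pseudo-isomorphism.

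It then remains to identify $\mathfrak{G}(E(X))$ with the claimed target. Since $\mathfrak{G}$ preserves finite direct sums, I would break $E(X)$ into its four types of summands and compute each via Lemma A.3.12: the free part $\Lambda^r$ contributes $0$, each $\Lambda/g_i^{l_i}$ with $g_i$ coprime to every $\omega_n$ contributes $\Lambda/g_i^{l_i}$, each $\Lambda/\omega_{a_m+1, a_m}^{e_m}$ with $e_m \geq 2$ contributes $\Lambda/\omega_{a_m+1, a_m}^{e_m - 1}$, and each $\Lambda/\omega_{b_n+1, b_n}$ contributes $0$. Assembling these gives precisely the displayed target, proving finite generation and $\Lambda$-torsionness as immediate consequences.

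The main obstacle is not conceptual but bookkeeping: one must verify that Lemma A.3.12(1) genuinely applies to the $\Lambda/g_i^{l_i}$ factors, which requires noting that $g_i$ being coprime to every $\omega_n$ forces $\Lambda/(g_i^{l_i}, \omega_n)$ to be finite for all $n$ (so that $\mathfrak{G}$ reduces to the naive inverse limit and recovers the module itself). Everything else is formal diagram-chasing through the three-lemma package, exactly parallel to the $\mathfrak{F}$-case.
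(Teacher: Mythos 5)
Your argument is correct and follows exactly the paper's own route: structure theorem giving $0 \to K \to X \to E(X) \to C \to 0$, factoring through the image $Z$, applying Lemma A.3.13 and Lemma A.3.14 to the two resulting short exact sequences, and computing $\mathfrak{G}(E(X))$ summand-by-summand via Lemma A.3.12. No gaps; the coprimality check you flag for the $\Lambda/g_i^{l_i}$ factors is precisely what Lemma A.3.12(1) already covers.
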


\medskip

\begin{proof} 
By the structure theorem of the finitely generated $\Lambda$-modules, we have an exact sequence $$0 \rightarrow K \rightarrow X \rightarrow E(X) \rightarrow C \rightarrow 0$$ where $K, C$ are finite modules. Let $Z$ be the image of $X \longrightarrow E(X)$. Now Lemma \ref{lemma A.3.12}, Lemma \ref{lemma A.3.13}, Lemma \ref{lemma A.3.14} applied to two short exact sequences $0 \rightarrow K \rightarrow X \rightarrow Z \rightarrow 0$ and $0 \rightarrow Z \rightarrow E(X) \rightarrow C \rightarrow 0$ give the desired assertion.
\end{proof}

Combining Proposition \ref{prop A.2.9} and Proposition \ref{prop A.3.15}, we get the following corollary.\\

\begin{cor}\label{cor A.3.16} For any finitely generated $\Lambda$-module $X$, two $\Lambda$-torsion modules $\mathfrak{F}(X)^{\iota}$ and $\mathfrak{G}(X)$ are pseudo-isomorphic.
\end{cor}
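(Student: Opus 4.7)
The plan is to reduce to the explicit structural descriptions provided by Propositions A.1.6 and A.2.12 (equivalently Propositions A.2.9 and A.3.15 in the appendix numbering used above), and then observe that the two resulting elementary models match after applying the $\iota$-twist. Since both $\mathfrak{F}(X)$ and $\mathfrak{G}(X)$ are already known to be finitely generated torsion $\Lambda$-modules, pseudo-isomorphism is an equivalence relation between them, so it suffices to exhibit a common elementary model.

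First I would record the behaviour of the involution $\iota$ on the relevant prime elements. For a distinguished prime $g$ coprime to every $\omega_n$, the class $\iota(g)$ is again such a prime, and $\left(\Lambda/g^{l}\right)^{\iota} \simeq \Lambda/\iota(g)^{l}$ by the very definition of the $\iota$-twist. More importantly, a short computation using $\iota(1+T)=(1+T)^{-1}$ gives $\iota(\omega_n) = -\omega_n \cdot (1+T)^{-p^n}$, and therefore
\[
\iota(\omega_{n+1,n}) \;=\; \omega_{n+1,n}\cdot (1+T)^{-(p^{n+1}-p^n)},
\]
so $\iota(\omega_{n+1,n})$ and $\omega_{n+1,n}$ generate the same principal ideal. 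Consequently $\Lambda/\iota(\omega_{n+1,n})^{e} \simeq \Lambda/\omega_{n+1,n}^{e}$ as $\Lambda$-modules, and twisting by $\iota$ exchanges $\Lambda/\iota(g)^{l}$ with $\Lambda/g^{l}$ (using $\iota^2=\mathrm{id}$).

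Next I would invoke the two structural propositions. Write
\[
E(X) \simeq \Lambda^{r} \oplus \left(\bigoplus_{i=1}^{d} \tfrac{\Lambda}{g_i^{l_i}}\right) \oplus \left(\bigoplus_{m=1}^{f} \tfrac{\Lambda}{\omega_{a_m+1,a_m}^{e_m}}\right) \oplus \left(\bigoplus_{n=1}^{t} \tfrac{\Lambda}{\omega_{b_n+1,b_n}}\right).
\]
By Proposition A.2.9, $\mathfrak{F}(X)$ is pseudo-isomorphic to
\[
\left(\bigoplus_{i=1}^{d} \tfrac{\Lambda}{\iota(g_i)^{l_i}}\right) \oplus \left(\bigoplus_{e_m\ge 2} \tfrac{\Lambda}{\iota(\omega_{a_m+1,a_m})^{e_m-1}}\right),
\]
and by Proposition A.3.15, $\mathfrak{G}(X)$ is pseudo-isomorphic to
\[
\left(\bigoplus_{i=1}^{d} \tfrac{\Lambda}{g_i^{l_i}}\right) \oplus \left(\bigoplus_{e_m\ge 2} \tfrac{\Lambda}{\omega_{a_m+1,a_m}^{e_m-1}}\right).
\]
Applying the $\iota$-twist to the first display and using the ideal identifications from the first paragraph, $\mathfrak{F}(X)^{\iota}$ becomes pseudo-isomorphic to exactly the same module that $\mathfrak{G}(X)$ is pseudo-isomorphic to.

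Finally I would conclude by the standard fact that pseudo-isomorphism is an equivalence relation on finitely generated torsion $\Lambda$-modules: the existence of a common elementary model forces $\mathfrak{F}(X)^{\iota}$ and $\mathfrak{G}(X)$ to be pseudo-isomorphic. The only step with any subtlety is the unit computation $\iota(\omega_{n+1,n})=\omega_{n+1,n}\cdot(\text{unit})$, which I expect to be the main technical obstacle; everything else is just bookkeeping between Proposition A.2.9 and Proposition A.3.15, together with the behaviour of the $\iota$-twist on cyclic modules $\Lambda/J$.
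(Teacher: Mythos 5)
Your proposal is correct and follows essentially the same route as the paper, which simply combines Proposition A.2.9 and Proposition A.3.15; you have just made explicit the bookkeeping the paper leaves implicit, namely that $(\Lambda/\iota(h)^{e})^{\iota}\simeq \Lambda/h^{e}$, that $\iota(\omega_{n+1,n})$ is an associate of $\omega_{n+1,n}$, and that pseudo-isomorphism is an equivalence relation on finitely generated torsion $\Lambda$-modules. No gaps.
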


\bibliography{reference}

\begin{thebibliography}{NSWS00}

\bibitem[CG96]{coates1996kummer}
John Coates and Ralph Greenberg.
\newblock Kummer theory for abelian varieties over local fields.
\newblock {\em Inventiones Mathematicae}, 124(1-3):129--174, 1996.

\bibitem[DDT95]{darmon1995fermat}
Henri Darmon, Fred Diamond, and Richard Taylor.
\newblock Fermat’s last theorem.
\newblock {\em Current Developments in Mathematics}, 1(1):157, 1995.

\bibitem[Fla90]{flach1990generalisation}
Matthias Flach.
\newblock A generalisation of the {Cassels}-{Tate} pairing.
\newblock {\em J. reine angew. Math}, 412:113--127, 1990.

\bibitem[Gre89]{greenberg1989iwasawa}
Ralph Greenberg.
\newblock Iwasawa theory of p-adic representations.
\newblock {\em Algebraic number theory}, pages 97--137, 1989.

\bibitem[Gre99]{greenberg1999iwasawa}
Ralph Greenberg.
\newblock Iwasawa theory for elliptic curves.
\newblock In {\em Arithmetic theory of elliptic curves}, pages 51--144.
  Springer, 1999.

\bibitem[Gre01]{greenberg2001introduction}
Ralph Greenberg.
\newblock Introduction to {Iwasawa} theory for elliptic curves.
\newblock {\em Arithmetic algebraic geometry}, 9:407--464, 2001.

\bibitem[Ima75]{imai1975remark}
Hideo Imai.
\newblock A remark on the rational points of abelian varieties with values in
  cyclotomic $\mathbb{Z}_p$-extensions.
\newblock {\em Proceedings of the Japan Academy}, 51(1):12--16, 1975.

\bibitem[JM15]{jha2015functional}
Somnath Jha and Dipramit Majumdar.
\newblock Functional equation for the {Selmer group} of nearly ordinary {Hida
  deformation} of hilbert modular forms.
\newblock {\em arXiv preprint arXiv:1504.02959}, 2015.

\bibitem[JP14]{jha2014algebraic}
Somnath Jha and Aprameyo Pal.
\newblock Algebraic functional equation for {Hida} family.
\newblock {\em International Journal of Number Theory}, 10(07):1649--1674,
  2014.

\bibitem[Kat04]{kato2004p}
Kazuya Kato.
\newblock p-adic hodge theory and values of zeta functions of modular forms.
\newblock {\em Ast{\'e}risque}, 295:117--290, 2004.

\bibitem[Maz72]{mazur1972rational}
Barry Mazur.
\newblock Rational points of abelian varieties with values in towers of number
  fields.
\newblock {\em Inventiones mathematicae}, 18(3-4):183--266, 1972.

\bibitem[Mil06]{milne2006arithmetic}
James~S Milne.
\newblock {\em Arithmetic duality theorems}.
\newblock Citeseer, 2006.

\bibitem[NSWS00]{neukirch2000cohomology}
J{\"u}rgen Neukirch, Alexander Schmidt, Kay Wingberg, and Alexander Schmidt.
\newblock {\em Cohomology of number fields}, volume 323.
\newblock Wiley Online Library, 2000.

\bibitem[Rub90]{rubin1990main}
Karl Rubin.
\newblock The main conjecture, appendix to s.
\newblock {\em Lang’s Cyclotomic Fields I and II}, 1990.

\end{thebibliography}
\bibliographystyle{alpha}

\end{document}